\documentclass{cm-l}

\usepackage{amssymb}
\usepackage{amsmath}
\usepackage{float}
\usepackage{mathtools}
\usepackage{hyperref} 
\usepackage{rotating}
\usepackage{graphicx}
\usepackage{mathrsfs}
\DeclarePairedDelimiter\abs{\lvert}{\rvert}
\newtheorem{example}{Example}[section]
\newtheorem{theorem}{Theorem}[section]
\newtheorem{lemma}[theorem]{Lemma}
\newtheorem{definition}[theorem]{Definition}
\newtheorem{proposition}[theorem]{Proposition}
\newtheorem{corollary}[theorem]{Corollary}
\theoremstyle{remark}
\newtheorem{remark}[theorem]{Remark}

\usepackage[cmtip,all]{xy}
\usepackage{tikz-cd}

\theoremstyle{definition}

\theoremstyle{remark}

\numberwithin{equation}{section}

\begin{document}

% \title[short text for running head]{full title}
\title[LEFT-SYMMETRIC STRUCTURES ON $\mathfrak{osp}(1|2)$]{ON THE NON-EXISTENCE OF A LEFT-SYMMETRIC STRUCTURE ON $\mathfrak{osp}(1|2)$ IN ANY CHARACTERISTIC}

%    Only \author and \address are required; other information is
%    optional.  Remove any unused author tags.

%    author one information
% \author[short version for running head]{name for top of paper}
\author[J. Loza]{Josemaria Loza}
\address{Dubai, United Arab Emirates}
\curraddr{New York University Abu Dhabi
PO Box 129188, Saadiyat Island, Abu Dhabi, United Arab Emirates}
\email{jl13583@nyu.edu}
\thanks{}

%    author two information
% \author[S. Bouarroudj]{Sofiane Bouarroudj}
% \address{New York University Abu Dhabi
% PO Box 129188, Saadiyat Island, Abu Dhabi, United Arab Emirates}
% \curraddr{New York University Abu Dhabi
% PO Box 129188, Saadiyat Island, Abu Dhabi, United Arab Emirates}
% \email{sofiane.bouarroudj@nyu.edu}
% \thanks{}

%    \subjclass is required.
\subjclass[2020]{Primary 17B05;  Secondary 17D25, 17B60. }

\keywords{Lie superalgebras; left-symmetric superalgebras; computational algebra.}

\date{3 August 2026}

\dedicatory{}

%    Abstract is required.
\begin{abstract} Burde proved that the Lie algebra $\mathfrak{sl}_2(\mathbb{K})$, over a field $\mathbb{K}$ of characteristic $p > 2$, admits compatible left-symmetric algebra structures if and only if $p = 3$. Since the even part of the Lie superalgebra $\mathfrak{osp}(1|2)$ is isomorphic to $\mathfrak{sl}_2(\mathbb{K})$, we ask whether Burde's structures can be extended to compatible left-symmetric superalgebra structures on $\mathfrak{osp}(1|2)$. We verify Burde's left-symmetric structures on $\mathfrak{sl}_2(\mathbb{K})$, and then test their possible extensions to $\mathfrak{osp}(1|2)$. We show that no such extension exists in characteristic 3. Together with the obstruction coming from the even part for characteristic $0$ and $p > 3$, this gives non-existence for a field $\mathbb{K}$ of characteristic different from $2$.
\end{abstract}

\maketitle

%    Text of article.

\section{Introduction} \label{Introduction}
A left-symmetric algebra (LSA), also known as a pre-Lie algebra, is a non-associative algebra over a vector space $\mathcal{A}$ such that its associator
\[
    (a, b, c) = (ab)c - a(bc), \quad \forall a, b, c \in \mathcal{A}
\]

\noindent
is symmetric in the first two arguments. Every LSA naturally gives rise to a Lie algebra through the commutator $[a, b] \coloneqq ab-ba$, see Definition \ref{lsa-def}. A fundamental problem in the study of LSAs and their superization, left-symmetric superalgebras (LSSAs), is determining which Lie (super)algebras are compatible with left-symmetric structures. It is known that a Lie algebra $\mathfrak{g}$ over a field $\mathbb{K}$ of characteristic $0$  with $[\mathfrak{g}, \mathfrak{g}]=\mathfrak{g}$ does not admit any left-symmetric structures \cite{helmstetter}. In particular, semisimple Lie algebras over a field of characteristic $0$ do not admit LSAs. This is no longer true for Lie superalgebras. See \cite{dimitrov-zhang} for more information. Nevertheless, different phenomena arise over fields of positive characteristic. For instance, Burde proved in \cite{burde1} that the simple Lie algebra $\mathfrak{sl}_2(\mathbb{K})$ over a field $\mathbb{K}$ of characteristic $p > 2$ admits non-trivial left-symmetric structures if and only if $p = 3$. Specifically, when $\mathbb{K}$ is of characteristic $3$, Burde proved that there exist four non-isomorphic structures compatible with $\mathfrak{sl}_2(\mathbb{K})$.

Several authors have recently studied the existence of LSSAs on Lie superalgebras. Dimitrov and Zhang in \cite{dimitrov-zhang} studied the existence and classification of LSSAs on special linear Lie superalgebras $\mathfrak{sl}(m | n)$ with $m \neq n$. Specifically, they found the following: (i) a complete classification of the left-symmetric superalgebras on $\mathfrak{sl}(2|1)$, (ii) $\mathfrak{sl}(m|1)$ does not admit left-symmetric superalgebras for $m \geq 3$, and (iii) $\mathfrak{sl}(m+1 |m)$ admits a left-symmetric superalgebra for every $m \geq 1$. Lastly, they conjecture that $\mathfrak{sl}(m|n)$ admits left-symmetric superalgebras if and only if $m = n +1$. 

Similarly, Bouarroudj and Radu in \cite{bouarroudj-radu} investigated all four-dimensional Lie superalgebras that can be obtained as Lagrangian extensions, and the LSSAs compatible with these.

The orthosymplectic Lie superalgebra $\mathfrak{osp}(1|2)$ is a classical Lie superalgebra. Its even part $\mathfrak{osp}(1|2)_{\bar0}$ is isomorphic to $\mathfrak{sl}_2(\mathbb{K})$, while its odd part $\mathfrak{osp}(1|2)_{\bar1}$ is the standard irreducible 2-dimensional $\mathfrak{sl}_2(\mathbb{K})$-module. That is,
\[
    \mathfrak{osp}(1|2) = \mathfrak{sl}_2(\mathbb{K}) \oplus \mathfrak{osp}(1|2)_{\bar1}.
\]
\noindent
Given this relationship, one may ask whether Burde's LSAs on $\mathfrak{sl}_2(\mathbb{K})$ can be extended to compatible LSSAs on $\mathfrak{osp}(1|2)$. This paper explores this question by adapting Burde's methods through an algebraic formulation. For a geometric interpretation of similar structures, see \cite{bouarroudj-radu}. We do so specifically by first introducing coefficient structures for the different relations among generators in the basis of $\mathfrak{osp}(1|2)$. Using these structures, we test for such LSSAs through a two-stage process: 
\begin{itemize}
    \setlength{\itemsep}{0pt}   % Space between items
    \setlength{\parskip}{0pt}   % Space between paragraphs in items
    \setlength{\topsep}{0pt}
    \item [(i)] Stage 1: Recover the basis relations in $\mathfrak{osp}(1|2)$ using the supercommutator associated with the left multiplication, defined in \cite{burde1};
    \item [(ii)] Stage 2: Satisfy the $\mathbb{Z}/2\mathbb{Z}$-graded left-symmetric identity through the associator.
\end{itemize} 
\noindent
Stage 2, in particular, entails solving multivariate systems of nonlinear polynomial equations using two approaches central to algebraic geometry: 1) Gröbner bases, and 2) polynomial factorization. We find that Stage 1 is readily satisfied. Nevertheless, the resulting polynomial systems in Stage 2 are inconsistent. That is, the graded left-symmetric identity is not satisfied. The Gröbner basis computation certifies this inconsistency by producing $G = \{1\}$, and the factorization analysis provides a complementary check. Given that both approaches in Stage $2$ fail to find a solution to the polynomial system of equations, we conclude that the Lie superalgebra $\mathfrak{osp}(1|2)$ does not admit LSSAs over a field $\mathbb{K}$ of characteristic different from 2.

The paper begins in Section \ref{Preliminaries} with definitions and key properties of Lie algebras, Lie superalgebras, left-symmetric structures, and Gröbner bases. We also formalize notation and recall the definition of the Lie superalgebra $\mathfrak{osp}(1|2)$ as a superalgebra preserving a non-degenerate even supersymmetric bilinear form on the superspace $\mathbb{K}^{1|2}$ \cite{scheunert}. Section \ref{LSS on sl2(k)} outlines a strategy for constructing LSSAs on $\mathfrak{osp}(1|2)$ by adapting Burde's method to the superized setting. We verify Burde's LSAs on $\mathfrak{sl}_{2}(\mathbb{K})$ and explicitly represent them in terms of the $\langle E, F, H \rangle$ basis. Lastly, in Section \ref{Superization LSS} we test possible extensions of Burde's structures on $\mathfrak{osp}(1|2)$ by attempting Stages 1 and 2. Further exploration could aim to identify the structural reason behind this non-existence using other mathematical frameworks, namely, a superized cohomological approach. 

Throughout this paper, we assume $\mathrm{char}(\mathbb{K})$ is not equal to $2$. Lie superalgebras in characteristic $2$ are exceptional for two reasons. First, the concept of parity and the distinction between symmetric and skew-symmetric vanish because $-1 = 1$. Second, the Lie algebra $\mathfrak{sl}_{2}(\mathbb{K})$ is not simple in characteristic $2$, as $H = \mathrm{diag}(1, -1) = \mathrm{diag}(1,1)$ produces a non-trivial center. The case of Lie superalgebras in characteristic $2$ has been treated separately by Bouarroudj et al. in \cite{BENAYADI2025108086}. The accompanying Python code for Sections \ref{LSS on sl2(k)} and \ref{Superization LSS} can be found in the GitHub repository \href{https://github.com/jl13583/capstone_josemaria}{capstone\textunderscore josemaria}. 

\section[Preliminaries]{Preliminaries}\label{Preliminaries}

The purpose of this section is to recall the main definitions and theorems relevant to this paper.

\subsection{Background on Lie Superalgebras.} \label{Lie superalgebras}

Let $V=V_{\bar 0}\oplus V_{\bar 1}$ be a superspace defined over  ${\mathbb K}$. The parity of a homogeneous element $v\in V_{\bar{i}}$ is denoted by $|v| \coloneqq \bar{i}$. The element $v$ is called \textit{even} if $v\in V_{\bar 0}$ and \textit{odd} if $v\in V_{\bar 1}$. The {\it superdimension} of $V$ is $\mathrm{sdim}(V)=a+b\epsilon$, where $\epsilon^2=1$, and $a=\mathrm{dim}(V_{\bar 0})$, $b=\mathrm{dim}(V_{\bar 1})$. Usually, $\mathrm{sdim}(V)$ is abbreviated as $a \mid b$. 

We may identify \( V \cong \mathbb{K}^{m|n} \), the superspace of dimension \( m \mid n \) over a field \( \mathbb{K} \).
\begin{definition}[Superalgebra]
    \normalfont
    A superalgebra is a vector superspace $V=V_{\bar0} \oplus V_{\bar1}$ endowed with a bilinear multiplication $\ \cdot \ $ that satisfies $V_{i} \cdot V_{j} \subseteq V_{i + j \bmod 2}$, $i, j \in \mathbb{Z}/2\mathbb{Z}$.  
\end{definition}

\begin{definition}[Lie superalgebra] \label{def-lie-super}
    \normalfont
    A \textbf{Lie superalgebra} is a $\mathfrak{g} = \mathfrak{g}_{\bar0} \oplus \mathfrak{g}_{\bar1}$ superalgebra over a field $\mathbb{K}$ of characteristic $p \not= 2$ equipped with a bilinear operation $[\cdot \mathbin{,} \cdot]$, the Lie superbracket
    \[
        [\cdot \mathbin{,} \cdot] \colon \mathfrak{g} \times \mathfrak{g} \to \mathfrak{g} ,\ (a,b) \mapsto [a,b]
    \]

    \noindent satisfying the following properties (for any homogeneous elements $a, b, c \in \mathfrak{g}$):

    \begin{enumerate}
        \setlength{\itemsep}{0pt}   % Space between items
        \setlength{\parskip}{0pt}   % Space between paragraphs in items
        \setlength{\topsep}{0pt}
        \item [(i)] Skew-supersymmetry: $[a, b] = -(-1)^{\abs{a}\abs{b}}[b,a]$;
        \item [(ii)] Super Jacobi identity: $[a, [b,c]] = [[a,b], c] + (-1)^{\abs{a}\abs{b}}[b, [a, c]]$;
        \item [(iii)] In the case $p=3$ only, one must add the condition $[a,[a,a]]=0,~\forall a\in \mathfrak{g}_{\bar1}$.
    \end{enumerate}
\end{definition}

\begin{remark}
    Note that if $\mathfrak{g} = \mathfrak{g}_{\bar0}$, we then recover the definition of a Lie algebra (see Remark 2.16).
\end{remark}

The concepts of \textit{subalgebras} and \textit{ideals} are understood in a similar way to Lie algebras with $\mathbb{Z}/2\mathbb{Z}$-grading. However, in the super setting, ideals can be homogeneous or not.   

\begin{example}
    \normalfont
    Let $\mathcal{A}=\mathcal{A}_{\bar0} \oplus \mathcal{A}_{\bar1}$ be an associative superalgebra. $\mathcal{A}$ can be turned into a Lie superalgebra by defining a new multiplication $[\cdot \mathbin{,} \cdot] \colon \mathcal{A} \times \mathcal{A} \to \mathcal{A}$ such that 

    \begin{equation}
        [a, b] = ab-(-1)^{\abs{a}\abs{b}}ba
        \label{supercommutator}
    \end{equation}

    \noindent
    for all homogeneous $a, b \in \mathcal{A}$. 
    \label{example-assoc-superalgebra}
\end{example}

As with Lie algebras, homomorphisms can also be defined. Let $\mathfrak{g}_1= \mathfrak{g}_{1, {\bar0}} \oplus \mathfrak{g}_{1,{\bar1}}$ and $\mathfrak{g}_2= \mathfrak{g}_{2, {\bar0}} \oplus \mathfrak{g}_{2,{\bar1}}$ be two Lie superalgebras. Then, a \textit{homomorphism of Lie superalgebras} is an even linear map $\phi \colon \mathfrak{g}_1 \to \mathfrak{g}_2$ satisfying
\[
    \phi([a,b])=[\phi(a), \phi(b)], \quad \forall a, b \in \mathfrak{g}_1.
\]

By an even map, we mean that $\phi$ preserves parity, $\phi \in \mathrm{Hom}_{\bar0}(\mathfrak{g}_1, \mathfrak{g}_2)$, or, equivalently, that $\phi(\mathfrak{g}_{1,{\bar0}}) \subset \mathfrak{g}_{2,{\bar0}}$ and $\phi(\mathfrak{g}_{1,{\bar1}}) \subset \mathfrak{g}_{2,{\bar1}}$.

\begin{definition}[$\mathfrak{g}$-module]
    \normalfont
    If $\mathfrak{g}$ is a Lie superalgebra, then a $\mathfrak{g}$\textit{-module} is a vector space $V$ together with a bilinear map $\mathfrak{g} \times V \to V, \ (a, v) \mapsto av$ satisfying
    \[
       [a, b]v = a(bv) - (-1)^{\abs{a} \abs{b}}b(av),
    \]

    \noindent
    for all $a, b \in \mathfrak{g}$ and $v \in V$.
\end{definition}

\begin{example}
    \normalfont
    Let $\mathfrak{g}$ be a Lie superalgebra. Then, the space of all endomorphisms of $\mathfrak{g}$, $\mathrm{End}(\mathfrak{g})$, is clearly associative, and is thus a Lie superalgebra by Example \ref{example-assoc-superalgebra}. Now define the \textit{adjoint representation} of $\mathfrak{g}$ by $\mathrm{ad} \colon \mathfrak{g} \to \mathrm{End}(\mathfrak{g})$ such that 
    \[
        \mathrm{ad}_a(b)=[a, b], \quad \forall a, b \in \mathfrak{g}.
    \]

    We notice that $\mathrm{ad}$ is a homomorphism of Lie superalgebras due to the Super Jacobi identity of Definition \ref{def-lie-super}. Moreover, the resulting action $\mathrm{ad}_a$ of $\mathfrak{g}$ onto itself is called the \textit{adjoint action}, similar to the adjoint module of a Lie algebra. 
\end{example}

\begin{remark}
    The even part of $\mathfrak{g}$ is a Lie algebra on its own. This is because, by the $\mathbb{Z}/2\mathbb{Z}$-grading, we have closure under the Lie superbracket; that is, $[\mathfrak{g}_{\bar0}, \mathfrak{g}_{\bar0}] \subset \mathfrak{g}_{\bar0}$. Thus, in $\mathfrak{g}_{\bar0}$, the superbracket reduces to the Lie bracket. This is not true for the odd part $\mathfrak{g}_{\bar1}$ as closure is not satisfied: $[\mathfrak{g}_{\bar1}, \mathfrak{g}_{\bar1}] \subset \mathfrak{g}_{\bar0}$. Under the adjoint action, $\mathfrak{g}_{\bar1}$ is actually a $\mathfrak{g}_{\bar0}$-module. For more details, see \cite{cheng}. 
\end{remark}

\begin{definition}[Superderivations]
    \normalfont
    Let $\mathcal{A}=\mathcal{A}_{\bar0} \oplus \mathcal{A}_{\bar1}$ be a superalgebra. Then a linear endomorphism $D \in \mathrm{End}(\mathcal{A})_s$ such that $s \in \mathbb{Z}/2\mathbb{Z}$ is called a superderivation of $\mathcal{A}$ of degree $s$ if 
    \[
        D(ab)=D(a)b +(-1)^{s \abs{a}}aD(b), \quad \forall a, b \in \mathcal{A}.
    \]

    Let $\mathrm{Der}(\mathcal{A})_s$ denote the space of derivations on $\mathcal{A}$ of degree $s$. If $s = \bar0$, the subspace $\mathrm{Der}_{\bar0}(\mathcal{A})$ consists of all even derivations, meaning that $D(\mathcal{A}_s) \subset \mathcal{A}_s$ for $s \in \mathbb{Z}/2\mathbb{Z}$. Conversely, if $s = \bar1$, the subspace $\mathrm{Der}_{\bar1}(\mathcal{A})$ consists of all odd derivations, implying $D(\mathcal{A}_s) \subset \mathcal{A}_{s+1 \bmod 2}$ for $s \in \mathbb{Z}/2\mathbb{Z}$. Hence, the superspace of superderivations of $\mathcal{A}$, $\mathrm{Der}(\mathcal{A}) = \mathrm{Der}(\mathcal{A})_{\bar0} \oplus \mathrm{Der}(\mathcal{A})_{\bar1}$ is called the \textit{Lie superalgebra of superderivations of $\mathcal{A}$} and is a subalgebra of the Lie superalgebra $\mathrm{End}(\mathcal{A})$ paired with the superbracket operator. For more details on superderivations, see \cite{musson, cheng}.
\end{definition}

\begin{example}
    \normalfont
    Take the adjoint action. If $\mathfrak{g}$ is a Lie superalgebra, then similarly to Example 2.1, through the super Jacobi identity, $\mathrm{ad \ \mathfrak{g} \in \mathrm{Der}(\mathfrak{g})}$. That is, $\mathrm{ad}_a$ is a superderivation, for all $a \in \mathfrak{g}$. These are called the \textit{inner derivations}, and they form an ideal in $\mathrm{Der}(\mathfrak{g})$. 
\end{example}

Similarly to Lie algebras, we can define subspaces, ideals, and related concepts. Let $\mathfrak{g}$ be a Lie superalgebra. If $V, W$ are subspaces of $\mathfrak{g}$, then $[V, W]= \{[v, w] \mid v \in v, \ w \in w\}$ is a subspace of $\mathfrak{g}$ as well. If $\mathfrak{i}$ is a $\mathbb{Z}/2\mathbb{Z}$-graded subspace of $\mathfrak{g}$, then $\mathfrak{i}$ is a graded \textit{ideal} if $[\mathfrak{i}, \mathfrak{g}] \subset \mathfrak{i}$.  

We call $\mathfrak{g}^{(i)}$ the \textit{i}-th derived superalgebra. The \textit{derived series} $\mathfrak{g}^{(n)}$ of $\mathfrak{g}$ is defined by 
\[
    \mathfrak{g}^{(0)} = \mathfrak{g}, \quad \mathfrak{g}^{(n+1)}=[\mathfrak{g}^{(n)}, \mathfrak{g}^{(n)}], \quad n\geq 0.
\]

The \textit{lower central series} $\mathfrak{g}^{[n]}$ of $\mathfrak{g}$ is defined by 
\[
     \mathfrak{g}^{[0]} = \mathfrak{g}, \quad \mathfrak{g}^{[n+1]}=[\mathfrak{g}, \mathfrak{g}^{[n]}], \quad n\geq 0.
\]

The Lie superalgebra $\mathfrak{g} $ is said to be \textit{solvable} if $\mathfrak{g}^{(n)}=0$ for some $n \geq 1$. Likewise, $\mathfrak{g}$ is said to be \textit{nilpotent} if $\mathfrak{g}^{[n]}=0$ for some $n \geq 1$. Lastly, $\mathfrak{g}$ is said to be \textit{abelian} if $[\mathfrak{g}, \mathfrak{g}]=0$. 

A Lie superalgebra $\mathfrak{g}$ is \textit{simple} if it is not abelian and the only $\mathbb{Z}/2\mathbb{Z}$-graded ideals are the trivial ones ($0$ and $\mathfrak{g}$ itself). 

\begin{example}
    \normalfont
    Let $V=V_{\bar0} \oplus V_{\bar1} \cong \mathbb{C}^{m \mid n}$ be a vector superspace with $\mathrm{sdim}=m \mid n$ such that $\mathrm{End}(V)$ is an associative superalgebra. By Example \ref{example-assoc-superalgebra}, equipping $\mathrm{End}(V)$ with the Lie superbracket (in this case called the \textit{supercommutator}) transforms it into a Lie superalgebra called the \textbf{general linear Lie superalgebra} and is denoted by $\mathfrak{gl}(V)$ or, in this case, by $\mathfrak{gl}(m|n)$. By fixing a basis, $\mathfrak{gl}(m|n)$ can be realized as the set of all $(m +n) \times (m +n)$ block matrices of the form 
    \begin{equation}
        X = \begin{pmatrix} A & B \\ C & D\end{pmatrix}
        \label{blockmat},
    \end{equation}

    \noindent
    where $A, B, C,$ and $D$ are matrices respectively of the form $m \times m, \ m \times n, \ n \times m,$ and $n \times n$. The even subalgebra $\mathfrak{gl}(m|n)_{\bar{0}}$ consists of all matrices in the form of Eq. \eqref{blockmat} such that $B= C = O$, while the odd subalgebra $\mathfrak{gl}(m|n)_{\bar{1}}$ consists of all matrices of the form in Eq. \eqref{blockmat} with $A= D = O$. 

    Define the \textit{supertrace} of $X$ by 
    \[
        \mathrm{str}(X) = \mathrm{tr}(A) - \mathrm{tr}(D),
    \]
    \noindent 
    where $\mathrm{tr}$ denotes the usual  trace of a square matrix. 

    One can check that for any $X, Y \in \mathfrak{gl}(m|n)$, we have $\mathrm{str([X, Y])=0}$ due to the cyclic property of the supertrace operator. See \cite{cheng, musson} for more details.
    
    The subalgebra of $\mathfrak{gl}(m|n)$ consisting of all matrices in the form of Eq. \eqref{blockmat} with supertrace zero is called the \textbf{special linear Lie superalgebra} and is denoted by $\mathfrak{sl}(m|n)$. By using the cyclic property of the supertrace and the fact that $\mathfrak{sl}(m|n)$ is spanned by commutators $[X, Y] \in \mathfrak{gl}(m|n)$, we can show $[\mathfrak{gl}(m|n), \mathfrak{gl}(m|n)] = \mathfrak{sl}(m|n)$. 
\end{example}

Now, we will briefly define bilinear forms, useful for understanding the subsequent definition of the Lie superalgebra $\mathfrak{osp}(1|2)$. For more details, see \cite{reich}.

\begin{definition}[Bilinear form]
    \normalfont
    Let $V$ be a vector space over a field $\mathbb{K}$. A \textit{bilinear form} is a bilinear map $\mathscr{B} \colon V \times V \to \mathbb{K}$ such that for all $u, v, w \in V$ and $\alpha, \beta \in \mathbb{K}$, the following are satisfied

    \begin{enumerate}
        \setlength{\itemsep}{0pt}   % Space between items
        \setlength{\parskip}{0pt}   % Space between paragraphs in items
        \setlength{\topsep}{0pt}
        \item [(i)] $\mathscr{B}(\alpha u +\beta v, w)= \alpha \mathscr{B}(u, w) + \beta \mathscr{B}(v, w)$,
        \item [(ii)] $\mathscr{B}(u, \alpha v +\beta w)= \alpha \mathscr{B}(u, w) + \beta \mathscr{B}(v, w)$.
    \end{enumerate}

    Let $\mathscr{B}$ be a bilinear form. Then, $\mathscr{B}$ is said to be \textit{symmetric} if $\mathscr{B}(u, v)=\mathscr{B}(v, u)$ for all $u, v \in V$. $\mathscr{B}$ is said to be \textit{antisymmetric} if $\mathscr{B}(u, v)=-\mathscr{B}(v,u)$ for all $u, v \in V$. Moreover, $\mathscr{B}$ is called \textit{alternating} if $\mathscr{B}(u, u)=0$ for all $u \in V$. 

    Relevant to this paper, a bilinear form $\mathscr{B}$ is called \textit{nondegenerate} if for every $v \in V$ with $v \neq 0$, there exists $u \in V$ such that 
    \[
        \mathscr{B}(u, v) \not= 0. 
    \]
\end{definition}\label{bilinear-form}

    The definition of bilinear forms can be easily extended to Lie superalgebras by incorporating the $\mathbb{Z}/2\mathbb{Z}$-grading. If we have that $V=V_{\bar0} \oplus V_{\bar1}$ is a vector superspace, then $\mathscr{B}$ is a bilinear form if it satisfies conditions (i) and (ii) from Definition \ref{bilinear-form}, for all homogeneous $u, v, w \in V$. A bilinear form $\mathscr{B}$ is called \textit{even} (respectively, \textit{odd}) if $\mathscr{B}(V_i, V_j)=0$ unless $i+j= \bar{0}$ (respectively, $i+j= \bar{1}$). 

    A bilinear form $\mathscr{B}$ over a vector superspace $V$ is called \textit{supersymmetric} if for all homogeneous $u, v \in V$
    \[
        \mathscr{B}(u, v)=(-1)^{\abs{u} \abs{v}}(v, u).
    \]

    \noindent
     The bilinear form $\mathscr{B}$ is called \textit{anti-supersymmetric} if for all homogeneous $u, v \in V$ 
     \[
        \mathscr{B}(u, v)=-(-1)^{\abs{u} \abs{v}}(v, u).
     \] 
     The concepts of alternating and nondegeneracy remain the same as in vector spaces. 

\begin{definition}[Orthosymplectic Lie superalgebra]
    \normalfont
    Let $\mathscr{B}$ be a nondegenerate, even bilinear form on a vector superspace $V$ that is either supersymmetric or anti-supersymmetric. Then, $\mathfrak{osp}(V, \mathscr{B})$ denotes the \textit{orthosymplectic Lie superalgebra} preserving the bilinear form $\mathscr{B}$ and is defined by
    \[
       \mathfrak{osp}(V, \mathscr{B}) = \{X \in \mathfrak{gl}(V) \mid \mathscr{B}(X(v), w) + (-1)^{|X||v|} \mathscr{B}(v, X(w)) = 0 , \forall v,w \in V\}.
    \]
    
    We can also define this Lie superalgebra in matrix form. Fix a homogeneous basis for the superspace $V = V_{\bar{0}} \oplus V_{\bar{1}}$ with $\mathrm{dim}(V_{\bar{0}}) = m$ and $\mathrm{dim}(V_{\bar{1}}) = n$ identifying the isomorphism $V \cong \mathbb{K}^{m|n}$. Let
    \[
        J = \begin{pmatrix} G & O \\ O & H \end{pmatrix}
    \]

    \noindent
    be the matrix representing $\mathscr{B}$ in block form such that $G$ is an $m \times m$ matrix and $H$ is an $n \times n$ matrix; additionally, $G$ is symmetric while $H$ is antisymmetric. A block matrix $X \in \mathfrak{gl}(m | n)$ in the form of Eq. \eqref{blockmat} lies in $\mathfrak{osp}(V, \mathscr{B}) = \mathfrak{osp}(m|n)$ if 
    \[
        A^{T}G + GA= B^{T}G - HC = D^{T}H + HD = 0.
    \]

    For more details on  orthosymplectic Lie algebras, see \cite{scheunert, musson}.
    
    The Lie superalgebra $\mathfrak{osp}(1|2) = \mathfrak{osp}(1|2)_{\bar0} \oplus \mathfrak{osp}(1|2)_{\bar1}$ is a classical Lie superalgebra of superdimension $3|2$, defined as the subalgebra of $\mathfrak{gl}(1|2)$ preserving a non-degenerate even supersymmetric bilinear form $\mathscr{B}$ on the superspace $\mathbb{K}^{1|2}$. Concretely, we have that
    \[
        \mathfrak{osp}(1|2) = \{ X \in \mathfrak{gl}(1|2) \mid \mathscr{B}(Xv, w) + (-1)^{|X||v|} \mathscr{B}(v, Xw) = 0, \forall v,w \in \mathbb{K}^{1|2} \}.
    \]

    \noindent
    We can take $\mathscr{B}$ to be represented by the matrix

    \begin{equation}
        J = 
        \begin{pmatrix}
        1 & 0 & 0 \\
        0 & 0 & 1 \\
        0 & -1 & 0
        \end{pmatrix},  
        \label{biformmat}
    \end{equation}

    \noindent
    with respect to a homogeneous basis, where $G=1$ and $H= \begin{pmatrix} 0 & 1 \\ -1 & 0 \end{pmatrix}$. Then
    \[
        \mathfrak{osp}(1|2) = \{ X \in \mathfrak{gl}(1|2) \mid X^\text{sT}J + JX = 0 \},
    \]
    where \(X^\text{sT}\) denotes the supertranspose of $X$. If $X$ is in the form of Eq. \eqref{blockmat}, then 

    \[
        X^{sT} = \begin{pmatrix} A^T & C^T \\ -B^T & D^T \end{pmatrix}.
    \]
    
    Explicitly, the even part $\mathfrak{osp}(1|2)_{\bar0} \cong \mathfrak{sl}_2(\mathbb{K})$, and the odd part $\mathfrak{osp}(1|2)_{\bar1}$ transforms to a 2-dimensional module under the adjoint action of \(\mathfrak{sl}_2(\mathbb{K})\). Thus,
    \begin{equation}
        \mathfrak{osp}(1|2) = \mathfrak{sl}_2(\mathbb{K}) \oplus \mathfrak{osp}(1|2)_{\bar1}.
        \label{osp12}
    \end{equation}

    \noindent
    This isomorphism is exceptional and only true for $\mathfrak{osp}(1|2)$.
    This makes $\mathfrak{osp}(1|2)$ a natural example of a simple Lie superalgebra, with tractable structure and close connections to Lie algebras. In particular, the isomorphism $\mathfrak{osp}(1|2)_{\bar 0} \cong \mathfrak{sl}_2(\mathbb{K})$ implies that the (bosonic) generators span the even part
    \begin{equation}
        \mathfrak{osp}(1|2)_{\bar{0}} = \Biggl\langle E = \begin{pmatrix} 0 & 1 & 0 \\ 0 & 0 & 0 \\ 0 & 0 & 0 \end{pmatrix}, \ F = \begin{pmatrix} 0 & 0 & 0 \\ 1 & 0 & 0 \\ 0 & 0 & 0 \end{pmatrix}, \ H = \begin{pmatrix} 1 & 0 & 0 \\ 0 & -1 & 0 \\ 0 & 0 & 0 \end{pmatrix} \Biggr\rangle.
        \label{evengen}
    \end{equation} 

    \noindent
    These are called the \textit{even generators}, and they satisfy the following commutation relations 
    \begin{equation}
    \label{sl2-relations}
    [H, E]=2E, \ [H,F] = -2F, \ \mathrm{and} \ [E,F] = H. 
    \end{equation}
    \noindent
    This also implies that $\mathrm{dim}(\mathfrak{osp}(1|2)_{\bar{0}}) = \mathrm{dim}(\mathfrak{sl}_2(\mathbb{K}))=3$.
    
    It is important not to confuse the matrix $H$ in Eq. \eqref{biformmat} with that of Eq. \eqref{evengen}.
    
    Similarly, the (fermionic) generators span the odd part $\mathfrak{osp}(1|2)_{\bar1}$
    
    \begin{equation}
        \mathfrak{osp}(1|2)_{\bar{1}} = \Biggl\langle T_{+} = \begin{pmatrix} 0 & 0 & 1 \\ 0 & 0 & 0 \\ 0 & 1 & 0 \end{pmatrix}, \ T_{-} = \begin{pmatrix} 0 & 0 & 0 \\ 0 & 0 & -1 \\ 1 & 0 & 0 \end{pmatrix} \Biggr\rangle. \label{oddgen}
    \end{equation}

    \noindent
    These are called the \textit{odd generators} and, by their parity, they satisfy the following anticommutation relations:
    \begin{equation}
        \{T_{+}, T_{+}\} = 2E, \ \{T_{+}, T_{-}\} = \{T_{-}, T_{+}\} = H, \ \{T_{-}, T_{-}\} = -2F. \label{odd_rela}
    \end{equation}

    Now, considering the odd-even relations:
    \begin{equation}
        [T_{+}, E] = [T_{-}, F] = 0, \ [T_{+}, F] = [T_{-}, H] = T_{-}, \ [T_{+}, H] = -T_{+}, \ [T_{-}, E] = T_{+}. \label{even_odd_rela}
    \end{equation}

    Thus, $\mathfrak{osp}(1|2)$ is said to have a (graded) basis denoted by $\langle E,F,H \mid T_{+}, T_{-}\rangle$.
    
    \begin{remark}
        In physics, a normalization of $\pm \frac{1}{2}$ is applied to the commutation relations with $H$ to assign a spin to odd generators. In mathematics, a normalization of $2$ is assigned as a result of the even subalgebra relations involving $H$. See \cite{mnev}. 
    \end{remark}
\end{definition}

\subsection{Non-Associative Algebras and Left-Symmetric (Super)Algebras.} \label{LSAs}

This section will briefly cover the definitions and properties of non-associative algebras and left-symmetric structures. 

\begin{definition}[Left-symmetric algebras] \label{lsa-def}
    \normalfont
    Let $\mathcal{A}$ be a vector space over a field $\mathbb{K}$ equipped with a bilinear product $(a, b) \mapsto ab$. $\mathcal{A}$ is called a \textit{left-symmetric algebra} (or LSA, for short) if for all $a, b, c \in \mathcal{A}$, the associator 

    \begin{equation}
         (a, b, c) = (ab)c - a(bc), 
    \end{equation}

    \noindent
    is symmetric in $a, b$. That is, 
    
    \begin{equation}
        (a, b, c) = (b, a, c) \iff (ab)c - a(bc) = (ba)c - b(ac). \label{lSID}
    \end{equation}
\end{definition}

\noindent
If $\mathcal{A}$ is an associative algebra, then we have that $(a, b, c) = 0$.  

\begin{remark}
    LSAs are also called \textit{pre-Lie algebras} for reasons seen below.
\end{remark}

\begin{proposition}
    Let $\mathcal{A}$ be a left-symmetric algebra. Then, the commutator

    \begin{equation}
        [a, b]=ab - ba, \quad \forall a, b \in \mathcal{A},
        \label{commutator}
    \end{equation}

    \noindent
    defines a Lie algebra $\mathfrak{g}(\mathcal{A)}$. This Lie algebra is called the \textbf{sub-adjacent Lie algebra of $\mathcal{A}$}; moreover, $\mathcal{A}$ is called a compatible left-symmetric structure on the Lie algebra $\mathfrak{g}(\mathcal{A)}$. 
    \label{prop-subadjacent-Lie-algebra}
\end{proposition}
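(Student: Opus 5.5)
To prove Proposition \ref{prop-subadjacent-Lie-algebra}, I verify the two axioms of a Lie algebra for the bracket \eqref{commutator}, with all elements even, so that Definition \ref{def-lie-super} reduces to the ordinary case. Bilinearity of $[\cdot,\cdot]$ is immediate from bilinearity of the product. Skew-symmetry is also immediate: $[a,b]=ab-ba=-(ba-ab)=-[b,a]$, and $[a,a]=0$ holds trivially. The only substantive point is the Jacobi identity, and that is where the left-symmetric identity \eqref{lSID} is used.

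For Jacobi, I will expand the cyclic sum
\[
J(a,b,c)=[[a,b],c]+[[b,c],a]+[[c,a],b]
\]
into the twelve monomials of degree three in $a,b,c$. Each bracket $[[x,y],z]$ contributes $(xy)z-(yx)z-z(xy)+z(yx)$.

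The key step is to regroup these twelve terms into associators. Writing $(x,y,z)=(xy)z-x(yz)$, one checks directly that
\[
J(a,b,c)=\bigl[(a,b,c)-(b,a,c)\bigr]+\bigl[(b,c,a)-(c,b,a)\bigr]+\bigl[(c,a,b)-(a,c,b)\bigr].
\]
To confirm this, I expand the right-hand side. The terms of the form $(xy)z$ match exactly those of $J$. The terms of the form $-x(yz)+y(xz)$ collect into $-z(xy)+z(yx)$ summed cyclically.

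Each bracketed difference vanishes by \eqref{lSID}, since the associator is symmetric in its first two arguments. Hence $J(a,b,c)=0$.

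The main (modest) obstacle is the bookkeeping in this regrouping, namely making sure the signs of the six terms $x(yz)$ cancel against the outer-product terms of $J$. I would organise this by tabulating the coefficient of each of the six orderings of $(xy)z$ and of $x(yz)$ on both sides.

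If the field has characteristic $3$, no extra condition is needed, because there are no odd elements. The resulting Lie algebra is $\mathfrak{g}(\mathcal{A})$, and $\mathcal{A}$ is by definition a compatible structure on it.
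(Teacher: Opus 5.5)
Your proof is correct and follows essentially the same route as the paper. Both expand the cyclic Jacobi sum into twelve terms and regroup them as the differences $(a,b,c)-(b,a,c)$, $(b,c,a)-(c,b,a)$ and $(c,a,b)-(a,c,b)$, each of which vanishes by left-symmetry; the paper's sum $[a,[b,c]]+[c,[a,b]]+[b,[c,a]]$ is just $-J(a,b,c)$, and your explicit regrouping identity, which checks out term by term, spells out the step the paper leaves implicit.
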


\begin{proof} We can verify the Lie algebra axioms. The product $[a, b]$ is trivially linear in $a$ and $b$. Moreover, we can see that $[a, a]=0$. Lastly, for all $a, b, c \in \mathcal{A}$, we check the Jacobi identity
\begin{align*}
    [a, [b, c]] &=  a(bc - cb) - (bc - cb)a \\
                &= a(bc) - a(cb) - (bc)a + (cb)a.
\end{align*}

\noindent
We obtain similar expressions for $[c, [a, b]]$ and $[b, [c, a]]$ such that
\begin{align*}
        [a, [b, c]] + [c, [a, b]] + [b, [c, a]] &= a(bc) - a(cb) - (bc)a + (cb)a \\ & \quad + c(ab) - c(ba) - (ab)c + (ba)c \\ & \quad + b(ca) - b(ac) - (ca)b + (ac)b.
\end{align*}

Now, knowing $\mathcal{A}$ is a left-symmetric algebra, using the symmetry identity, we see $(a, b, c) = (b, a, c)$, $(b, c, a) = (c, b, a)$, and $(c, a, b) = (a, c, b)$. Thus

    \[
        [a, [b, c]] + [c, [a, b]] + [b, [c, a]] = 0.
  \] 
  
\end{proof}
\noindent
For more details on left-symmetric algebras, refer to \cite{Chen-li, bai}. 

Proposition \ref{prop-subadjacent-Lie-algebra} implies that every LSA has an underlying Lie algebra structure, hence the name ``Pre-Lie algebras.'' Thus, a Lie algebra $\mathfrak{g}$ is said to \textit{admit} a left-symmetric structure if $\mathfrak{g}$ itself is the sub-adjacent Lie algebra of that LSA. Concretely, $\mathfrak{g}$ admits a left-symmetric structure $\mathcal{A}$ if there exists a bilinear product $(\cdot \mathbin{,} \cdot) \colon \mathfrak{g} \times \mathfrak{g} \to \mathfrak{g}$ satisfying the left-symmetric identity in Eq. \eqref{lSID} to ensure that the failure of associativity is symmetric, and Eq. \eqref{commutator} so that the commutator of the product gives the Lie bracket. See \cite{burde1} for more details.

\begin{definition}[Left-symmetric superalgebras]
    \normalfont
    Let $\mathcal{A} = \mathcal{A}_{\bar0} \oplus \mathcal{A}_{\bar1}$ be a superalgebra. Then, $\mathcal{A}$ is a \textit{left-symmetric superalgebra} (or LSSA, for short) if the associator 
    \begin{equation}
        (a, b, c) = (ab)c - a(bc)
    \end{equation}

    \noindent
    of $\mathcal{A}$ is supersymmetric in both $a$ and $b$, that is

    \begin{equation}
        (a, b, c) = (-1)^{\abs{a} \abs{b}}(b, a, c), \quad \forall a, b, c \in \mathcal{A}.
    \end{equation}
\end{definition}

    \noindent
    Let $\mathcal{A}$ be an LSSA. Then, the supercommutator in Eq. \eqref{supercommutator}

    \[
        [a, b] = ab-(-1)^{\abs{a}\abs{b}}ba, \quad \forall a, b \in \mathcal{A},
    \]

    \noindent
    defines a Lie superalgebra $\mathfrak{g}(\mathcal{A})$ called the \textbf{sub-adjacent Lie superalgebra of $\mathcal{A}$}. Additionally, $\mathcal{A}$ is called the \textit{compatible left-symmetric structure on the Lie superalgebra $\mathfrak{g}(\mathcal{A})$}. 

\begin{remark}
    If $\mathcal{A}$ is an LSSA on a Lie superalgebra $\mathfrak{g}$, then $\mathcal{A}_{\bar0}$ is an LSA on $\mathfrak{g}_{\bar0}$. 
\end{remark}

\subsection{Gröbner Bases: A Brief Overview.} \label{Grobner bases}

We now turn to a brief overview of Gröbner bases and related information relevant for the subsequent superization and computations in Section \ref{Superization LSS}. Gröbner bases are particularly useful for determining if a system of polynomial equations, linear or nonlinear, has a solution. We begin with some preliminary definitions and theorems before introducing Gröbner bases.

Let $\mathbb{K}[x_{1},\ldots,x_{n}]$ be a polynomial ring over a field $\mathbb{K}$ (e.g., the rational numbers $\mathbb{Q}$, the real numbers $\mathbb{R}$, or the complex numbers $\mathbb{C}$). For $f_{1},\ldots,f_{k} \in \mathbb{K}[x_{1},\ldots,x_{n}]$, define $V(f_{1},\ldots,f_{k})$ to be the set of solutions to the system of polynomial equations given by 

\begin{equation}
    f_{1}= \cdots = f_{k} = 0.
    \label{system_poly}
\end{equation}

\noindent
Formally,
\[
    V(f_{1},\ldots,f_{k}) = \{(x_1,\ldots,x_n) \in \mathbb{K}^n \mid f_{i}(x_{1},\ldots,x_{n}) = 0, i = 1, 2,\ldots, k\}.
\]
\noindent
$V(f_{1},\ldots,f_{k})$ is the \textit{algebraic variety} defined by the system of equations $\{f_{1},\ldots,f_{k}\}$.

The system of equations in \eqref{system_poly} determines an ideal defined as follows 
\[
    I = \langle f_{1},\ldots,f_{k} \rangle \subset \mathbb{K}[x_{1},\ldots,x_{n}].
\]

\noindent
It is easy to verify this. Solving this system of equations is equivalent to determining whether this ideal admits a common zero in an appropriate extension of $I$, or what Adams and Loustaunau in \cite{adams-grobner-bases} refer to as a ``better'' generating set for $I$ (this is the Gröbner basis for $I$, as we will see later). That is, any solution to the system solves every polynomial in $I$. The following theorem is particularly relevant to the computations in Section \ref{Superization LSS} for determining if $\mathfrak{osp}(1|2)$ admits a left-symmetric superalgebra structure. 

\begin{theorem}[Hilbert's Basis Theorem]
    In the polynomial ring $\mathbb{K}[x_{1},\ldots,x_{n}]$,  we have the following:
    \begin{enumerate}
    \setlength{\itemsep}{0pt}   % Space between items
    \setlength{\parskip}{0pt}   % Space between paragraphs in items
    \setlength{\topsep}{0pt}    
        \item [(i)] If $I$ is any ideal of $\mathbb{K}[x_{1},\ldots,x_{n}]$, then there exist $f_{1},\ldots,f_{k} \in \mathbb{K}[x_{1},\ldots,x_{n}]$ such that $I = \langle f_{1}, \ldots,f_{k} \rangle$.

        \item [(ii)] If $I_{1} \subseteq I_{2} \subseteq \cdots \subseteq I_{n} \subseteq \cdots$ is an ascending chain of ideals of $\mathbb{K}[x_{1},\ldots,x_{n}]$, then there exists $N$ such that $I_{N} = I_{N+1} = I_{N+2} = \cdots$.
     \end{enumerate}
\end{theorem}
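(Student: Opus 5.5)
We prove the two parts in the order (i) then (ii). Part (ii) then follows quickly from (i), and the only real work is (i), which we obtain by induction on the number of variables $n$.

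\textbf{Proof of (i).} The case $n=0$ is trivial, since a field has only the ideals $0$ and $\mathbb{K}=\langle 1\rangle$. Assume every ideal of $R=\mathbb{K}[x_1,\ldots,x_{n-1}]$ is finitely generated, and view $\mathbb{K}[x_1,\ldots,x_n]=R[x_n]$.

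Let $I$ be an ideal of $R[x_n]$. For each $d\ge 0$, let $L_d\subseteq R$ be the set of leading coefficients (coefficients of $x_n^d$) of elements of $I$ of degree exactly $d$ in $x_n$, together with $0$.

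\begin{itemize}
\item Each $L_d$ is an ideal of $R$. Sums and $R$-multiples of degree-$d$ elements of $I$ stay in $I$, and either still have degree $d$ or contribute $0$.
\item The ideals increase: $L_d\subseteq L_{d+1}$, since multiplying by $x_n$ preserves the leading coefficient.
\item The union $L=\bigcup_d L_d$ is an ideal of $R$. By the inductive hypothesis it is finitely generated, and its finitely many generators all lie in some single $L_D$. Hence $L_d=L_D$ for all $d\ge D$, so the chain is stationary from $D$ on.
\item For each $d\le D$, again use the inductive hypothesis to choose finitely many generators of $L_d$. Lift each one to a polynomial $f_{d,j}\in I$ of degree $d$ with that leading coefficient.
\end{itemize}

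Let $J=\langle f_{d,j}\rangle\subseteq I$. We show $I\subseteq J$ by strong induction on $\deg_{x_n} f$ for $f\in I$ (the zero polynomial being trivial). Let $f\in I$ have degree $m$ and leading coefficient $c\in L_m$.

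\begin{itemize}
\item If $m\le D$, write $c=\sum r_j\,\mathrm{lc}(f_{m,j})$ with $r_j\in R$. Then $f-\sum r_j f_{m,j}$ lies in $I$ and has degree $<m$.
\item If $m>D$, then $c\in L_m=L_D$, so write $c=\sum r_j\,\mathrm{lc}(f_{D,j})$. Then $f-\sum r_j x_n^{m-D}f_{D,j}$ lies in $I$ and has degree $<m$.
\end{itemize}

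In both cases the induction hypothesis puts the lower-degree difference in $J$, hence $f\in J$. Therefore $I=J$ is finitely generated.

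\textbf{Proof of (ii).} Given an ascending chain $I_1\subseteq I_2\subseteq\cdots$, the union $U=\bigcup_k I_k$ is an ideal, because any two elements of $U$ already lie in a common $I_k$. By (i), $U=\langle g_1,\ldots,g_s\rangle$. Each $g_i$ lies in some $I_{k_i}$, so with $N=\max_i k_i$ all generators lie in $I_N$. Then $U\subseteq I_N\subseteq I_{N+t}\subseteq U$ for every $t\ge 0$, so the chain stabilizes at $N$.

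The main obstacle is the bookkeeping in the degree reduction for (i). One must treat the degrees $m\le D$ and $m>D$ separately as above, and check that each subtraction stays inside $I$ while strictly lowering the degree in $x_n$.
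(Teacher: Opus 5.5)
Your proof is correct and complete. The ideals $L_d$ are handled properly, and so is their stabilization at $D$, which you obtain from finite generation of the union, so you never need the chain condition for $R$ before proving it. The two-case degree reduction is also right. One cosmetic point: a zero generator of $L_d$ cannot be lifted to an element of degree exactly $d$, so such generators should simply be discarded. Your deduction of (ii) from (i) via the union of the chain is the standard one.

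The paper gives no argument of its own here, only a pointer to Theorem 1.1.1 of Adams--Loustaunau, so there is nothing in the text to match step by step. Measured against the tools the paper develops in Section~\ref{Grobner bases}, the natural alternative is the Gr\"obner-style proof:
\begin{itemize}
\item Dickson's lemma shows every monomial ideal is generated by finitely many monomials, so one can pick $g_1,\ldots,g_k\in I$ whose leading terms generate $\mathrm{Lt}(I)$.
\item The Division Algorithm (Theorem~\ref{theorem-div-algo}) then forces every $f\in I$ to have remainder $0$, giving $I=\langle g_1,\ldots,g_k\rangle$.
\end{itemize}
That route requires a term order but produces a Gr\"obner basis at the same time. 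The paper's two corollaries (a Gr\"obner basis generates $I$, and every nonzero ideal has one) would yield (i) in one line. They do so non-circularly only if existence of Gr\"obner bases is proved via Dickson's lemma rather than via the basis theorem itself.

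Your route needs no term order and proves the stronger fact that $R[x]$ is Noetherian whenever $R$ is. That covers polynomial rings over Noetherian coefficient rings such as $\mathbb{K}[\gamma,\gamma^{-1}]$, over which the paper later forms its ideals.
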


\noindent
\begin{proof}
    See Theorem 1.1.1 (p. 5) in \cite{adams-grobner-bases}.
\end{proof}

By Hilbert's Basis Theorem, every ideal in $\mathbb{K}[x_{1},\ldots,x_{n}]$ is finitely generated. Therefore, even if a system of polynomial equations is infinite, its solution set is determined by finitely many generators of the associated ideal. Now, given an ideal $I \subset \mathbb{K}[x_{1},\ldots,x_{n}]$ and a polynomial $g \in \mathbb{K}[x_{1},\ldots,x_{n}]$, how can we determine if $g \in I$? Namely, if $1 \in I$, then there exist polynomials $a_{1},\ldots,a_{k} \in \mathbb{K}[x_{1},\ldots,x_{n}]$ such that 
\begin{equation}
    \sum\limits_{i=1}^{k}a_{i}f_{i} = 1.
\end{equation}\label{identity-poly}

\noindent
Note that the set of polynomials $\{a_{i}\}_{i=1}^{k}$ is called the \textit{Nullstellensatz certificate} and it is dependent on the polynomials defining the system of equations. Furthermore, Nullstellensatz certificates are not necessarily unique. See \cite{bart-nullstellensatz} for more details on Nullstellensatz certificates. 

Now, let ${\bf{x}} = (x_{1},\ldots,x_{n}) \in \mathbb{K}^{n}$ be a solution to the system. Then, we have
\[
    0 = \sum\limits_{i=1}^{k}a_{i}({\bf{x}})f_{i}({\bf{x}}) = 1,
\]

\noindent
a contradiction. Thus, $1 \in I$ implies that the system of equations does not have a solution. This is precisely outlined in the following theorem. Recall that a field $\mathbb{K}$ is \textit{algebraically closed} if for every nonconstant polynomial $f \in \mathbb{K}[x_{1},\ldots,x_{n}]$ the equation $f = 0$ has a solution in $\mathbb{K}^{n}$ \cite{judson-algebra}.

\begin{theorem}[Hilbert's Weak Nullstellensatz] Let $\mathbb{K}$ be an algebraically closed field and let $I \subset \mathbb{K}[x_{1},\ldots,x_{n}]$ be an ideal. Then,
$V(I) = \varnothing$ if and only if $I = \langle 1 \rangle$. \label{nullstellensatz}
\end{theorem}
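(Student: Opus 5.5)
The statement is Hilbert's Weak Nullstellensatz, so I would prove both directions, with nearly all of the work in the forward one.

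\emph{The direction $I=\langle 1\rangle \Rightarrow V(I)=\varnothing$.} This is the Nullstellensatz-certificate argument recalled just above. If $1=\sum_{i} a_i f_i$ with $f_i\in I$, then evaluating at any putative common zero ${\bf x}$ gives $1=0$, which is impossible. This direction does not use algebraic closedness.

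\emph{The direction $V(I)=\varnothing \Rightarrow I=\langle 1\rangle$.} I would prove the contrapositive: if $I\neq\langle 1\rangle$, then $V(I)\neq\varnothing$. The plan has three steps.
\begin{enumerate}
\item[(1)] By Hilbert's Basis Theorem (ii), the Noetherian property, every proper ideal is contained in a maximal ideal $\mathfrak m$. If this route is used, Zorn's lemma is unnecessary. Since $V(\mathfrak m)\subseteq V(I)$, it suffices to find a point of $V(\mathfrak m)$.
\item[(2)] Consider the field $L=\mathbb{K}[x_1,\ldots,x_n]/\mathfrak m$. It is a finitely generated $\mathbb{K}$-algebra, and the key claim is that $L$ is algebraic over $\mathbb{K}$ (Zariski's lemma).
\item[(3)] Granting the claim, the image $\bar x_i$ of each $x_i$ in $L$ is a root of some nonzero polynomial over $\mathbb{K}$. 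Because $\mathbb{K}$ is algebraically closed, the minimal polynomial of $\bar x_i$ is linear, so $\bar x_i=c_i\in\mathbb{K}$. Then $x_i-c_i\in\mathfrak m$ for every $i$, so $\mathfrak m\supseteq\langle x_1-c_1,\ldots,x_n-c_n\rangle$. The latter ideal is maximal, so the two are equal, and every $f\in\mathfrak m$ satisfies $f(c_1,\ldots,c_n)=0$. Hence $(c_1,\ldots,c_n)\in V(I)$.
\end{enumerate}

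The main obstacle is Zariski's lemma in step (2). I would try the short proof for uncountable $\mathbb{K}$ first, since it covers the fields the paper cares most about, such as $\mathbb{C}$. The argument is as follows.
\begin{itemize}
\item $L$ has countable $\mathbb{K}$-dimension, because it is spanned by the images of monomials.
\item Suppose some $t\in L$ were transcendental over $\mathbb{K}$. Then the elements $1/(t-c)$, for $c\in\mathbb{K}$, would be uncountably many $\mathbb{K}$-linearly independent elements. Independence follows from clearing denominators in a finite relation and evaluating the resulting polynomial identity at $t=c$.
\item This contradicts the countable dimension, so $L$ is algebraic over $\mathbb{K}$.
\end{itemize}

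For a general algebraically closed field, such as $\overline{\mathbb{F}}_3$, which is the relevant case for characteristic $3$, I would use Noether normalization instead. It gives algebraically independent $y_1,\ldots,y_r\in L$ with $L$ finite over $\mathbb{K}[y_1,\ldots,y_r]$. The standard integrality argument then shows that $\mathbb{K}[y_1,\ldots,y_r]$ is a field, which forces $r=0$.

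Alternatively, since the paper only needs this as a cited tool, I would be content to reference the standard proof, for instance in \cite{adams-grobner-bases}, for this step.
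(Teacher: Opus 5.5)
Your proof is correct. It goes well beyond what the paper does, because the paper gives no proof of this theorem. It is stated as a standard fact. The only argument supplied is in the paragraph just before it: a certificate $\sum a_i f_i = 1$, evaluated at a common zero, gives $0=1$. That is exactly your first direction.

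The paper also says explicitly, in the remark preceding Lemma~\ref{scalar-extensions}, that its application uses only the trivial implication $1\in I \Rightarrow V(I)=\varnothing$. That implication holds over any field, so neither algebraic closedness nor the converse enters the paper's non-existence argument.

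Your second direction is the standard textbook proof, and each step is sound:
\begin{itemize}
\item pass to a maximal ideal $\mathfrak m\supseteq I$;
\item apply Zariski's lemma to the field $\mathbb{K}[x_1,\ldots,x_n]/\mathfrak m$;
\item use algebraic closedness to force $\mathfrak m=\langle x_1-c_1,\ldots,x_n-c_n\rangle$.
\end{itemize}
What it buys is the genuine ``if and only if''. That converse would only be needed to conclude that solutions \emph{exist} over $\overline{\mathbb{K}}$ from a Gr\"obner basis different from $\{1\}$, which never happens in the paper.

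One point of calibration: the countability proof of Zariski's lemma is not the branch that matters here. The algebraically closed fields in this paper have characteristic $3$, and some, such as $\overline{\mathbb{F}}_3$, are countable. So the Noether normalization argument is the one actually doing the work in this setting. You included it, so there is no gap.
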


\noindent
Recalling that $\langle 1 \rangle$ is the \textit{principal unit ideal} \cite{judson-algebra}, this theorem is equivalent to saying $I$ has no common solutions if and only if $I = \mathbb{K}[x_{1},\ldots,x_{n}]$. 

On the problem of solving systems of polynomial equations, particularly nonlinear ones, Theorem \ref{nullstellensatz} proves to be fundamental. This is especially true as it serves as the theoretical justification for Gröbner bases. However, before defining Gröbner bases, we present the concepts of term orders and polynomial reduction. 

To study such ideals and their solutions using Gröbner bases, we must first establish a \textit{term order} $\succ$ on $\mathbb{K}[x_{1},\ldots,x_{n}]$. That is, we establish a total well-ordering on monomials that is compatible with multiplication. Let ${\mathbf{x}}^{\boldsymbol{\alpha}} = x_{1}^{\alpha_{1}} \cdots x_{n}^{\alpha_{n}}$ and ${\mathbf{x}}^{\boldsymbol{\beta}} = x_{1}^{\beta_{1}} \cdots x_{n}^{\beta_{n}}$ be monomials such that $x_{1} \succ x_{2} \succ \cdots \succ x_{n-1} \succ x_{n}$ and
\[
    \boldsymbol{\alpha} = (\alpha_{1},\ldots,\alpha_{n}), \boldsymbol{\beta} = (\beta_{1},\ldots, \beta_{n}) \in \mathbb{Z}_{>0}^{n}.
\] 
\noindent
Moreover, let $|\boldsymbol{\alpha}|$ denote the sum of all the components of $\boldsymbol{\alpha}$. 
There are three important term orders used in the Gröbner algorithm introduced in \cite{buchberger-grobner}: 
    \begin{enumerate}
    \setlength{\itemsep}{0pt}   % Space between items
    \setlength{\parskip}{0pt}   % Space between paragraphs in items
    \setlength{\topsep}{0pt} 
        \item [(1)] Lexicographical Order (lex): ${\mathbf{x}}^{\boldsymbol{\alpha}} \succ_{\mathrm{lex}} {\mathbf{x}}^{\boldsymbol{\beta}}$ if the leftmost nonzero entry of $\boldsymbol{\alpha} - \boldsymbol{\beta}$ is positive. Note that, for this order, $x_{1}^{\mu} \succ_{\mathrm{lex}} x_{2}^{\nu}$ holds for all $\mu, \nu \in \mathbb{Z}^{+}$.
        \item [(2)] Degree Reverse Lexicographical Order (degrevlex): 
        \[
           {\mathbf{x}}^{\boldsymbol{\alpha}} \succ_{\mathrm{degrevlex}} {\mathbf{x}}^{\boldsymbol{\beta}} \iff \begin{cases}
               |\boldsymbol{\alpha}| > |\boldsymbol{\beta}|, \text{ or} \\
               |\boldsymbol{\alpha}| = |\boldsymbol{\beta}| \text{ and the rightmost nonzero entry of} \\[-4pt] \ \  \boldsymbol{\alpha} - \boldsymbol{\beta} \text{ is negative.}
           \end{cases}
        \]
        \item [(3)] Degree Lexicographical Order (deglex): 
        \[
            {\mathbf{x}}^{\boldsymbol{\alpha}} \succ_{\mathrm{deglex}} {\mathbf{x}}^{\boldsymbol{\beta}} \iff \begin{cases}
                |\boldsymbol{\alpha}| > |\boldsymbol{\beta}|, \text{ or} \\ |\boldsymbol{\alpha}| = |\boldsymbol{\beta}| \text{ and the leftmost nonzero entry of} \\[-4pt] \ \ \boldsymbol{\alpha} - \boldsymbol{\beta} \text{ is positive}.
            \end{cases}
        \]
    \end{enumerate}
The choice of term order does affect the result of polynomial reduction.  For more details on term orders, see Chapter 1.4 in \cite{adams-grobner-bases}.

\begin{example}\label{example1-term-order}
    \normalfont
    Fix a lex term order. Consider the polynomial ring $\mathbb{K}[x_1, x_2, x_3]$ and the monomials $x_{1}^2x_{2}$ and $x_{1}x_{2}^{3}x_{3}^{2}$. We see that $x_{1}^2x_{2} \succ x_{1}x_{2}^{3}x_{3}^{2}$ as 
    \[
      \boldsymbol{\alpha} - \boldsymbol{\beta} = (2,1,0) - (1,3,2) = (1,-2,-2).  
    \]
    \noindent
    The leftmost nonzero entry $(\boldsymbol{\alpha} - \boldsymbol{\beta})_{1} = 1$ is clearly positive.
    
    In the polynomial ring $\mathbb{K}[x_{1}, x_{2}]$, we have 
    \[
        \cdots \succ x_{1}^{2} \succ \cdots \succ x_{1}x_{2}^{2} \succ x_{1}x_{2} \succ x_{1} \succ \cdots \succ x_{2}^{3} \succ x_{2}^{2} \succ x_{2} \succ 1.
    \]
\end{example}

\begin{example}
    \normalfont
    Fix a degrevlex term order. Consider the polynomial ring $\mathbb{K}[x_1, x_2, x_3]$. The monomials $x_{1}^{2}x_{2}x_{3}$ and $x_{1}x_{2}^{3}$ are such that $x_{1}^{2}x_{2}x_{3} \prec x_{1}x_{2}^{3}$ since 
    \[
       \boldsymbol{\alpha} - \boldsymbol{\beta} = (2,1,1) - (1,3,0) = (1,-2, 1). 
    \]
    \noindent
    In this case, the rightmost nonzero entry $(\boldsymbol{\alpha} - \boldsymbol{\beta})_{3} = 1$ is positive. 
\end{example}

\begin{example}
    \normalfont
    Fix a deglex term order. Consider the same polynomial ring and monomials as in Example \ref{example1-term-order}. We see that 
    \[
        |\boldsymbol{\alpha}| - |\boldsymbol{\beta}| = 3 - 6 = -3.
    \]
    \noindent
    Therefore, $x_{1}^2x_{2} \prec x_{1}x_{2}^{3}x_{3}^{2}$.

    In the two-variable polynomial ring $\mathbb{K}[x_{1}, x_{2}]$, we have 
    \[
    \cdots \succ x_{1}^{3} \succ x_{1}^{2}x_{2} \succ x_{1}x_{2}^2 \succ x_{2}^{3} \succ x_{1}^{2} \succ x_{1}x_{2} \succ x_{2}^{2} \succ x_{1} \succ x_{2} \succ 1.
    \]
\end{example}

Now, the concept of polynomial \textit{reduction} is described by the Euclidean Algorithm and substantiated by the Division Algorithm. Recalling some concepts in one variable, given a polynomial $f \in \mathbb{K}[x]$ such that $f \neq 0$, the \textit{degree} of $f$, denoted $\mathrm{deg}(f)$, is the largest exponent of $x$ appearing in $f$. The \textit{leading term} of $f$, denoted $\text{lt}(f)$, is the monomial of $f$ with the highest degree. The \textit{leading coefficient} of $f$, denoted $\mathrm{lc}(f)$, is the coefficient of the leading term. The \textit{leading power product} of $f$, denoted $\mathrm{lp}(f)$, is the leading monomial of $f$ without its coefficient. Consider the following example for more clarity. 

\begin{example}
    \normalfont
    For the case of one variable, consider the general form of a nonzero polynomial 
    \[f(x) = \sum\limits_{k=1}^{n} a_{k}x^{k} = a_{0} + a_{1}x + \cdots + a_{n-1}x^{n-1} + a_{n}x^{n} \in \mathbb{K}[x],\]
    \noindent
    where $a_{k} \in \mathbb{K}$ and $a_{n} \neq 0$. Trivially, we have that $\mathrm{deg}(f) = n \in \mathbb{Z}^{+}$, $\mathrm{lt}(f) = a_{n}x^{n}$, $\mathrm{lc}(f) = a_{n}$, and $\mathrm{lp}(f) = x^{n}$.
\end{example}

\begin{example}
    \normalfont
    For the multivariate case, these notions differ. We must first fix a term order before defining these concepts. Let $\succ$ be a term order. Consider a nonzero multivariate polynomial given by 
    \[
        g({\mathbf{x}}) = \sum\limits_{\boldsymbol{\beta}} a_{\boldsymbol{\beta}}{\mathbf{x}}^{\boldsymbol{\beta}} \in \mathbb{K}[x_{1},\ldots,x_{n}], 
    \]
    \noindent
    where ${\mathbf{x}}^{\boldsymbol{\beta}} = x_{1}^{\beta_{1}} \cdots x_{n}^{\beta_{n}}$, and $\boldsymbol{\beta} = (\beta_{1},\ldots, \beta_{n}) \in \mathbb{Z}_{>0}^{n}$. The \textit{total degree} of a monomial $\mathbf{x}^{\boldsymbol{\beta}} = x_{1}^{\beta_{1}} \ldots x_{n}^{\beta_{n}}$ is given by
    \[
        \mathrm{deg}(\mathbf{x}^{\boldsymbol{\beta}}) = \sum\limits_{i=1}^{n} \beta_{i} = |\boldsymbol{\beta}|.
    \]
    \noindent
    Hence, the degree of $g$ is defined by
    \[
        \mathrm{deg}(g) = \mathrm{max}\{|\boldsymbol{\beta}| \mid a_{\boldsymbol{\beta}} \neq 0\}.
    \]
\end{example}

\begin{example}
    \normalfont
    Consider the polynomial $f = 3x_{1}^{2}x_{2}x_{3}^4 + 2x_{1}x_{2}^{3} - x_{2}^{4}x_{3} \in \mathbb{K}[x_1, x_2, x_3].$ Fix a lex term order $x_1 \succ x_2 \succ x_3$. We see that 
    \[
        \mathrm{deg}(f) = 7, \ \mathrm{lt}(f) = 3x_{1}^{2}x_{2}x_{3}^4, \ \mathrm{lc}(f) = 3, \ \mathrm{lp}(f) = x_{1}^{2}x_{2}x_{3}^4. 
    \]
\end{example}

Consider two polynomials $f,g \in \mathbb{K}[x]$. Using the Division Algorithm, we can iteratively reduce $f$ modulo $g$ to obtain another polynomial $h \in \mathbb{K}[x]$. This process is denoted by
\[
    f \xrightarrow{\hspace{0.2cm} g \hspace{0.2cm}} h.
\]
\noindent
In reducing $f$ modulo $g$, a suitable monomial multiple of $g$ is subtracted from $f$, resulting in the polynomial $h$. For more details on polynomial reduction, see \cite{buchberger-intro-grobner, adams-grobner-bases}. That is, we can reduce $f$ modulo $g$ repeatedly and denote it as follows
\[
    f \xrightarrow{\hspace{0.2cm} g \hspace{0.2cm}}_{+} r,
\]

\noindent
where $r$ is the remainder polynomial with $\mathrm{deg}(r) < \mathrm{deg}(f)$. 

We can generalize these concepts to the multivariate case, which is useful in understanding how Gröbner bases work. We fix a term order on $\mathbb{K}[x_{1},\ldots, x_{n}]$.

\begin{definition}
    \normalfont
    Let $f, h$ and $g_{1},\ldots,g_{k}$ be polynomials in $\mathbb{K}[x_{1},\ldots,x_{n}]$ with $g_{i} \neq 0$ for $i=1,\ldots,k$. Let $G = \{g_{1},\ldots,g_{k}\}$. We say $f$ \textit{reduces} to $h$ modulo $G$, denoted 
    \[
        f \xrightarrow{\hspace{0.2cm} G \hspace{0.2cm}}_{+} h
    \]

    \noindent
    if and only if there exists a sequence of indices $\{i_s\}_{s=1}^{t} \subset \{1,\ldots,k\}$ and a sequence of polynomials $\{h_{s}\}_{s=1}^{t-1} \subset \mathbb{K}[x_{1},\ldots,x_{n}]$ such that
    \begin{equation}
        f \xrightarrow{\hspace{0.2cm} g_{i_{1}} \hspace{0.2cm}} h_{1} \xrightarrow{\hspace{0.2cm} g_{i_{2}} \hspace{0.2cm}} h_{2} \xrightarrow{\hspace{0.2cm} g_{i_{3}} \hspace{0.2cm}} \cdots \xrightarrow{\hspace{0.2cm} g_{i_{t-1}} \hspace{0.2cm}} h_{t-1} \xrightarrow{\hspace{0.2cm} g_{i_{t}} \hspace{0.2cm}} h. \label{poly_red}
    \end{equation}
\end{definition}
\noindent
The terminal polynomial $h$ of this reduction is called the \textit{normal form} of $f$ modulo $G$. This definition implies that a polynomial $f$ can be reduced modulo $G$ finitely many times, as seen above. In general, many, but not infinitely many, reductions are possible. If $h$ cannot be reduced further with respect to $G$, we write $\underline{h}_{G}$.

Now we define the concept of a \textit{reduced} polynomial, equivalent to the remainder polynomial.

\begin{definition}
    \normalfont
    A polynomial $r$ is called \textit{reduced} with respect to a set of non-zero polynomials $G = \{g_{1},\ldots,g_{k}\}$ if $r=0$ or no power product that appears in $r$ is divisible by any one of the $\mathrm{lp}(g_{i})$, for $i=1,\ldots,k$.  That is, $r$ cannot be reduced modulo $G$.
\end{definition}

\noindent
If $f \xrightarrow{\hspace{0.2cm} G \hspace{0.2cm}}_{+} r$ and $r$ is reduced with respect to $F$, then $r$ is called a remainder for $f$ with respect to $G$. 

Multivariate polynomial reduction always terminates but is not necessarily unique, as seen in Eq.  \eqref{poly_red}. That is, given $f$ and $G$, there may exist $h_{1}, h_{2}$ such that either 
\[f \xrightarrow{\hspace{0.2cm} G \hspace{0.2cm}}_{+} h_{1}, \ \text{ or } \ f \xrightarrow{\hspace{0.2cm} G \hspace{0.2cm}}_{+} h_{2}.\]
For more information on termination of multivariate polynomial reduction, see \cite{buchberger-grobner}.

\begin{theorem}
    \label{theorem-div-algo}
    Given a set of nonzero polynomials $G = \{g_{1}, \ldots, g_{k}\}$, and a polynomial $f \in \mathbb{K}[x_{1}, \ldots, x_{n}],$ the Division Algorithm produces polynomials $u_{1},\ldots, u_{k}, r \in \mathbb{K}[x_{1},\ldots,x_{n}]$ such that
    \[
        f = u_{1}g_{1} + \cdots + u_{k}g_{k} + r,
    \]
    \noindent
    with $r$ reduced with respect to $G$ and 
    \[
    \mathrm{lp}(f) = \max(\max\limits_{i \in [1,k]} (\mathrm{lp}(u_{i}) \mathrm{lp}(g_{i})), \mathrm{lp}(r)).
    \]
\end{theorem}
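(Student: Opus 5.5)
We prove Theorem \ref{theorem-div-algo} constructively: we run the multivariate Division Algorithm and check by induction that it keeps an invariant and terminates. Fix the term order $\succ$. We maintain polynomials $u_1,\ldots,u_k$, $r$ and a ``current dividend'' $h$, initialised as $u_i=0$, $r=0$, $h=f$. Throughout, we keep the invariant
\[
    f = u_{1}g_{1} + \cdots + u_{k}g_{k} + r + h ,
\]
together with two conditions: $r$ is reduced with respect to $G$, and every power product occurring in $r$, $h$, or any $\mathrm{lp}(u_i)\mathrm{lp}(g_i)$ is $\preceq \mathrm{lp}(f)$.

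The loop step is as follows, performed while $h\neq 0$.
\begin{itemize}
    \item If $\mathrm{lp}(h)$ is divisible by $\mathrm{lp}(g_i)$ for some $i$, take the smallest such $i$. Set $X=\mathrm{lt}(h)/\mathrm{lt}(g_i)$, and replace $u_i\mapsto u_i+X$ and $h\mapsto h-Xg_i$.
    \item Otherwise, move the leading term: $r\mapsto r+\mathrm{lt}(h)$ and $h\mapsto h-\mathrm{lt}(h)$.
\end{itemize}
Both updates preserve the displayed identity trivially. In the first case the leading term of $h$ cancels. Since the term order is compatible with multiplication, every term of $Xg_i$ is $\preceq \mathrm{lt}(h)$. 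Hence the new $h$ either vanishes or has $\mathrm{lp}$ strictly smaller than before, and $\mathrm{lp}(X)\mathrm{lp}(g_i)=\mathrm{lp}(h)\preceq\mathrm{lp}(f)$. In the second case the term added to $r$ is not divisible by any $\mathrm{lp}(g_j)$, so $r$ stays reduced, and again $\mathrm{lp}(h)$ strictly drops.

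Termination is the main point, and the only place where the well-ordering property of a term order is used. In each step $\mathrm{lp}(h)$ strictly decreases. A strictly decreasing sequence of monomials in a well-order is finite, so eventually $h=0$. This gives $f=\sum u_ig_i+r$ with $r$ reduced with respect to $G$.

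For the equality of leading power products, the invariant already gives $\mathrm{lp}(u_i)\mathrm{lp}(g_i)\preceq\mathrm{lp}(f)$ and $\mathrm{lp}(r)\preceq\mathrm{lp}(f)$. So the maximum is at most $\mathrm{lp}(f)$. For the reverse inequality, look at the very first step, where $h=f$. Either $\mathrm{lt}(f)$ is divided by some $\mathrm{lt}(g_i)$, or it is moved into $r$.
\begin{itemize}
    \item In the first case, $\mathrm{lp}(f)$ appears as $\mathrm{lp}(X)\mathrm{lp}(g_i)$. Later additions to $u_i$ have strictly smaller $\mathrm{lp}$, so $\mathrm{lp}(u_i)=\mathrm{lp}(X)$ in the end.
    \item In the second case, $\mathrm{lp}(f)$ is a term of $r$ that later additions cannot cancel, since they are strictly smaller.
\end{itemize}
Hence the maximum equals $\mathrm{lp}(f)$. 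The case $f=0$ is trivial with all $u_i=r=0$, interpreting $\mathrm{lp}(0)$ as $-\infty$.
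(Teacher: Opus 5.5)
Your proof is correct and is essentially the argument the paper relies on. The paper gives no proof of its own but cites Theorem 1.5.9 of \cite{adams-grobner-bases}, whose proof runs this same division algorithm, maintains the invariant $f=\sum_i u_i g_i + r + h$, and uses that a term order is a well-ordering to obtain termination, exactly as you do.
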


\begin{proof}
 See Theorem 1.5.9 (p. 30) in \cite{adams-grobner-bases}.
\end{proof}

Having defined multivariate polynomial reduction, we can now define the fundamental object used later on in our exploration of left-symmetric structures on $\mathfrak{osp}(1|2)$: Gröbner bases. 

\begin{definition}[Gröbner Basis]
    \normalfont
    Let $G = \{g_{1},\ldots,g_{k}\}$ be a set of nonzero polynomials contained in an ideal $I$. G is called a \textit{Gröbner basis} if and only if for all $f \in I$, with $f \neq 0$, there exists an $i \in \{1,\ldots,k\}$ such that $\mathrm{lp}(g_{i}) \mid \mathrm{lp}(f).$
\end{definition}

We present three additional characterizations of a Gröbner basis. However, we first consider the following definition. Given a subset $S \subset \mathbb{K}[x_{1},\ldots,x_{n}]$, define the \textit{leading term ideal} of $S$ by
\[
    \mathrm{Lt}(S) = \langle \mathrm{lt}(s) \mid s \in S \rangle.
\]
\begin{proposition}
    Let $I$ be a nonzero ideal of $\mathbb{K}[x_{1},\ldots,x_{n}]$. The following statements are equivalent for a set of nonzero polynomials $G = \{g_{1},\ldots,g_{k}\} \subseteq I$.
    \begin{enumerate}
     \setlength{\itemsep}{0pt}   % Space between items
    \setlength{\parskip}{0pt}   % Space between paragraphs in items
    \setlength{\topsep}{0pt}
    \item [(i)] $G$ is a Gröbner basis for I.
    \item [(ii)] $f \in I$ if and only if $f \xrightarrow{\hspace{0.2cm} G \hspace{0.2cm}}_{+} 0$. 
    \item [(iii)] $f \in I$ if and only if $f = \sum\limits_{i=1}^{k} h_{i}g_{i}$ with $\mathrm{lp}(f) = \max\limits_{i \in [1,k]} (\mathrm{lp}(h_{i})\mathrm{lp}(g_{i}))$.
    \item [(iv)] $\mathrm{Lt}(G) = \mathrm{Lt}(I)$.
    \end{enumerate}
\end{proposition}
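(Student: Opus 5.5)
I will prove the equivalences through the cycle (i) $\Rightarrow$ (ii) $\Rightarrow$ (iii) $\Rightarrow$ (iv) $\Rightarrow$ (i). The tools are the Division Algorithm (Theorem \ref{theorem-div-algo}) and the definition of a Gröbner basis via divisibility of leading power products. Throughout, $G \subseteq I$ is fixed. So every polynomial of the form $\sum h_i g_i$ lies in $I$, and each reduction step $f \xrightarrow{g_i} h$ changes $f$ by a multiple of $g_i$, so $f - h \in I$. This gives all the ``if'' directions for free: if $f \xrightarrow{G}_{+} 0$ or $f = \sum h_i g_i$, then $f \in I$.

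For (i) $\Rightarrow$ (ii), let $f \in I$ and apply the Division Algorithm to get $f = \sum u_i g_i + r$ with $r$ reduced with respect to $G$. Then $r = f - \sum u_i g_i \in I$. If $r \neq 0$, the Gröbner property (i) gives some $i$ with $\mathrm{lp}(g_i) \mid \mathrm{lp}(r)$. This contradicts $r$ being reduced, so $r = 0$. The Division Algorithm is a sequence of one-step reductions, so $f \xrightarrow{G}_{+} 0$.

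For (ii) $\Rightarrow$ (iii), let $f \in I$ with $f \xrightarrow{G}_{+} 0$. I will record the multiples subtracted at each step and collect them into $f = \sum h_i g_i$. Each step removes a term $c\,\mathbf{x}^{\boldsymbol\beta}\mathrm{lt}(g_i)$ whose power product appears in the current polynomial. Each intermediate polynomial is obtained from $f$ by cancelling terms and adding only terms smaller than the one cancelled, so all its power products are $\preceq \mathrm{lp}(f)$. Hence every $\mathrm{lp}(h_i)\mathrm{lp}(g_i) \preceq \mathrm{lp}(f)$. Equality must be attained because the leading term of $f$ must eventually be cancelled. This bookkeeping with the term order is the main technical point, and I would handle it by induction on the number of reduction steps.

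For (iii) $\Rightarrow$ (iv), the inclusion $\mathrm{Lt}(G) \subseteq \mathrm{Lt}(I)$ is clear. Conversely, for $f \in I$, the representation in (iii) has some $i$ with $\mathrm{lp}(f) = \mathrm{lp}(h_i)\mathrm{lp}(g_i)$. So $\mathrm{lt}(f)$ is a scalar multiple of a monomial times $\mathrm{lt}(g_i)$, which lies in $\mathrm{Lt}(G)$.

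For (iv) $\Rightarrow$ (i), let $0 \neq f \in I$. Then $\mathrm{lt}(f) \in \mathrm{Lt}(G)$, which is a monomial ideal generated by the $\mathrm{lt}(g_i)$. I will use the standard fact that a monomial lies in a monomial ideal if and only if it is divisible by one of the generating monomials. Proving this amounts to comparing terms in $\sum a_i\,\mathrm{lt}(g_i)$: every term on the right is divisible by some $\mathrm{lp}(g_i)$. It follows that $\mathrm{lp}(g_i) \mid \mathrm{lp}(f)$ for some $i$, which is (i). The only genuinely delicate step in the whole cycle is the term-order estimate in (ii) $\Rightarrow$ (iii); the rest is formal.
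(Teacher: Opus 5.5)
Your proposal is correct and follows the same route as the proof the paper defers to (Adams--Loustaunau, Theorem 1.6.2). That proof is the cycle (i)$\Rightarrow$(ii)$\Rightarrow$(iii)$\Rightarrow$(iv)$\Rightarrow$(i), with (i)$\Rightarrow$(ii) via the Division Algorithm exactly as the paper sketches right after the statement, and (iv)$\Rightarrow$(i) via monomial-ideal membership; in (ii)$\Rightarrow$(iii), the equality follows more directly from $\mathrm{lp}(f)=\mathrm{lp}\bigl(\sum_i h_i g_i\bigr)\preceq \max_i \mathrm{lp}(h_i)\mathrm{lp}(g_i)$ than from tracking when the leading term of $f$ is cancelled.
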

    
\begin{proof}
    See Theorem 1.6.2 (p. 32) in \cite{adams-grobner-bases}. 
\end{proof}

\noindent
In other words, if $G$ is a Gröbner basis for $I$, then there are no nonzero polynomials in $I$ that are reduced modulo $G$. This is mainly because, by Theorem \ref{theorem-div-algo}, if $f \in \mathbb{K}[x_{1},\ldots, x_{n}]$, then there exists an $r \in \mathbb{K}[x_{1},\ldots, x_{n}]$ reduced with respect to $G$ with $f \xrightarrow{\hspace{0.2cm} G \hspace{0.2cm}}_{+} r$. Hence, $f-r \in I$ and $f \in I$ if and only if $r \in I$. If $r = 0$, then trivially $f \in I$. Conversely, if $f \in I$ and $r \neq 0$, then $r \in I$, and by (i), there exists $i \in \{1,\ldots,k\}$ such that $\mathrm{lp}(g_{i})$ divides $\mathrm{lp}(r)$. Yet, $r$ is reduced with respect to $G$. Thus, $r=0$. 

\begin{corollary}
    If $G = \{g_{1}, \ldots, g_{k}\}$ is a Gröbner basis for the ideal $I$, then we have that $I = \langle g_{1}, \ldots, g_{k} \rangle$. 
\end{corollary}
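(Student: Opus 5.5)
We prove the corollary by establishing the two inclusions $\langle g_{1},\ldots,g_{k}\rangle \subseteq I$ and $I \subseteq \langle g_{1},\ldots,g_{k}\rangle$. The first is immediate: by the definition of a Gröbner basis, $G \subseteq I$, and $I$ is an ideal. Hence every combination $\sum_{i=1}^{k} h_{i} g_{i}$ with $h_{i} \in \mathbb{K}[x_{1},\ldots,x_{n}]$ lies in $I$.

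For the reverse inclusion, take an arbitrary $f \in I$. Apply the Division Algorithm (Theorem \ref{theorem-div-algo}) to $f$ and $G$. This gives polynomials $u_{1},\ldots,u_{k}, r$ with
\[
    f = u_{1}g_{1} + \cdots + u_{k}g_{k} + r,
\]
where $r$ is reduced with respect to $G$. Then $r = f - \sum_{i} u_{i} g_{i}$ is a difference of elements of $I$, so $r \in I$.

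The key step is to show $r = 0$. Suppose $r \neq 0$. Since $r \in I$ and $G$ is a Gröbner basis, the definition gives an index $i$ with $\mathrm{lp}(g_{i}) \mid \mathrm{lp}(r)$. But $\mathrm{lp}(r)$ is a power product appearing in $r$, and $r$ is reduced with respect to $G$, so no $\mathrm{lp}(g_{i})$ divides it. This contradiction forces $r = 0$. Therefore $f = \sum_{i} u_{i} g_{i} \in \langle g_{1},\ldots,g_{k}\rangle$.

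Alternatively, one can cite characterization (iii) of the preceding proposition. It states directly that every $f \in I$ can be written as $\sum_{i} h_{i} g_{i}$, which gives the reverse inclusion in one line. The proof has no real obstacle. The only point needing care is that reducedness of $r$ refers to every power product in $r$, and in particular to its leading one; this is exactly what is needed to contradict the Gröbner basis property.
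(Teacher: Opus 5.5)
Your proof is correct and takes essentially the same route as the paper, which simply cites Corollary 1.6.3 of Adams and Loustaunau. The inclusion $\langle g_{1},\ldots,g_{k}\rangle \subseteq I$ follows from $G \subseteq I$, and the reverse inclusion follows by dividing $f \in I$ by $G$ and showing the remainder is zero, which is exactly the argument the paper sketches in the paragraph just before the corollary.
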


\begin{proof}
    See Corollary 1.6.3 (p. 33) in \cite{adams-grobner-bases}.
\end{proof}

The previous definition and proposition, along with other consequences found in \cite{adams-grobner-bases}, lead to the following corollary and theorem.

\begin{corollary}
    Every nonzero ideal $I$ of $\mathbb{K}[x_{1},\ldots,x_{n}]$ has a Gröbner basis.
\end{corollary}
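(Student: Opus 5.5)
The plan is to deduce existence directly from Hilbert's Basis Theorem, using the equivalence of characterizations (i) and (iv) in the preceding Proposition. Let $I \neq 0$ be an ideal of $\mathbb{K}[x_{1},\ldots,x_{n}]$ and fix a term order $\succ$. The key object is the leading term ideal $\mathrm{Lt}(I) = \langle \mathrm{lt}(f) \mid f \in I \rangle$. By part (i) of Hilbert's Basis Theorem, $\mathrm{Lt}(I)$ is finitely generated. However, the generators it supplies need not themselves be leading terms of elements of $I$, so one more step is needed.

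To fix this, I will use the ascending chain condition (part (ii)). Choose nonzero $g_{1} \in I$. Inductively, if $\langle \mathrm{lt}(g_{1}),\ldots,\mathrm{lt}(g_{j})\rangle \subsetneq \mathrm{Lt}(I)$, then some $f \in I$ has $\mathrm{lt}(f) \notin \langle \mathrm{lt}(g_{1}),\ldots,\mathrm{lt}(g_{j})\rangle$; set $g_{j+1} = f$. This produces a strictly ascending chain of ideals $\langle \mathrm{lt}(g_{1})\rangle \subsetneq \langle \mathrm{lt}(g_{1}),\mathrm{lt}(g_{2})\rangle \subsetneq \cdots$, which must stabilize by part (ii). Hence the process terminates at some $k$, giving
\[
\langle \mathrm{lt}(g_{1}),\ldots,\mathrm{lt}(g_{k})\rangle = \mathrm{Lt}(I),
\]
that is, $\mathrm{Lt}(G) = \mathrm{Lt}(I)$ for $G = \{g_{1},\ldots,g_{k}\} \subseteq I$. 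Equivalence (iv)$\Rightarrow$(i) of the Proposition then shows that $G$ is a Gröbner basis.

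The main point needing care is the claim, implicit in passing from (iv) to the definition, that membership in a monomial ideal is detected termwise: if $\mathrm{lt}(f) \in \langle \mathrm{lt}(g_{i})\rangle$, then some $\mathrm{lp}(g_{i})$ divides $\mathrm{lp}(f)$. Since the Proposition is quoted as already established, I will simply invoke it there. If a self-contained argument were wanted, the standard proof is as follows. In a monomial ideal, every polynomial in the ideal is a sum of monomial multiples of the generators. Comparing the coefficient of $\mathrm{lp}(f)$ on both sides then shows that $\mathrm{lp}(f)$ is a multiple of some $\mathrm{lp}(g_{i})$.

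Finally, by the earlier Corollary, $I = \langle g_{1},\ldots,g_{k}\rangle$, so $G$ also generates $I$, as a Gröbner basis should.
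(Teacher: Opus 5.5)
Your proof is correct and is essentially the argument behind the paper's citation (Adams--Loustaunau, Corollary 1.6.5). There, Hilbert's Basis Theorem shows that $\mathrm{Lt}(I)$ is generated by finitely many leading terms $\mathrm{lt}(g_1),\ldots,\mathrm{lt}(g_k)$ with $g_i \in I$, and then (iv)$\Rightarrow$(i) of the Proposition is applied. Your ascending-chain construction just makes explicit why these generators can be taken to be leading terms of elements of $I$, so part (i) of the Basis Theorem is not actually needed.
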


\begin{proof}
    See Corollary 1.6.5 (p. 34) in \cite{adams-grobner-bases}.
\end{proof}

\begin{theorem}
    Let $G = \{g_{1},\ldots,g_{k}\}$ be a set of nonzero polynomials in $\mathbb{K}[x_{1},\ldots,x_{n}]$. Then, $G$ is a Gröbner basis if and only if for all $f \in \mathbb{K}[x_{1},\ldots,x_{n}]$ the remainder of the reduction of $f$ modulo $G$ is unique. \label{theorem_grobner2}
\end{theorem}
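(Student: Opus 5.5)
We prove the two implications separately, using the characterizations (i)--(iv) of a Gröbner basis from the preceding proposition and the Division Algorithm (Theorem \ref{theorem-div-algo}). Throughout, $I = \langle g_{1},\ldots,g_{k}\rangle$ is the ideal generated by $G$, so $G$ is a Gröbner basis for $I$ in the sense of the definition.

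\textbf{($\Rightarrow$)} Suppose $G$ is a Gröbner basis and that $f \xrightarrow{\hspace{0.2cm} G \hspace{0.2cm}}_{+} r_{1}$ and $f \xrightarrow{\hspace{0.2cm} G \hspace{0.2cm}}_{+} r_{2}$ with $r_{1}, r_{2}$ both reduced with respect to $G$.
\begin{enumerate}
\item Each single reduction step subtracts a monomial multiple of some $g_{i}$, so $f - r_{1} \in I$ and $f - r_{2} \in I$. Hence $r_{1} - r_{2} \in I$.
\item No power product of $r_{1}$ or of $r_{2}$ is divisible by any $\mathrm{lp}(g_{i})$. The power products of $r_{1} - r_{2}$ are among these, so $r_{1} - r_{2}$ is also reduced with respect to $G$.
\item If $r_{1} - r_{2} \neq 0$, the defining property of a Gröbner basis gives some $i$ with $\mathrm{lp}(g_{i}) \mid \mathrm{lp}(r_{1} - r_{2})$. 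This contradicts step 2, so $r_{1} = r_{2}$.
\end{enumerate}

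\textbf{($\Leftarrow$)} Assume every $f$ has a unique remainder modulo $G$. We verify characterization (ii): every $f \in I$ satisfies $f \xrightarrow{\hspace{0.2cm} G \hspace{0.2cm}}_{+} 0$. The converse direction of (ii) is automatic, since reducing to $0$ exhibits $f$ as a combination of the $g_{i}$.

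The main obstacle is passing from ``uniqueness of remainders'' to ``remainder $0$'' for elements of $I$. We first establish one key claim.

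\emph{Key claim.} If $f \xrightarrow{\hspace{0.2cm} G \hspace{0.2cm}}_{+} r$ with $r$ reduced, then for any monomial $m$, any index $i$, and any $c \in \mathbb{K}$, the polynomial $f + c\,m\,g_{i}$ has the same remainder $r$.

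To prove the claim, let $r'$ be the unique remainder of $m g_{i}$. Since $m g_{i}$ reduces in one step to $0$, which is reduced, uniqueness forces $r' = 0$. We then want additivity of remainders: if $f$ has remainder $r$ and $h$ has remainder $s$, then $f + h$ has remainder $r + s$. This is the delicate step, because a single reduction of $f + h$ need not split into a reduction of $f$ and a reduction of $h$.

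To handle it, I would use a normal-form map argument. Define $N(f)$ to be the unique remainder of $f$. Show $N$ is $\mathbb{K}$-linear by induction on $\mathrm{lp}(f + h)$ with respect to the well-ordering $\succ$. The idea is to reduce the monomials of $f$ and $h$ one at a time: any reduction step applied to a term of $f$ can be mirrored on $f + h$. Uniqueness then means we may choose convenient reduction paths. Concretely, $f + h$ reduces to $N(f) + h$, and then to $N(f) + N(h)$, provided we track cancellations. Where a term of $N(f)$ combines with a term produced while reducing $h$, the result is still either reduced or further reducible, and uniqueness absorbs this. Linearity, together with $N(c\,m\,g_{i}) = 0$, gives the claim.

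With the claim in hand, any $f = \sum h_{i} g_{i} \in I$ is a finite sum of terms $c\,m\,g_{i}$. Starting from $N(0) = 0$ and applying the claim repeatedly gives $N(f) = 0$. Thus (ii) holds, and by the preceding proposition $G$ is a Gröbner basis.

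If the linearity argument proves too cumbersome, the alternative is to cite Theorem 1.6.7 of \cite{adams-grobner-bases}, in keeping with how the paper handles the other Gröbner-basis results.
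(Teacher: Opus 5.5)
Your forward direction is correct and is the standard argument. Your plan for the converse is also sound: show that the remainder map $N$ is unchanged when $c\,m\,g_i$ is added, then telescope over $f=\sum c_j m_j g_{i_j}\in I$. The gap is in the proof of your key claim. The route through linearity of $N$ does not work as sketched. In particular, the assertion that $f+h$ reduces to $N(f)+h$ is false in general. A reduction step acts on the total coefficient of a power product $t$ in $f+h$, and the coefficients of $t$ in $f$ and in $h$ can combine or cancel. For instance, take $h=-f$ with $f$ not reduced. Then $f+h=0$ admits no reduction step at all, so it cannot reduce to $N(f)-f\neq 0$. ``Uniqueness absorbs this'' is not an argument. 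Moreover, linearity of $N$ is most easily derived from the key claim rather than the other way round, so the detour only adds difficulty.

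The missing idea is a direct coefficient comparison, which needs no linearity.

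\emph{Step one.} If $p$ reduces to $q$ in one step, then $N(p)=N(q)$. To see this, prepend that step to any reduction of $q$ to a reduced polynomial, and invoke uniqueness of the remainder of $p$.

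\emph{Step two.} Let $q=p+c\,m\,g_i$ with $c\neq 0$, put $t=m\,\mathrm{lp}(g_i)$, and let $a$ and $b$ be the coefficients of $t$ in $p$ and $q$. Then $b=a+c\,\mathrm{lc}(g_i)$, and there are three cases.
\begin{enumerate}
\item If $a\neq 0$ and $b\neq 0$, both $p$ and $q$ reduce at $t$ by $g_i$ to the same polynomial $p-(a/\mathrm{lc}(g_i))\,m\,g_i$.
\item If $a=0$, then $b\neq 0$ and $q$ reduces in one step to $q-c\,m\,g_i=p$.
\item If $b=0$, then $a\neq 0$ and $p$ reduces in one step to $p+c\,m\,g_i=q$.
\end{enumerate}
In every case step one gives $N(p)=N(q)$, which is exactly your key claim, and the rest of your argument then goes through. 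For comparison, the paper gives no argument of its own for this theorem. It simply cites Theorem 1.6.7 of \cite{adams-grobner-bases}, which is your stated fallback.
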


\begin{proof}
    See Theorem 1.6.7 (p. 34) in \cite{adams-grobner-bases}.
\end{proof}

Theorem \ref{theorem_grobner2}, equivalently, states that a Gröbner basis is a generating set $G$ of an ideal such that every normal form of a polynomial $f \in I$ equals zero modulo $G$. A fundamental consequence of this theorem, as seen earlier with Theorem \ref{nullstellensatz}, is that an ideal $I$ contains the constant polynomial $1$ if and only if its Gröbner basis $G$ contains $1$. 

\section{Left-Symmetric Structures on \texorpdfstring{$\mathfrak{sl}_{2}(\mathbb{K})$}{sl2(k)}} \label{LSS on sl2(k)}

The study of LSAs on Lie algebras and superalgebras has been carried out extensively by several experts (see \cite{auslander, kim, perea}). The study of left-symmetric algebra structures on Lie algebras has been extensive. However, a complete existence and classification theory over arbitrary fields, especially in positive characteristic, is still not available. Helmstetter showed in \cite{helmstetter} that every Lie algebra $\mathfrak{g}$ over a field $\mathbb{K}$ of characteristic $0$ such that $[\mathfrak{g}, \mathfrak{g}]=\mathfrak{g}$ does not admit LSAs. Additionally, if $\mathfrak{g}$ is semisimple over a field $\mathbb{K}$ of characteristic $0$, then $\mathfrak{g}$ does not admit LSAs either. 

In \cite{burde1}, Burde studies left-symmetric structures on simple modular Lie algebras. In particular, he proves that if $\mathfrak{g}$ is a classical simple Lie algebra over a field of characteristic $p > 3$ and $\mathfrak{g}$ admits a compatible left-symmetric algebra structure, then $p$ divides $\mathrm{dim}(\mathfrak{g})$. Recall that an LSA is a non-associative algebra $\mathcal{A}$ such that the associator $(a,b,c)$ is symmetric in $a$ and $b$
\[
(a, b, c) = (b, a, c) \quad \forall a, b, c \in \mathcal{A}.
\]
Moreover, every LSA gives rise to a Lie algebra through the commutator bracket $[x, y] = xy - yx$, and a left-symmetric structure on a Lie algebra $\mathfrak{g}$ is a left-symmetric algebra structure on the vector space of $\mathfrak{g}$ whose commutator reproduces the Lie bracket of $\mathfrak{g}$.

For the present project, the relevant case is $\mathfrak{g} = \mathfrak{sl}_{2}(\mathbb{K})$, since the even part of $\mathfrak{osp}(1|2)$ is isomorphic to $\mathfrak{sl}_{2}(\mathbb{K})$. Since $\mathrm{dim}(\mathfrak{sl}_{2}(\mathbb{K})) = 3$, Burde's result rules out compatible left-symmetric algebra structures on $\mathfrak{sl}_{2}(\mathbb{K})$ for $p > 3$. The remaining exceptional case is $p = 3$, where Burde explicitly classifies the compatible structures. Burde’s result for the Lie algebra $\mathfrak{sl}_2(\mathbb{K})$ is described below. 

\begin{theorem}[Burde, Specialized to $\mathfrak{sl}_{2}(\mathbb{K})$] \label{burde-theorem-specialized}
    Let $\mathbb{K}$ be an algebraically closed field of characteristic $p>2$, $\mathfrak{g} \coloneqq \mathfrak{sl}_2(\mathbb{K})$ and consider its standard basis $\langle E, F, H \rangle$.
    Then, $\mathfrak{sl}_2(\mathbb{K})$ admits left-symmetric structures if and only if $p = 3$. In characteristic $3$, these structures are classified into four families, denoted $\mathcal{A}_{1,u,v}$, $\mathcal{A}_{2, \gamma}$, $\mathcal{A}_{3}$, and $\mathcal{A}_{4}$.
\end{theorem}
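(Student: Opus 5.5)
The statement splits into a non-existence half for $p>3$ and an existence-plus-classification half for $p=3$; I would treat them separately. For $p>3$ I would use Burde's divisibility criterion quoted just above: a classical simple Lie algebra over a field of characteristic $p>3$ admitting a compatible LSA must have $p \mid \dim\mathfrak{g}$. Since $\dim \mathfrak{sl}_2(\mathbb{K})=3$ and $p>3$, this is impossible. To make the argument self-contained, I would also sketch why the criterion holds. An LSA gives the left multiplication map $L\colon \mathfrak{g}\to\mathfrak{gl}(\mathfrak{g})$, $L(a)b=ab$, which is a representation. Left-symmetry is exactly the identity $L([a,b])=[L(a),L(b)]$. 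The identity map is then a bijective $1$-cocycle $\mathfrak{g}\to\mathfrak{g}_L$, since $[a,b]=L(a)b-L(b)a$.

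Next, $R=\mathrm{ad}-L$ satisfies $\mathrm{tr}\,R(a)=-\mathrm{tr}\,L(a)$, because $\mathrm{tr}\,\mathrm{ad}(a)=0$. For $\mathfrak{sl}_2$ every trace form of a representation vanishes on $[\mathfrak{g},\mathfrak{g}]=\mathfrak{g}$, so $\mathrm{tr}\,L=0=\mathrm{tr}\,R$. The classical argument (Helmstetter in characteristic $0$) then produces an element $a$ with $R(a)=\mathrm{id}$, or a unit-like element. This gives $\mathrm{tr}\,R(a)=\dim\mathfrak{g}$, so $\dim\mathfrak{g}\cdot 1_{\mathbb{K}}=0$, which forces $p\mid \dim\mathfrak{g}$. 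In this paper I would simply cite \cite{burde1} for this step.

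For $p=3$, I would set up the product with structure constants. Write $xy=\sum_k c^{k}_{xy}z_k$ for $x,y,z_k\in\{E,F,H\}$; this gives $27$ unknowns. The commutator condition $xy-yx=[x,y]$, from Eq. \eqref{sl2-relations} with $2\equiv -1$, is linear and eliminates $9$ of them, leaving $18$ free parameters. The left-symmetric identity Eq. \eqref{lSID}, imposed on all triples of basis elements, becomes a system of quadratic polynomial equations in these $18$ parameters over $\mathbb{F}_3$. The plan is to compute a Gröbner basis of this ideal in characteristic $3$. This shows the ideal is proper, so solutions exist over $\mathbb{K}=\overline{\mathbb{K}}$ by Theorem \ref{nullstellensatz}. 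The primary decomposition then exhibits the irreducible components.

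Finally, I would use the action of $\mathrm{Aut}(\mathfrak{sl}_2(\mathbb{K}))$ by transport of structure to reduce each component to a normal form, giving the families $\mathcal{A}_{1,u,v}$, $\mathcal{A}_{2,\gamma}$, $\mathcal{A}_3$, $\mathcal{A}_4$. I would check non-isomorphism through invariants such as $\mathrm{tr}\,L$, the rank of $R$, and the existence of a right identity.

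The main obstacle is the classification up to isomorphism, not the existence. The variety has positive-dimensional components, and identifying orbits of the $3$-dimensional automorphism group requires real care. I would try first to normalize using the eigenvalues of $L(H)$ with respect to a torus, and only then split into cases according to whether $R(H)$ is semisimple or nilpotent.
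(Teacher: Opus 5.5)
\paragraph{Non-existence for $p>3$.} Your headline step is logically sufficient: you cite Burde's theorem that a classical simple Lie algebra in characteristic $p>3$ with a compatible LSA has $p\mid\dim\mathfrak g$. This is exactly the remark the paper makes before the theorem. The paper's proof works one level lower, though, and your explanatory sketch omits precisely the step it supplies.

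A right identity is not produced by ``the classical argument (Helmstetter)''. It exists because the $1$-cocycle $\mathrm{id}\colon\mathfrak g\to M_L$ is a coboundary, i.e.\ $x=L(x)e=xe$ for some $e$, and that requires $H^1(\mathfrak g,M_L)=0$. In characteristic $p$ there is no Whitehead lemma, so this vanishing is the real content. The paper obtains it in three steps:
\begin{itemize}
\item Dzhumadil'daev's theorem: for irreducible $E$, $H^1(\mathfrak{sl}_2(\mathbb K),E)\neq 0$ only when $\dim E=p-1$;
\item every composition factor of a $3$-dimensional module has dimension $\le 3<p-1$ when $p\ge 5$;
\item the long exact sequence then gives $H^1(\mathfrak g,M)=0$.
\end{itemize}
It then feeds $H^1(\mathfrak g,M_\lambda)=0$ into Burde's Proposition~\ref{prop-burde}. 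In your sketch, $p>3$ enters only in the final divisibility, whereas in the actual argument it is what makes the cocycle trivial. A minor point: with $R=\mathrm{ad}-L$ you have minus right multiplication, so a right identity gives $R(e)=-\mathrm{id}$. The conclusion $\dim\mathfrak g\cdot 1_{\mathbb K}=0$ is unaffected.

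\paragraph{The case $p=3$.} Here you take a genuinely different route from the paper, which does not derive the classification at all. It cites Burde's list and checks the four tables directly (Section~\ref{Verif Burde LSAs} and the Appendix). Existence therefore needs no Gr\"obner computation: verifying a single table, e.g.\ $\mathcal A_{2,\gamma}$, suffices.

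Your count $27-9=18$ of free structure constants is right. A Gr\"obner, decomposition and orbit analysis could in principle reprove Burde's classification, but as written it is a program rather than a proof. The decisive step is not carried out: showing that every compatible structure is $\mathrm{Aut}(\mathfrak{sl}_2(\mathbb K))$-equivalent to a member of one of the four families, and that distinct families give non-isomorphic algebras.

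There is also a concrete pitfall. A primary decomposition over $\mathbb F_3$ cannot separate components defined only over $\mathbb F_9$. Since $\mathcal A_4$ requires $\xi^2=-1$, which has no solution in $\mathbb F_3$, you would need an absolute decomposition over $\overline{\mathbb F}_3$. Unless you complete that analysis, the classification clause has to rest on \cite{burde1}, exactly as in the paper.
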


\begin{proof}
    If $p=3$, then $\mathfrak{g}$ admits left-symmetric structures. These are the algebras defined by the matrices $\lambda_{E}, \lambda_{F}, \lambda_{H}$ seen later in Eqs. \eqref{lss1}, \eqref{lss2}, \eqref{lss3}, and \eqref{lss4}. 
    
    Now, assume that $\mathfrak{g}$ admits a left-symmetric structure $M_{\lambda}$ and that $p > 3$. Any left-symmetric structure on $\mathfrak{g}$ induces a Lie algebra representation
    \[
        \lambda \colon  \mathfrak{g} \to \mathrm{End}(\mathfrak{g}), \quad x \mapsto \lambda_{x},
    \]
    \noindent
    where $\lambda_{x}(y) \coloneqq x*y$ is the left multiplication by $x$. The vector space $\mathfrak{g}$, equipped with the action $x*v \coloneqq \lambda_x(v)$, therefore becomes a $\mathfrak{g}$-module of dimension $3$, which we denote by $M_{\lambda}$. More information on left-symmetric $\mathfrak{g}$-modules can be found in \cite{burde1}.
    
    Since every left-symmetric structure over $\mathbb{K}$ can be defined over the algebraic closure of $\mathbb{K}$, it is enough to prove the following:
    \begin{enumerate}
        \setlength{\itemsep}{0pt}   % Space between items
        \setlength{\parskip}{0pt}   % Space between paragraphs in items
        \setlength{\topsep}{0pt}
        \item [(1)] Let $\mathbb{K}$ be an algebraically closed field of characteristic $p > 3$ and $M$ be any $3$-dimensional $\mathfrak{g}$-module. Then, $H^{1}(\mathfrak{g}, M) = 0$.
    \end{enumerate}

    \noindent
    To establish (1), consider the following theorem by Dzhumadil'daev \cite{dzhumadildaev-cohomology} for $\mathfrak{g} \coloneqq \mathfrak{sl}_{2}(\mathbb{K})$ (Theorem 4): if $\mathbb{K}$ is an algebraically closed field and $E$ is an irreducible $\mathfrak{g}$-module, then
    \begin{enumerate}
        \setlength{\itemsep}{0pt}   % Space between items
        \setlength{\parskip}{0pt}   % Space between paragraphs in items
        \setlength{\topsep}{0pt}
        \item [(2)] $H^{1}(\mathfrak{g}, E) \cong \begin{cases}
            \mathbb{K} \oplus \mathbb{K}, &\text{if dim}(E) = p -1,\\ 0, & \text{otherwise.}
        \end{cases}$
    \end{enumerate}
    Consider a composition series for $M$. All irreducible composition factors have dimension at most $3$. Given that $p-1 > 3$, none of these  composition factors have dimension $p-1$, thus all factors have trivial $1$-cohomology by (2). Using the long exact sequence then yields $H^{1}(\mathfrak{g}, M) = 0$, establishing (1).

    Consider now the following proposition (Proposition 1.2.8) found in \cite{burde1}
    
    \begin{proposition} \label{prop-burde}
        Let $\mathfrak{g}$ be a Lie algebra over $\mathbb{K}$ such that $[\mathfrak{g}, \mathfrak{g}] = \mathfrak{g}$ and assume that $\mathfrak{g}$ admits a left-symmetric structure $M_{\lambda}$. Then, $H^{1}(\mathfrak{g}, M_{\lambda}) = 0$ implies $p > 0$ and $p \mid \mathrm{dim}(\mathfrak{g})$.
    \end{proposition}
    
    \noindent
    Applying (1) to the specialized module $M = M_{\lambda}$ gives $H^{1}(\mathfrak{g}, M_{\lambda}) = 0$. Knowing $[\mathfrak{g}, \mathfrak{g}] = \mathfrak{g}$, where $\mathfrak{g} = \mathfrak{sl}_{2}(\mathbb{K})$, Proposition \ref{prop-burde}, forces $p \mid \mathrm{dim}(\mathfrak{g}) = 3$, contradicting $p > 3$. 
\end{proof}

The proof relies on a cohomological analysis associated with $\mathfrak{sl}_2(\mathbb{K})$ in characteristic $3$. Burde constructs explicit examples of such structures in this modular setting, demonstrating the contrast with the characteristic $0$ case, where $\mathfrak{sl}_2(\mathbb{K})$ admits no left-symmetric algebra structures. Given the complexity of the proof, we will not focus on its details. See \cite{burde1} and Chapter 4 of \cite{burde-thesis} for more details.

If $\mathbb{K}$ is an algebraically closed field of characteristic 3, it is possible to classify all left-symmetric structures on $\mathfrak{sl}_2(\mathbb{K})$, as seen in Theorem \ref{burde-theorem-specialized}. Letting $(\mathcal{A}, \lambda)$ be a left-symmetric algebra with associated Lie algebra $\mathfrak{sl}_2(\mathbb{K})$, $(\mathcal{A}, \lambda)$ is isomorphic to one of the following algebras: (i) $\mathcal{A}_{1,u,v}$, (ii) $\mathcal{A}_{2,\gamma}$, (iii) $\mathcal{A}_{3}$, or (iv) $\mathcal{A}_{4}$. These structures are defined by the matrices $\lambda_{E}, \lambda_{F}, \text{ and }\lambda_{H}$ given below.

% \begin{equation}
%     \begin{pmatrix} 0 & u & 1 \\ 0 & w & -1 \\ 1 & w^2 + 1 & -w\end{pmatrix},  \begin{pmatrix} u & u - vw^2 & vw \\ w & v & 1 \\ w^2 & -vw & -u -v\end{pmatrix}, \begin{pmatrix} 0 & vw & -v \\ -1 & -1 & 0 \\ -w & -u -v & 1 \end{pmatrix},
%     \label{lss1}
% \end{equation}

% \begin{equation}
%     \begin{pmatrix} 0 & 0 & \gamma \\ 0 & 0 & 0 \\ 0 & -1 -\gamma^{-1} & 0
%     \end{pmatrix}, \begin{pmatrix} 0 & 0 & 0 \\ 0 & 0 & \gamma \\ 1 -\gamma^{-1} & 0 & 0 \end{pmatrix}, \begin{pmatrix} \gamma - 1 & 0 & 0 \\ 0 & \gamma + 1 & 0 \\ 0 & 0 & \gamma \end{pmatrix},
%     \label{lss2}
% \end{equation}

% \begin{equation}
%     \begin{pmatrix}
%         0 & 0 & -1 \\
%         0 & 0 & 0 \\
%         0 & 0 & 0
%     \end{pmatrix},
%     \begin{pmatrix}
%         0 & 1 & 1 \\
%         0 & 0 & -1 \\
%         -1 & -1 & 0
%     \end{pmatrix},
%     \begin{pmatrix}
%         1 & 1 & 0 \\
%         0 & 0 & 0 \\
%         0 & 0 & -1
%     \end{pmatrix}
%     \label{lss3}
% \end{equation}

% \begin{equation}
%     \begin{pmatrix}
%         0 & 0 & \epsilon \\
%         0 & 0 & 0 \\
%         0 & \epsilon -1 & 0
%     \end{pmatrix},
%     \begin{pmatrix}
%         0 & 1 & 0 \\
%         0 & 0 & \epsilon \\
%         \epsilon + 1 & 0 & 0
%     \end{pmatrix}, 
%     \begin{pmatrix}
%         \epsilon - 1 & 0 & 0 \\
%         0 & \epsilon + 1 & 0 \\
%         0 & 0 & \epsilon
%     \end{pmatrix},
%     \label{lss4}
% \end{equation}

\begin{alignat}{3}
    &\begin{pmatrix} 0 & u & 1 \\ 0 & w & -1 \\ 1 & w^2 + 1 & -w\end{pmatrix} &
    \quad &\begin{pmatrix} u & u - vw^2 & vw \\ w & v & 1 \\ w^2 & -vw & -u -v\end{pmatrix} &
    \quad &\begin{pmatrix} 0 & vw & -v \\ -1 & -1 & 0 \\ -w & -u -v & 1 \end{pmatrix},
    \label{lss1} \\[10pt]
    &\begin{pmatrix} 0 & 0 & \gamma \\ 0 & 0 & 0 \\ 0 & -1 -\gamma^{-1} & 0 \end{pmatrix} &
    \quad &\begin{pmatrix} 0 & 0 & 0 \\ 0 & 0 & \gamma \\ 1 -\gamma^{-1} & 0 & 0 \end{pmatrix} &
    \quad &\begin{pmatrix} \gamma - 1 & 0 & 0 \\ 0 & \gamma + 1 & 0 \\ 0 & 0 & \gamma \end{pmatrix},
    \label{lss2} \\[10pt]
    &\begin{pmatrix} 0 & 0 & -1 \\ 0 & 0 & 0 \\ 0 & 0 & 0 \end{pmatrix} &
    \quad &\begin{pmatrix} 0 & 1 & 1 \\ 0 & 0 & -1 \\ -1 & -1 & 0 \end{pmatrix} &
    \quad &\begin{pmatrix} 1 & 1 & 0 \\ 0 & 0 & 0 \\ 0 & 0 & -1 \end{pmatrix},
    \label{lss3} \\[10pt]
    &\begin{pmatrix} 0 & 0 & \xi \\ 0 & 0 & 0 \\ 0 & \xi -1 & 0 \end{pmatrix} &
    \quad &\begin{pmatrix} 0 & 1 & 0 \\ 0 & 0 & \xi \\ \xi + 1 & 0 & 0 \end{pmatrix} &
    \quad &\begin{pmatrix} \xi - 1 & 0 & 0 \\ 0 & \xi + 1 & 0 \\ 0 & 0 & \xi \end{pmatrix},
    \label{lss4}
\end{alignat}

\noindent
such that $w \coloneqq v - u$, $u, v \in \mathbb{K}$, and $\gamma \in \mathbb{K}^\times, \gamma^{2} \neq -1$ and $\xi^{2} = -1$. The matrices in Eq. \eqref{lss1} correspond to structure $\mathcal{A}_{1,u,v}$, those in Eq. \eqref{lss2} to structure $\mathcal{A}_{2,\gamma}$, and so forth. In Section \ref{Verif Burde LSAs}, we will explicitly represent these structures.

\begin{remark}
    Note that for structure $\mathcal{A}_4$, Burde \cite{burde1} uses the parameter $\beta$ instead of $\xi$. To avoid confusion with the $\beta$-parameters introduced in Section \ref{Computation}, we use $\xi$ instead for the relation $\xi^{2} = -1$.
\end{remark}

\subsection{Left-Symmetric Structures on \texorpdfstring{$\mathfrak{osp}(1|2)$}{osp(1|2)}.} \label{Existence LSS on osp(1|2)}

This capstone project aims to investigate the existence of left-symmetric structures on the Lie superalgebra $\mathfrak{osp}(1|2)$. Since $\mathfrak{osp}(1|2)_{\bar{0}} \cong \mathfrak{sl}_2(\mathbb{K})$, any compatible left-symmetric superalgebra structure on $\mathfrak{osp}(1|2)$ would restrict to a compatible left-symmetric algebra structure on its even part. Thus, Burde's classification in characteristic $3$ provides the possible even-part structures that one can attempt to extend to the full Lie superalgebra $\mathfrak{osp}(1|2)$. The main question of this project is whether any of these four families can be extended to the odd part of $\mathfrak{osp}(1|2)$ in a way that satisfies the graded left-symmetric identity.

The questions I aim to address are:
\begin{enumerate}
    \setlength{\itemsep}{0pt}   % Space between items
    \setlength{\parskip}{0pt}   % Space between paragraphs in items
    \setlength{\topsep}{0pt}
    \item [(i)] Does $\mathfrak{osp}(1|2)$ admit a left-symmetric superalgebra?
    \item [(ii)] Can Burde's approach for $\mathfrak{sl}_2(\mathbb{K})$ in \cite{burde1} be modified or extended to $\mathfrak{osp}(1|2)$?
    \item [(iii)] What constraints do the super-Jacobi identity and parity considerations impose on potential left-symmetric structures on $\mathfrak{osp}(1|2)$?
\end{enumerate}

In developing this project, I plan to check if Burde's left-symmetric algebras are compatible with the odd part of the Lie superalgebra $\mathfrak{osp}(1|2)$. The earlier isomorphism $\mathfrak{osp}(1|2)_{\bar0} \cong \mathfrak{sl}_2(\mathbb{K})$ suggests that the structure of $\mathfrak{osp}(1|2)$ may admit richer left-symmetric superalgebras. 

\subsection{Verification of Burde's Left-Symmetric Algebras.} \label{Verif Burde LSAs}
Note that Burde's matrices $\lambda_{E},$ $ \ \lambda_{F},$ and $\lambda_{H}$ are all in $\mathrm{End}(\mathfrak{sl}_{2}(\mathbb{K}))$. Specifically, they are left-multiplication operators of the Lie algebra $\mathfrak{sl}_{2}(\mathbb{K})$, defined by the $\mathfrak{sl}_{2}(\mathbb{K})$-representation
\[
   \lambda \colon \mathfrak{sl}_{2}(\mathbb{K}) \to \mathrm{End}(\mathfrak{sl}_{2}(\mathbb{K})), \ X \mapsto \lambda_{X}, \ \forall X \in \mathfrak{sl}_{2}(\mathbb{K}).
\]

% left-symmetric product is equivalent to g-module multiplication
\noindent
We can rewrite these explicitly in terms of the $\langle E,F, H\rangle$ basis. Burde encodes the left multiplication $X * Y$ through the $\mathfrak{sl}_{2}(\mathbb{K})$-module $\lambda_{X}$ (the left operator). This means each matrix $\lambda_{X}$ acts on the coordinate vector of $Y$. However, first we must express the vector $Y$ as a coordinate vector in terms of the basis $\langle E, F, H\rangle$. To do so, define the coordinate mapping $\Phi \colon \mathfrak{sl}_{2}(\mathbb{K}) \to \mathbb{K}^3$ such that for any linear combination $aE + bF + cH$, 
\[
    aE + bF + cH \mapsto (a, b, c)^T,
\]
\noindent
for all $a, b, c \in \mathbb{K}$. The bilinear left multiplication is defined by 
\[
    * \colon \mathfrak{sl}_{2}(\mathbb{K}) \times \mathfrak{sl}_{2}(\mathbb{K}) \to \mathfrak{sl}_{2}(\mathbb{K}), \ (X,Y) \mapsto X*Y,
\]

\noindent
for all $X,Y$ in $\mathfrak{sl}_{2}(\mathbb{K})$. Using Burde's method, we can represent the same left product through a coordinate representation. By fixing the basis $\langle E, F, H\rangle$, we identify $\mathfrak{sl}_{2}(\mathbb{K})$ via the linear coordinate isomorphism $\Phi$. For each $X \in \mathfrak{sl}_{2}(\mathbb{K})$, the left product
\[
    \lambda_{X} \colon \mathfrak{sl}_{2}(\mathbb{K}) \to \mathfrak{sl}_{2}(\mathbb{K}), \ \lambda_X(Y) \coloneq X*Y,
\]
defines a linear endomorphism of $\mathfrak{sl}_{2}(\mathbb{K})$ such that in coordinate representation, the left multiplication is given by
\[
    X*Y \coloneq \Phi^{-1}(\lambda_{X} \Phi(Y)).
\]

\noindent 
Indeed, for all $X, Y, Z \in \mathfrak{sl}_{2}(\mathbb{K})$, bilinearity gives $X*(aY +bZ) = a(X*Y) + b(X*Z)$ for all $a, b \in \mathbb{K}$.
\begin{remark}
    Note that $\mathfrak{sl}_{2}(\mathbb{K})$ and $\mathbb{K}^{3}$ are isomorphic as vector spaces. That is, $\Phi$ is only an isomorphism of vector spaces \cite{burde1}.
\end{remark}

\noindent
These relations can be summarized in the commutative diagram below.

\[
\begin{tikzcd}[row sep=large, column sep=huge]
\mathfrak{sl}_{2}(\mathbb{K}) \otimes \mathfrak{sl}_{2}(\mathbb{K})
  \arrow[r, "{\lambda \otimes \mathrm{id}}"]
  \arrow[dr, swap, "{*}"]
&
\mathrm{End}(\mathfrak{sl}_{2}(\mathbb{K}))\otimes \mathfrak{sl}_{2}(\mathbb{K}) \arrow[d, "{\mathrm{ev}}"]
\\
&
\mathfrak{sl}_{2}(\mathbb{K})
\end{tikzcd}
\]

\noindent
where $\mathrm{ev} \colon \mathrm{End}(\mathfrak{sl}_{2}(\mathbb{K})) \otimes \mathfrak{sl}_{2}(\mathbb{K}) \to \mathfrak{sl}_{2}(\mathbb{K})$ is the evaluation map defined by $T \otimes X \mapsto T(X)$, where $T$ is an endomorphism and $X \in \mathfrak{sl}_{2}(\mathbb{K})$.

Using these coordinate representations, we obtain the explicit products seen in Tables \ref{table1-reps} through \ref{table4-reps}. 

\begin{table}[H]
\centering
\begin{tabular}{cccc}
\hline
$*$ & $E$ & $F$ & $H$ \\
\hline
$E$ 
  & $H$ 
  & $uE + wF + (w^2+1)H$ 
  & $E - F - wH$ \\
$F$ 
  & $uE + wF + w^2H$ 
  & $(u - vw^2)E + vF - vwH$ 
  & $vwE + F - (u + v)H$ \\
$H$ 
  & $-F - wH$ 
  & $vwE - F - (u + v)H$ 
  & $-vE + H$ \\
\hline
\end{tabular} 
\caption{Explicit representation of Burde's left-symmetric structure $\mathcal{A}_{1,u,v}$ in terms of the $\langle E,F,H\rangle$ basis.}
\label{table1-reps}
\end{table}

\begin{table}[H]
\centering
\begin{tabular}{cccc}
\hline
$*$
 & $E$ & $F$ & $H$ \\
\hline
$E$ 
  & $0$ 
  & $(-1 -\gamma^{-1})H$ 
  & $\gamma E$ \\
$F$ 
  & $(1 - \gamma^{-1})H$ 
  & $0$ 
  & $\gamma F$ \\
$H$ 
  & $(\gamma - 1)E$ 
  & $(\gamma + 1)F$ 
  & $\gamma H$ \\
\hline
\end{tabular}
\caption{Explicit representation of Burde's left-symmetric structure $\mathcal{A}_{2,\gamma}$ in terms of the $\langle E,F,H\rangle$ basis.}
\label{table2-reps}
\end{table}

\begin{table}[H]
\centering
\begin{tabular}{cccc}
\hline
$*$
 & $E$ & $F$ & $H$ \\
\hline
$E$ 
  & $0$ 
  & $0$ 
  & $-E$ \\
$F$ 
  & $-H$ 
  & $E - H$ 
  & $E - F$ \\
$H$ 
  & $E$ 
  & $E$ 
  & $-H$ \\
\hline
\end{tabular}
\caption{Explicit representation of Burde's left-symmetric structure $\mathcal{A}_{3}$ in terms of the $\langle E,F,H\rangle$ basis.}
\label{table3-reps}
\end{table}

\begin{table}[H]
\centering
\begin{tabular}{cccc}
\hline
$*$
 & $E$ & $F$ & $H$ \\
\hline
$E$ 
  & $0$ 
  & $(\xi -1)H$ 
  & $\xi E$ \\
$F$ 
  & $(\xi + 1)H$ 
  & $E$ 
  & $\xi F$ \\
$H$ 
  & $(\xi - 1)E$ 
  & $(\xi + 1)F$ 
  & $\xi H$ \\
\hline
\end{tabular}
\caption{Explicit representation of Burde's left-symmetric structure $\mathcal{A}_4$ in terms of the $\langle E,F,H\rangle$ basis.}
\label{table4-reps}
\end{table}

Through these representations, we can verify whether $\mathfrak{sl}_{2}(\mathbb{K})$ admits Burde's structures. To do so, we must ensure that the Lie bracket associated with the left multiplication $[\cdot \mathbin{,} \cdot]_{*}$ recovers the original Lie bracket on $\mathfrak{sl}_{2}(\mathbb{K})$ itself. Furthermore, the structures must satisfy the left-symmetric identity seen in Eq. \eqref{lSID}. 

Indeed, we recover all original Lie bracket relations through the associated Lie bracket. Knowing that $\mathbb{K}$ is a field of characteristic 3, for the first structure $\mathcal{A}_{1, u, v}$ we have 
\begin{align*}
    [E,F]_{*} &= E*F - F*E = uE + wF + (w^2 + 1)H - (uE + wF + w^2H) = -2H = H, \\
    [H,E]_{*} &= H*E - E*H = -F -wH - (E-F-wH) = -E = 2E, \\
    [H,F]_{*} &= H*F - F*H = vwE -F -uH -vH -vwE -F +uH +vH = -2F.
\end{align*}
For the second structure $\mathcal{A}_{2, \gamma}$, we have the same result
\begin{align*}
    [E,F]_{*} &= E*F - F*E = (-1 - \gamma^{-1})H -(1 - \gamma^{-1})H = -2H = H, \\ [H,E]_{*} &= H*E - E*H = (\gamma - 1)E - \gamma E = -E = 2E, \\
    [H,F]_{*} &= H*F - F*H = (\gamma + 1)F - \gamma F = \gamma F + F - \gamma F = F = -2F.
\end{align*}
For the exceptional structures, we obtain the same result. For $\mathcal{A}_{3}$, we see that 
\begin{align*}
    [E,F]_{*} &= E*F - F*E = H, \\
    [H,E]_{*} &= H*E - E*H = E - (-E) = 2E, \\
    [H,F]_{*} &= H*F - F*H = E - (E - F) = F = -2F.
\end{align*}
For the fourth structure $\mathcal{A}_{4}$, we have
\begin{align*}
    [E,F]_{*} &= E*F - F*E = (\xi - 1)H - (\xi + 1)H = -2H = H, \\
    [H,E]_{*} &= H*E - E*H = (\xi - 1)E - \xi E = -E = 2E, \\
    [H,F]_{*} &= H*F - F*H = (\xi + 1)F - \xi F = F = -2F.
\end{align*}

Lastly, the left-symmetric identity is also fulfilled for all triples $X,Y,Z \in \langle E, F, H \rangle$. By multilinearity of the associator, this is enough to establish the identity for all elements $X, Y, Z \in \mathfrak{sl}_{2}(\mathbb{K})$. That is, we have that $(X,Y,Z) = (Y,X,Z)$, even after accounting for the characteristic of $\mathbb{K}$. Some example calculations are provided below for reference. 

For structure $\mathcal{A}_{1,u,v}$, we have the following example calculations
\begin{align*}
    (H,F,H) &= (H*F)*H - H*(F*H) \\ &= (vwE - F - (u + v)H)*H - H*(vwE + F - (u + v)H)\\ &= v(u-v)E + (2u + 2v)H,
\end{align*}
\noindent
and
\begin{align*}
    (F,H,H) &= (F*H)*H - F*(H*H) \\
    &= (vwE + F - (u + v)H)*H - F*(-vE + H) \\ &= v(u + 2v)E - (u+v)H.
\end{align*}

\noindent
We then have $(H,F,H) - (F,H,H) = -3v^{2}E + 3(u+v)H = 0$.

\noindent
Similarly, 
\begin{align*}
    (E,F,E) &= (E*F)*E - E*(F*E) \\
    &= (uE + wF + (w^2+1)H)*E - E*(uE + wF + w^2H) \\ &= -(u-v)^{2}E - F + (2u - 2v)H,
\end{align*}
\noindent
and
\begin{align*}
    (F,E,E) &= (F*E)*E - F*(E*E) \\ &= (uE + wF + w^2H)*E - F*H \\ &= -(u-v)^{2}E - F +(2u + v)H.
\end{align*}

\noindent
Hence, $(E,F,E) - (F,E,E) = -3vH = 0$.

For structure $\mathcal{A}_{2,\gamma}$, 
\begin{align*}
    (E, F, H) &= (E*F)*H - E*(F*H) = (-1 - \gamma^{-1})H * H - \gamma(E*F) \\ 
    &= (-1 - \gamma^{-1})\gamma H - (-1 - \gamma^{-1})\gamma H = 0,
\end{align*}
\noindent
and 
\begin{align*}
    (F,E,H) &= (F*E)*H - F*(E*H) = (1 - \gamma^{-1})H*H - \gamma(F*E) \\
    &= (1 - \gamma^{-1})\gamma H - (1 - \gamma^{-1})\gamma H = 0.
\end{align*}
\noindent
Therefore, $(E,F,H) = (F,E,H)$.

For structures $\mathcal{A}_{3}$ and $\mathcal{A}_{4}$, we have the same results. The example computations for these have been omitted as they are very similar to the previous structures. For the complete table with the respective calculations, see Appendix \ref{Appendix 1: Tables}.

\section{Superization of Burde's Left-Symmetric Structures} \label{Superization LSS}

We want to determine whether it is possible to extend Burde's structures to the Lie superalgebra $\mathfrak{osp}(1|2)$. Note that this Lie superalgebra is of superdimension $3|2$ with graded basis $\langle E, F, H \mid T_{+}, T_{-} \rangle$. Hence, we must define the left product $*$ such that

\begin{enumerate}
    \setlength{\itemsep}{0pt}   % Space between items
    \setlength{\parskip}{0pt}   % Space between paragraphs in items
    \setlength{\topsep}{0pt}
    \item [(i)] The supercommutator associated with the left multiplication, defined by 
    \begin{equation}
        [X,Y]_{*} = X*Y - (-1)^{\abs{X}\abs{Y}}Y*X
        \label{assoc_supercomm},
    \end{equation}
    \noindent for all homogeneous $X,Y \in \mathfrak{osp}(1|2)$ reproduces the Lie superalgebra $\mathfrak{osp}(1|2)$ relations in Eqs. \eqref{odd_rela} and \eqref{even_odd_rela};

    \item [(ii)] The $\mathbb{Z}/2\mathbb{Z}$-graded left-symmetric identity
    \begin{equation}
        (X, Y, Z) = (-1)^{\abs{X} \abs{Y}}(Y, X, Z),
        \label{left_sym_id}
    \end{equation}
    \noindent
    holds for all homogeneous $X, Y, Z \in \mathfrak{osp}(1|2)$. That is, the associator is supersymmetric in both $X$ and $Y$.
\end{enumerate}
\noindent
These are what we later define as Stages 1 and 2, respectively. 

Thus, define a similar coordinate isomorphism $\widehat{\Phi} \colon \mathfrak{osp}(1|2) \to \mathbb{K}^5$ sending to the standard basis in $\mathbb{K}^5$ relative to $\langle E, F, H \mid T_{+}, T_{-}\rangle$ such that for any linear combination $aE + bF + cH + dT_{+} + eT_{-}$,
\[
    aE + bF + cH + dT_{+} + eT_{-} \mapsto (a, b, c, d, e)^{T},
\]

\noindent
for all $a, b, c, d, e \in \mathbb{K}$.
For all homogeneous $X \in \mathfrak{osp}(1|2)$, define a representation 
\[
    \eta \colon \mathfrak{osp}(1|2) \to \mathrm{End}(\mathfrak{osp}(1|2)), \ X \mapsto \eta_{X}, \forall X \in \mathfrak{osp}(1|2).
\]

\noindent
The bilinear left multiplication is defined similarly to the normal case by 
\[
    * \colon \mathfrak{osp}(1|2) \times \mathfrak{osp}(1|2) \to \mathfrak{osp}(1|2), \ (X, Y) \mapsto X*Y,
\]

\noindent
for all $X,Y \in \mathfrak{osp}(1|2)$. Moreover, for all $Y \in \mathfrak{osp}(1|2)$, define the [superized] left multiplication operator by 
\[
    \eta_{X} \colon \mathfrak{osp}(1|2) \to \mathfrak{osp}(1|2), \ \eta_{X}(Y) \coloneq X*Y.
\]

\noindent
Once again, fixing a homogeneous basis and using the associated coordinate isomorphism $\widehat{\Phi}$, we can define the [superized] left multiplication in coordinate representation as follows
\[
    X * Y \coloneq \widehat{\Phi}^{-1}(\eta_{X} \widehat{\Phi}(Y)).
\]

\begin{remark}
    When restricted to the even part $\mathfrak{sl}_{2}(\mathbb{K})$, the product $*$ defines an ordinary left-symmetric algebra. When extended to $\mathfrak{osp}(1|2)$, this product defines a left-symmetric superalgebra product. All identities here are understood in the graded sense. 
\end{remark}

Relative to the decomposition $\mathfrak{osp}(1|2) = \mathfrak{sl}_2(\mathbb{K}) \oplus \mathfrak{osp}(1|2)_{\bar1}$, every left-symmetric superalgebra in $M_{5}(\mathbb{K})$ can be understood as $2 \times 2$ block matrices of $(3+2)\times(3+2)$ dimension
\begin{equation}
    \eta_{X} \cong \begin{pmatrix}
        A_{X} & B_{X} \\ C_{X} & D_{X}
    \end{pmatrix}, \label{osp-ref}
\end{equation}

\noindent
such that $A_{X}, B_{X}, C_{X}, \ \text{and} \ D_{X}$ are $3\times3$, $3\times2$, $2\times3$, and $2\times2$ endomorphisms, respectively, where

\begin{enumerate}
    \setlength{\itemsep}{0pt}   % Space between items
    \setlength{\parskip}{0pt}   % Space between paragraphs in items
    \setlength{\topsep}{0pt}
    \item [(1)] $A_{X} \colon \mathfrak{sl}_2(\mathbb{K}) \to \mathfrak{sl}_2(\mathbb{K})$,
    \item [(2)] $B_{X} \colon \mathfrak{osp}(1|2)_{\bar1} \to \mathfrak{sl}_2(\mathbb{K})$,
    \item [(3)] $C_{X} \colon \mathfrak{sl}_2(\mathbb{K}) \to \mathfrak{osp}(1|2)_{\bar1}$,
    \item [(4)] $D_{X} \colon \mathfrak{osp}(1|2)_{\bar1} \to \mathfrak{osp}(1|2)_{\bar1}$.
\end{enumerate}

\begin{remark}
    Note that the relationship between $\eta_{X}$ and the $2 \times 2$ block matrix in Eq. \eqref{osp-ref} is an isomorphism of vector spaces. Letting $\mathfrak{g} = \mathfrak{osp}(1|2)$, the direct sum decomposition of $\mathfrak{g}$ induces the identification
    \[
    \mathrm{End}(\mathfrak{g}_{\bar{0}} \oplus \mathfrak{g}_{\bar{1}}) \cong \begin{pmatrix}
        \mathrm{Hom}(\mathfrak{g}_{\bar{0}}, \mathfrak{g}_{\bar{0}}) & \mathrm{Hom}(\mathfrak{g}_{\bar{1}}, \mathfrak{g}_{\bar{0}}) \\ 
        \mathrm{Hom}(\mathfrak{g}_{\bar{0}}, \mathfrak{g}_{\bar{1}}) &  \mathrm{Hom}(\mathfrak{g}_{\bar{1}}, \mathfrak{g}_{\bar{1}})
    \end{pmatrix},
    \]

    \noindent
    where each endomorphism of $\mathfrak{g}$ is uniquely determined by its four components mapping between the even and odd subspaces. These precisely correspond to the $A,B,C,D$ endomorphisms. 
\end{remark}

For the even generators $\langle E, F, H\rangle$, the $A$-matrices are already given by Burde in \cite{burde-thesis}. However, we must find the corresponding $D$-matrices describing the action of the even generators on the odd subspace. For the odd generators $\langle T_{+}, T_{-}\rangle$, each $\eta_{T_{\pm}}$ has off-diagonal blocks $B_{T{_{\pm}}} \in M_{3\times2}(\mathbb{K})$ and $C_{T{_{\pm}}} \in M_{2\times3}(\mathbb{K})$, which encode the odd-odd and odd-even products. These are also unknown. Hence, we must find the respective entries of
\begin{enumerate}
    \setlength{\itemsep}{0pt}   % Space between items
    \setlength{\parskip}{0pt}   % Space between paragraphs in items
    \setlength{\topsep}{0pt}
    \item [(1)] $D$-blocks for $\eta_{E}, \eta_{F}, \eta_{H}$, each with 4 entries;
    \item [(2)] $B$- and $C$-blocks for $\eta_{T_{_{\pm}}}$, each with 6 entries.
\end{enumerate}

\subsection{Stage 1: The Associated Supercommutator.} \label{Computation}

At this stage, the relations between even and odd generators are unknown. Thus, to apply the supercommutator associated with the left multiplication and establish the relation between even and odd generators, we introduce constant structures for all left products involving odd generators. We fix the $\langle E,F,H \mid T_{+}, T_{-} \rangle$ basis. For the odd-odd products, let

\[
    \begin{cases}
    T_{+} * T_{+} = \alpha_{++}^{E}E + \alpha_{++}^{F}F + \alpha_{++}^{H}H \\ 
    T_{-} * T_{-} = \alpha_{--}^{E}E + \alpha_{--}^{F}F + \alpha_{--}^{H}H \\ 
    T_{+} * T_{-} = \alpha_{+-}^{E}E + \alpha_{+-}^{F}F + \alpha_{+-}^{H}H \\ 
    T_{-} * T_{+} = \alpha_{-+}^{E}E + \alpha_{-+}^{F}F + \alpha_{-+}^{H}H. 
    \end{cases}
\]
Similarly, for the odd-even products 
\[
    \begin{cases}
    T_{+} * E = \beta_{+E}^{+}T_{+} + \beta_{+E}^{-}T_{-} \\
    T_{+} * F = \beta_{+F}^{+}T_{+} + \beta_{+F}^{-}T_{-} \\
    T_{+} * H = \beta_{+H}^{+}T_{+} + \beta_{+H}^{-}T_{-} \\
    T_{-} * E = \beta_{-E}^{+}T_{+} + \beta_{-E}^{-}T_{-} \\
    T_{-} * F = \beta_{-F}^{+}T_{+} + \beta_{-F}^{-}T_{-} \\
    T_{-} * H = \beta_{-H}^{+}T_{+} + \beta_{-H}^{-}T_{-}. 
    \end{cases}
\]
Lastly, for the even-odd products, 

\[
    \begin{cases}
    E * T_{+} = \delta_{E+}^{+}T_{+} + \delta_{E+}^{-}T_{-} \\
    E * T_{-} = \delta_{E-}^{+}T_{+} + \delta_{E-}^{-}T_{-} \\
    F * T_{+} = \delta_{F+}^{+}T_{+} + \delta_{F+}^{-}T_{-} \\
    F * T_{-} = \delta_{F-}^{+}T_{+} + \delta_{F-}^{-}T_{-} \\
    H * T_{+} = \delta_{H+}^{+}T_{+} + \delta_{H+}^{-}T_{-} \\
    H * T_{-} = \delta_{H-}^{+}T_{+} + \delta_{H-}^{-}T_{-}.
    \end{cases}
\]

For example, if we consider the endomorphisms $D_{E} \in M_{2}(\mathbb{K}),  \ B_{T_{+}} \in M_{3 \times 2}(\mathbb{K})$ and $C_{T_{-}} \in M_{2 \times 3}(\mathbb{K})$, we have
\begin{equation}
    D_E = \begin{pmatrix} \delta_{E+}^{+} & \delta_{E+}^{-} \\
    \delta_{E-}^{+} & \delta_{E-}^{-}
    \end{pmatrix}, \ 
    B_{T_{+}} = \begin{pmatrix} \alpha_{++}^{E} & \alpha_{+-}^{E} \\ 
    \alpha_{++}^{F} & \alpha_{+-}^{F} \\ \alpha_{++}^{H} & \alpha_{+-}^{H} \end{pmatrix}, \ C_{T_{-}} = \begin{pmatrix}
        \beta_{-E}^{+} & \beta_{-F}^{+} & \beta_{-H}^{+} \\ \beta_{-E}^{-} & \beta_{-F}^{-} & \beta_{-H}^{-}
    \end{pmatrix}.
\end{equation}

Having introduced constants for the odd and even relations, we construct a linear system of equations using the associated supercommutator. In building this system, it suffices to impose the relation for one orientation of each unordered pair $(X,Y)$ for all homogeneous $X, Y \in \mathfrak{osp}(1|2)$, with the associated supercommutator. This is because the relation for the unordered pair $(Y,X)$ follows immediately from skew-supersymmetry. Furthermore, all nonzero products in Eqs. \eqref{odd_rela} and \eqref{even_odd_rela} are considered, but with the associated supercommutator. That is, using the product in Eq. \eqref{assoc_supercomm}, we plug in the basis pairs and equate the coefficients with the results in Eqs. \eqref{odd_rela} and \eqref{even_odd_rela}. We present an example computation of this process below.

From $\{T_{+}, T_{+}\}=2E$:
\begin{align*}
    \{T_{+}, T_{+}\}_{*} \stackrel{!}{=} 2E &\implies 2(T_{+}*T_{+}) = 2E \\ 
    &\implies 2(\alpha_{++}^{E}E + \alpha_{++}^{F}F + \alpha_{++}^{H}H) = 2E \\
    &\implies \alpha_{++}^{E} = 1, \alpha_{++}^{F} = 0, \alpha_{++}^{H} = 0.
\end{align*}

We continue this process iteratively for all nonzero pairs. Afterwards, all equations are simplified and solved for all coefficients. This process is carried out for all structures in Eqs. \eqref{lss1}, \eqref{lss2} \eqref{lss3}, and \eqref{lss4}. For a total of 45 scalar equations with 36 unknown coefficients, a single solution set was found. The system is consistent with 21 coefficients determined (parametric or non-parametric), and 15 free coefficients. Table \ref{dependent_params_table} shows the parameterization of the 21 dependent unknowns for all structures. The remaining coefficients correspond to free unknowns. Interestingly, all four structures share the same parametrizations, as the solution space of bilinear products associated with the left multiplication is independent of the choice of the left-symmetric algebra in the even subalgebra $\mathfrak{osp}(1|2)_{\bar{0}}$.

\begin{table}[!ht]
\centering
\begin{tabular}{cc}
\hline
Coefficient & Expression \\ \hline
$\delta_{E+}^{+}$ & $\beta_{+E}^{+}$ \\
$\delta_{E+}^{-}$ & $\beta_{+E}^{-}$ \\
$\delta_{E-}^{+}$ & $\beta_{-E}^{+} - 1$ \\
$\delta_{E-}^{-}$ & $\beta_{-E}^{-}$ \\
$\delta_{F+}^{+}$ & $\beta_{+F}^{+}$ \\
$\delta_{F+}^{-}$ & $\beta_{+F}^{-} - 1$ \\
$\delta_{F-}^{+}$ & $\beta_{-F}^{+}$ \\
$\delta_{F-}^{-}$ & $\beta_{-F}^{-}$ \\
$\delta_{H+}^{+}$ & $\beta_{+H}^{+} + 1$ \\
$\delta_{H+}^{-}$ & $\beta_{+H}^{-}$ \\
$\delta_{H-}^{+}$ & $\beta_{-H}^{+}$ \\
$\delta_{H-}^{-}$ & $\beta_{-H}^{-} - 1$ \\
$\alpha_{++}^{E}$ & $1$ \\
$\alpha_{++}^{F}$ & $0$ \\
$\alpha_{++}^{H}$ & $0$ \\
$\alpha_{+-}^{E}$ & $-\alpha_{-+}^{E}$ \\
$\alpha_{+-}^{F}$ & $-\alpha_{-+}^{F}$ \\
$\alpha_{+-}^{H}$ & $1 - \alpha_{-+}^{H}$ \\
$\alpha_{--}^{E}$ & $0$ \\
$\alpha_{--}^{F}$ & $-1$ \\
$\alpha_{--}^{H}$ & $0$ \\ \hline
\end{tabular}
\caption{Solutions for the coefficients expressed in terms of the free parameters.}
\label{dependent_params_table}
\end{table}

\begin{remark}
    Stage 2 of the superization---that is, the verification of the $\mathbb{Z}/2\mathbb{Z}$-graded left-symmetric identity---is not independent of the choice of left-symmetric structure. This is because the left-symmetric identity is quadratic in the constants and thus introduces nonlinearities. Moreover, the associator contains nested products; if $X,Y,Z \in \mathfrak{osp}(1|2)_{\bar{0}}$ then all products of the associator $(X,Y,Z)$ are already determined by Burde in \cite{burde1}.
\end{remark}

\subsection{Stage 2: The \texorpdfstring{$\mathbb{Z}/2\mathbb{Z}$}{Z/2Z}-Graded Left-Symmetric Identity.}\label{Stage-2}

Having fulfilled Stage 1 of the superization, we must now determine if the $\mathbb{Z}/2\mathbb{Z}$-graded left-symmetric identity holds for all homogeneous $X,Y,Z \in \mathfrak{osp}(1|2)$. To do so, we employ the associator endowed with the left multiplication. Given that $\mathfrak{osp}(1|2)$ has $\mathrm{sdim}=3|2$, Stage 2 of the superization produces a system of 625 equations for each left-symmetric structure. For structure $\mathcal{A}_{1,u,v}$, we obtain a total of 225 nonzero polynomials, of which 22 are linear polynomial equations and 203 are nonlinear polynomial equations. Subsequently, we substitute the solutions in Table \ref{dependent_params_table} from the system of linear equations imposed by the associated supercommutator. For structure $\mathcal{A}_{1,u,v}$, we obtain the following system of [nontrivial] nonlinear polynomial equations
\begin{equation}
    \{p_i(\tau_{1},\ldots,\tau_{15}) =0\}_{i=1}^{203},
    \label{sys-1}
\end{equation} 
\noindent
such that $p_{i} \in \mathbb{K}[u,v][\tau_{1},\ldots,\tau_{15}]$ for $i = 1,\ldots, 203,$ where $u,v$ correspond to the same constants of structure $\mathcal{A}_{1,u,v}$ in Eq. \eqref{lss1}.

We approach structure $\mathcal{A}_{2,\gamma}$ using the same method. We obtain a total of 207 nonzero polynomial equations, of which 4 are linear polynomial equations and 203 are nonlinear polynomial equations. After substitution of the solutions from the previous linear system of equations, we obtain the following system of [nontrivial] nonlinear polynomial equations
\begin{equation}
    \{q_i(\tau_{1},\ldots,\tau_{15}) = 0\}_{i=1}^{203},
    \label{sys-2}
\end{equation}

\noindent
where $q_{i} \in \mathbb{K}[\gamma, \gamma^{-1}][\tau_{1},\ldots,\tau_{15}]$ for $i = 1 ,\ldots, 203$, where $\gamma \in \mathbb{K}^{\times}$ corresponds to the parameter in Eq. \eqref{lss2}.

For structure $\mathcal{A}_{3}$, we obtain a total of 205 nonzero polynomial equations, of which 2 are linear and 203 are nonlinear polynomial equations. Substituting the same solutions from the linear system of equations, we obtain the following system
\begin{equation}
    \{r_{i}(\tau_{1},\ldots,\tau_{15}) = 0\}_{i = 1}^{203},
    \label{sys-3}
\end{equation}

\noindent
where $r_{i} \in \mathbb{K}[\tau_{1},\ldots,\tau_{15}]$ for $i = 1,\ldots, 203$. 

Lastly, for structure $\mathcal{A}_{4}$, we obtain 207 nonzero polynomial equations, of which 4 are linear and 203 are nonlinear. Once again, after having substituted the solutions of Table \ref{dependent_params_table} into this polynomial system, we obtain 
\begin{equation}
    \{s_{i}(\tau_{1}, \ldots, \tau_{15}) = 0 \}_{i=1}^{203},
    \label{sys4}
\end{equation}

\noindent
such that $s_{i} \in \mathbb{K}[\xi][\tau_{1}, \ldots, \tau_{15}]$ for $i=1,\ldots,203$, where $\xi^{2} = -1$.

\begin{remark}
 The variables $\tau_{i}$ in each system correspond to the remaining 15 free structure constants from Stage 1. We express them in this way for convenience. Such a correspondence is presented in Table \ref{tau_correspondence}. 
\end{remark}

\begin{table}[H]
\centering
\setlength{\tabcolsep}{10pt}
\begin{tabular}{ccc}
\hline
Variable & Code symbol & Coefficient \\ \hline
$\tau_1$ & \texttt{b\_Tm\_E\_Tm} & $\beta_{-E}^{-}$ \\ 
$\tau_2$ & \texttt{b\_Tm\_E\_Tp} & $\beta_{-E}^{+}$ \\ 
$\tau_3$ & \texttt{b\_Tm\_F\_Tm} & $\beta_{-F}^{-}$ \\ 
$\tau_4$ & \texttt{b\_Tm\_F\_Tp} & $\beta_{-F}^{+}$ \\ 
$\tau_5$ & \texttt{b\_Tm\_H\_Tm} & $\beta_{-H}^{-}$ \\ 
$\tau_6$ & \texttt{b\_Tm\_H\_Tp} & $\beta_{-H}^{+}$ \\
$\tau_7$ & \texttt{b\_Tm\_Tp\_E} & $\alpha_{-+}^{E}$ \\ 
$\tau_8$ & \texttt{b\_Tm\_Tp\_F} & $\alpha_{-+}^{F}$ \\ 
$\tau_9$ & \texttt{b\_Tm\_Tp\_H} & $\alpha_{-+}^{H}$ \\ 
$\tau_{10}$ & \texttt{b\_Tp\_E\_Tm} & $\beta_{+E}^{-}$ \\ 
$\tau_{11}$ & \texttt{b\_Tp\_E\_Tp} & $\beta_{+E}^{+}$ \\ 
$\tau_{12}$ & \texttt{b\_Tp\_F\_Tm} & $\beta_{+F}^{-}$ \\ 
$\tau_{13}$ & \texttt{b\_Tp\_F\_Tp} & $\beta_{+F}^{+}$ \\ 
$\tau_{14}$ & \texttt{b\_Tp\_H\_Tm} & $\beta_{+H}^{-}$ \\ 
$\tau_{15}$ & \texttt{b\_Tp\_H\_Tp} & $\beta_{+H}^{+}$ \\ \hline
\end{tabular}
\caption{Correspondence between the polynomial indeterminates $\tau_i$, $i=1,\ldots,15$, code symbols, and free structure constants. Here, the code symbol refers to the notation employed in the code.}
\label{tau_correspondence}
\end{table}

By Section \ref{Grobner bases}, we know that each system of [nonlinear] polynomial equations determines the ideals
\begin{equation}
   I_{1} = \langle p_{1} ,\ldots, p_{203} \rangle \subset \mathbb{K}[u,v][\tau_{1},\ldots,\tau_{15}],
\end{equation}
\begin{equation}
    I_{2} = \langle q_{1} ,\ldots, q_{203} \rangle \subset \mathbb{K}[\gamma, \gamma^{-1}][\tau_{1},\ldots,\tau_{15}],
\end{equation}
\begin{equation}
    I_{3} = \langle r_{1}, \ldots, r_{203} \rangle \subset \mathbb{K}[\tau_{1},\ldots,\tau_{15}],
\end{equation}
\begin{equation}
    I_{4} = \langle s_{1}, \ldots, s_{203} \rangle \subset \mathbb{K}[\xi][\tau_{1},\ldots,\tau_{15}].
\end{equation}

We need to determine whether each system of equations has a solution in its corresponding polynomial ring to determine if the left-symmetric identity is satisfied. We proceed with two approaches for determining if each system has a solution of the form $(\tau_{1},\ldots,\tau_{15}) \in \mathbb{K}^{15}$. We compute the Gröbner basis for each polynomial system, and as a complementary check, we proceed with a 2-level polynomial factorization method. The code for Stage 2 can be found in the \verb|superization.py| file in the GitHub repository. A second code script was written using the SageMath Python library to verify the Gröbner basis computation. 

\subsubsection{Gröbner Basis Computation.} \label{grobner-basis-comp}

The first approach entails solving each system of equations over the field $\mathrm{GF}(3)$ by computing a Gröbner basis for each ideal $I_j$ for $j \in \{1,2,3,4\}$. We use the Gröbner basis function found in the Python SymPy library. Here, $\mathrm{GF}(3)$ corresponds to the Galois Field of order 3, or in other words, the finite field of 3 elements, equivalently expressed as $\mathbb{F}_{3} = \{0,1,2\}$. Moreover, we use Buchberger's algorithm established in \cite{buchberger-grobner} as the underlying method, and impose a lexicographical term order $\succ$ on the polynomials such that $\tau_{1} \succ \tau _{2} \succ \cdots \succ \tau_{14} \succ \tau_{15}$.  

Naturally, the choice of domain might seem questionable, particularly because the underlying vector space of $\mathfrak{osp}(1|2)$ lies over an arbitrary field $\mathbb{K}$ of characteristic 3. One might assume that our results in $\mathbb{F}_{3}$ do not hold in any field of characteristic 3. However, note that $\mathbb{F}_{3}$ is of characteristic 3 itself. Later in this section, we will justify how this computation generalizes results over all fields of characteristic 3.

In \cite{burde1}, the parameters $u,v \in \mathbb{K}$, and $\gamma \in \mathbb{K}^{\times}$ for structures $\mathcal{A}_{1,u,v}$ and $\mathcal{A}_{2,\gamma}$, respectively, are given arbitrarily. SymPy's Gröbner basis function does not allow us to specify arbitrary fields of characteristic $p \in \mathbb{Z}^{+}$. In order to account for this, we treat these parameters as additional polynomial unknowns alongside the rest of the $\tau_{i}$. Hence, the redefined ideals for the polynomial systems in Eqs. \eqref{sys-1} and \eqref{sys-2} are given by
\begin{equation}
    I_{1} \subset \mathbb{F}_{3}[\tau_{1},\ldots,\tau_{15}, u, v],
\end{equation}
\begin{equation}
    I_{2} = \langle q_{1},\ldots,q_{203}, \gamma \cdot \gamma^{-1} -1 \rangle \subset \mathbb{F}_{3}[\tau_{1},\ldots,\tau_{15}, \gamma, \gamma^{-1}].
\end{equation}

\noindent
Doing this avoids specializing these parameters to any value in $\mathbb{F}_{3}$. Additionally, we must point out that Gröbner basis algorithms require ordinary polynomial rings. This means that for $I_{2}$, these algorithms cannot handle $\gamma^{-1}$ directly, as the division algorithm relies on the term orders mentioned earlier in Section \ref{Grobner bases}, which require non-negative exponents. Thus, we introduce the indeterminate $\gamma^{-1}$ and enforce invertibility by adjoining the relation $\gamma \cdot \gamma^{-1} - 1$ as a generator of the ideal. This trivially enforces the condition $\gamma \neq 0$ and restricts the variety $V(I_{2})$ to the hypersurface $\gamma \cdot \gamma^{-1} =1$.

For structure $\mathcal{A}_{4}$, we employ a similar logic. The algorithm treats $\xi$ as a symbol instead of letting $\xi = i \in \mathbb{C}$, especially because $\xi \notin \mathbb{F}_{3}$. Thus, we directly introduce the relation $\xi^2 + 1 = 0$ and enforce it by adjoining it as a generator of the ideal. Therefore, the redefined ideals for the polynomial systems in Eqs. \eqref{sys-3} and \eqref{sys4} are defined by
\begin{equation}
    I_3 = \langle r_{1}, \ldots, r_{203} \rangle \subset \mathbb{F}_{3}[\tau_{1},\ldots,\tau_{15}], \text{ and}
\end{equation}
\begin{equation}
    I_{4} = \langle s_{1},\ldots,s_{203}, \xi^{2} + 1 \rangle \subset \mathbb{F}_{3}[\tau_{1},\ldots,\tau_{15}, \xi].
\end{equation}

Using the specified field, term order, and ideals mentioned earlier, for all extended structures, we obtain a Gröbner basis $G = \{1\}$. By Section \ref{Grobner bases}, $1 \in I_{j}$ if and only if $I_{j} = \langle 1 \rangle$ for all $j \in \{1,2,3,4\}$. Hence, all systems of equations are inconsistent in the field $\mathbb{F}_{3}$. We obtain the same results by computing the Gröbner basis for all four ideals using the field extension $\mathbb{F}_{9}$. Before proceeding to the generalization of these results for arbitrary field extensions, consider the following definitions.

\begin{definition}
\normalfont
Let $\mathbb{K}$ be a field. A subset $F$ that is itself a field under the operations of $\mathbb{K}$ is called a \textit{subfield} of $\mathbb{K}$. The field $\mathbb{K}$ is called an \textit{extension field} of $F$. If $F \neq \mathbb{K}$, $F$ is called a \textit{proper} subfield of $\mathbb{K}$. 
\end{definition}

Now, a field containing no proper subfields is called a \textit{prime} field. The intersection of all subfields of a field $\mathbb{K}$ is itself a field, called the \textit{prime subfield} of $\mathbb{K}$. Note that the prime subfield of $\mathbb{K}$ is a field, trivially. 

We introduce the following theorem relevant to our discussion on Gröbner bases. 

\begin{theorem}
    Let $\mathbb{K}$ be a field. The prime subfield of $\mathbb{K}$ is isomorphic to $\mathbb{Q}$ if $\mathbb{K}$ has characteristic $0$, and is isomorphic to $\mathbb{F}_{p}$ if $\mathbb{K}$ has characteristic $p$.
\end{theorem}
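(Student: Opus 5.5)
Let $P$ denote the prime subfield of $\mathbb{K}$, i.e.\ the intersection of all subfields of $\mathbb{K}$. The plan is to identify $P$ with the smallest subfield generated by $1$, and to build it explicitly from the unique ring homomorphism $\varphi \colon \mathbb{Z} \to \mathbb{K}$, $n \mapsto n\cdot 1$. Every subfield of $\mathbb{K}$ contains $0$ and $1$ and is closed under addition and additive inverses, hence contains $\varphi(\mathbb{Z})$. Being closed under inverses of nonzero elements, it then also contains every quotient $\varphi(m)\varphi(n)^{-1}$ with $\varphi(n) \neq 0$. So $P$ contains the subfield generated by $\varphi(\mathbb{Z})$. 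This set of quotients is itself a subfield, since closure under the field operations is a routine check. Therefore $P$ equals it.

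Next I split according to $\ker\varphi$. The kernel is an ideal $n\mathbb{Z}$ of $\mathbb{Z}$. Moreover, $\varphi(\mathbb{Z}) \cong \mathbb{Z}/\ker\varphi$ embeds in the field $\mathbb{K}$, so it is an integral domain, and hence $\ker\varphi$ is a prime ideal. Thus either $\ker\varphi = 0$ or $\ker\varphi = p\mathbb{Z}$ for a prime $p$. By the definition of characteristic, these two cases correspond exactly to characteristic $0$ and characteristic $p$.

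If $\mathrm{char}(\mathbb{K}) = p$, then $\varphi(\mathbb{Z}) \cong \mathbb{Z}/p\mathbb{Z} = \mathbb{F}_p$. This is already a field, so it is closed under taking quotients, and $P = \varphi(\mathbb{Z}) \cong \mathbb{F}_p$. If $\mathrm{char}(\mathbb{K}) = 0$, then $\varphi$ is injective, so $\varphi(n) \neq 0$ for every $n \neq 0$. By the universal property of the field of fractions, $\varphi$ extends to a ring homomorphism $\mathbb{Q} \to \mathbb{K}$, $m/n \mapsto \varphi(m)\varphi(n)^{-1}$. I will check that this map is well defined, which follows from $\varphi$ being a ring homomorphism. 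It is injective because any ring homomorphism out of a field with $1 \mapsto 1$ is injective. Its image is exactly the set of quotients described above, namely $P$, so $P \cong \mathbb{Q}$.

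The only step needing genuine care is showing that the characteristic is $0$ or a prime. That is the integral-domain argument: if $n = ab$ with $1 < a, b < n$, then $\varphi(a)\varphi(b) = 0$ forces $\varphi(a) = 0$ or $\varphi(b) = 0$, contradicting the minimality of $n$. Everything else is standard bookkeeping, and I would cite a standard algebra text such as \cite{judson-algebra} for the details.
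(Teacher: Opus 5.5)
Your proof is correct. It is the standard argument via $\varphi\colon\mathbb{Z}\to\mathbb{K}$, $n\mapsto n\cdot 1$: its kernel is a prime ideal ($0$ or $p\mathbb{Z}$), and the prime subfield is identified with the set of quotients $\varphi(m)\varphi(n)^{-1}$. Your final factorization argument only repeats the prime-ideal step, so either one suffices.

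The paper gives no proof of this theorem; it quotes it as a textbook fact. It only uses the characteristic-$3$ case, to obtain the inclusion $\iota\colon\mathbb{F}_3\hookrightarrow\mathbb{K}$ in \eqref{canonical-embedding} that underlies Lemma~\ref{scalar-extensions}. Your first paragraph together with your characteristic-$p$ case supplies that directly.
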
\label{prime-subfield-thm}

\noindent
By the previous theorem, we have that if $\mathbb{K}$ is an arbitrary field of characteristic 3, then it contains a prime subfield isomorphic to $\mathbb{F}_{3}$. Thus, $\mathbb{F}_{3} \subset \mathbb{K}$. Hence, there exists a canonical embedding given by the inclusion map

\begin{equation}
    \iota : \mathbb{F}_{3} \hookrightarrow \mathbb{K},
    \label{canonical-embedding} 
\end{equation}

\noindent
for every field $\mathbb{K}$ of characteristic 3.

Our Gröbner basis computation for all LSSAs on the base field $\mathbb{F}_{3}$ implies that there exists a Nullstellensatz certificate $\{h_{i}\}_{i=1}^{203}$ such that

\begin{equation} \sum\limits_{i=1}^{203}h_{i}f_{i} = 1,
    \label{certificate-nullstellen}
\end{equation}

\noindent
in the respective polynomial rings $\mathbb{F}_{3}[\tau_{1},\ldots,\tau_{15}, u, v]$,  $\mathbb{F}_{3}[\tau_{1},\ldots,\tau_{15}, \gamma, \gamma^{-1}]$, $\mathbb{F}_{3}[\tau_{1},\ldots,\tau_{15}]$, and $\mathbb{F}_{3}[\tau_{1},\ldots,\tau_{15}, \xi]$. Let $\mathbb{K}$ be any field of characteristic 3. Since we know that $\mathbb{F}_{3} \subset \mathbb{K}$, the embedding in \eqref{canonical-embedding} induces a ring homomorphism for each structure. For structure $\mathcal{A}_{1,u,v}$, this homomorphism is defined by 
\begin{equation}
    \tilde{\iota}_{1} : \mathbb{F}_{3}[\tau_{1},\ldots, \tau_{15}, u, v] \hookrightarrow \mathbb{K}[\tau_{1},\ldots, \tau_{15}, u, v], 
    \label{poly-ring-embed-1}
\end{equation}

\noindent
For structures $\mathcal{A}_{2,\gamma}, \mathcal{A}_{3}$, and $\mathcal{A}_{4}$, we define such homomorphisms similarly,
\begin{equation}
    \tilde{\iota}_{2} : \mathbb{F}_{3}[\tau_{1},\ldots, \tau_{15}, \gamma, \gamma^{-1}] \hookrightarrow \mathbb{K}[\tau_{1},\ldots, \tau_{15}, \gamma, \gamma^{-1}],
    \label{poly-ring-embed-2}
\end{equation}
\begin{equation}
    \tilde{\iota}_{3} : \mathbb{F}_{3}[\tau_{1},\ldots, \tau_{15}] \hookrightarrow \mathbb{K}[\tau_{1},\ldots, \tau_{15}],
    \label{poly-ring-embed-3}
\end{equation}
\begin{equation}
    \tilde{\iota}_{4} : \mathbb{F}_{3}[\tau_{1},\ldots, \tau_{15}, \xi] \hookrightarrow \mathbb{K}[\tau_{1},\ldots, \tau_{15}, \xi].
    \label{poly-ring-embed-4}
\end{equation}

\noindent
In all homomorphisms, the constant polynomial $1$ maps to itself, and nonzero polynomials are mapped to nonzero polynomials. Following Atiyah and Macdonald's Chapter 1.7 \textit{Extension and Contraction} in \cite{atiyah-macdonald}, given a ring homomorphism $\phi \colon A \to B$ and an ideal $J \subset A$, the extension $J^{B}$ of $J$ is defined as the ideal of $B$ generated by $\phi(J)$. Explicitly, the extension is defined as $J^{B} \coloneqq B \cdot \phi(J)$ and is the set of all sums given by
\[
    \sum y_{i}\phi(x_{i}),
\]
\noindent
such that $x_{i} \in J, \ y_{i} \in B$.
In our setting, the ring homomorphisms in Eqs. \eqref{poly-ring-embed-1}, \eqref{poly-ring-embed-2}, \eqref{poly-ring-embed-3}, and \eqref{poly-ring-embed-4} apply $\iota$ to each coefficient and fix each indeterminate. Hence, if $I_{1} \subset \mathbb{F}_{3}[\tau_{1},\ldots, \tau_{15}, u, v]$ is an ideal, then the extension $I_{1}^{\mathbb{K}}$ is the ideal of $\mathbb{K}[\tau_{1},\ldots, \tau_{15}, u, v]$ generated by $\tilde{\iota}_{1}(I_{1})$ such that
\begin{equation}
    I_{1}^{\mathbb{K}} \coloneqq \mathbb{K}[\tau_{1}, \ldots, \tau_{15}, u, v] \cdot \tilde{\iota}_{1}(I_{1}) = \langle \tilde{\iota}_{1}(p_{1}), \ldots, \tilde{\iota}_{1}(p_{203}) \rangle \subset \mathbb{K}[\tau_{1},\ldots, \tau_{15}, u, v].
\end{equation}

\noindent
Since $\iota$ is just the inclusion map, the generators of $I_{1}^{\mathbb{K}}$ are the same polynomials as those of $I_{1}$ viewed as elements of $\mathbb{K}[\tau_{1},\ldots, \tau_{15}, u, v]$. The same logic follows for ideals $I_{2}, I_3$ and $I_4$ and their extensions.

By our computation, if $1 \in I_{j}$ for all $j \in \{1,2,3,4\}$, then by the previous homomorphism, $1 \in I_{j}^{\mathbb{K}}$ for all $j \in \{1,2,3,4\}$ as well. As a result, the identity in \eqref{certificate-nullstellen} remains true in the field extension $\mathbb{K}$. This result is summarized in Lemma \ref{scalar-extensions}.

\begin{remark}
    We must highlight that, while we introduced Hilbert's Weak Nullstellensatz as a motivation for Section \ref{Grobner bases}, we do not require an algebraically closed field for our argument. Namely, we only justify the trivial direction $1 \in I \implies 1 \in I^{\mathbb{K}}$ such that $\mathbb{F}_{3} \subset \mathbb{K}$ and $\mathbb{K}$ is any field of characteristic 3.
\end{remark}

\begin{lemma}
\label{scalar-extensions}    
Let $F$ be a field, $I = \langle f_{1}, \ldots, f_{m} \rangle \subset F[x_{1}, \ldots, x_{n}]$ an ideal, and let $\mathbb{K}$ be a field extension such that $F \subset \mathbb{K}$. Define the extended ideal $I^{\mathbb{K}} \coloneqq I \cdot \mathbb{K}[x_1, \ldots, x_n] = \langle \tilde{\iota}(f_{1}), \ldots, \tilde{\iota}(f_{m}) \rangle \subset \mathbb{K}[x_1, \ldots, x_n].$ If $1 \in I$, then $1 \in I^{\mathbb{K}}$. In particular, $V_{\mathbb{K}}(I^{\mathbb{K}}) = \varnothing$. 
\end{lemma}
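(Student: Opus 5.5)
The plan is to transport a certificate of $1\in I$ along the coefficient-wise inclusion $\tilde{\iota}\colon F[x_1,\ldots,x_n]\hookrightarrow \mathbb{K}[x_1,\ldots,x_n]$. Then we evaluate that certificate at a hypothetical common zero to get a contradiction. No Nullstellensatz or algebraic closedness is needed, in line with the preceding remark.

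\emph{First step.} Since $1\in I=\langle f_1,\ldots,f_m\rangle$, there are polynomials $h_1,\ldots,h_m\in F[x_1,\ldots,x_n]$ with $\sum_{i=1}^m h_i f_i=1$, exactly as in Eq.~\eqref{certificate-nullstellen}. Next I check that $\tilde{\iota}$, which applies the inclusion $F\subset\mathbb{K}$ to every coefficient and fixes each indeterminate, is a unital ring homomorphism. This is because addition and multiplication of polynomials are computed from sums and products of coefficients, and the inclusion respects these. Applying $\tilde{\iota}$ to the certificate gives
\[
\sum_{i=1}^m \tilde{\iota}(h_i)\,\tilde{\iota}(f_i)=\tilde{\iota}(1)=1 .
\]
The left-hand side is a $\mathbb{K}[x_1,\ldots,x_n]$-linear combination of the generators $\tilde{\iota}(f_i)$ of $I^{\mathbb{K}}$, so $1\in I^{\mathbb{K}}$. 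This is the statement $J^B\ni\phi(1)$ for the extension of ideals in the sense of Atiyah--Macdonald, as recalled above.

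\emph{Second step.} Suppose $\mathbf{a}=(a_1,\ldots,a_n)\in\mathbb{K}^n$ lies in $V_{\mathbb{K}}(I^{\mathbb{K}})$, so that $\tilde{\iota}(f_i)(\mathbf{a})=0$ for all $i$. Evaluation at $\mathbf{a}$ is a ring homomorphism $\mathbb{K}[x_1,\ldots,x_n]\to\mathbb{K}$. Applying it to the displayed identity gives $0=\sum_i \tilde{\iota}(h_i)(\mathbf{a})\cdot 0=1$ in $\mathbb{K}$. This is impossible because $1\neq 0$ in a field. Hence $V_{\mathbb{K}}(I^{\mathbb{K}})=\varnothing$.

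I do not expect a real obstacle; the argument is essentially formal. The only point deserving care is the bookkeeping in the applications, namely adjoining $\gamma\gamma^{-1}-1$ and $\xi^2+1$ as generators, and treating $u,v,\gamma,\gamma^{-1},\xi$ as extra indeterminates. These are handled simply by taking $n$ to include those variables, so the lemma applies verbatim. The empty variety then rules out every parameter value as well.
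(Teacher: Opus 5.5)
Your proposal is correct and follows the paper's proof essentially step for step. You push the certificate $\sum_{i} h_i f_i = 1$ through the coefficient-wise inclusion $\tilde{\iota}$ to get $1 \in I^{\mathbb{K}}$, then evaluate at a putative common zero in $\mathbb{K}^n$ to obtain $0=1$; your treatment of $u, v, \gamma, \gamma^{-1}, \xi$ as extra indeterminates (with $\gamma\gamma^{-1}-1$ and $\xi^2+1$ adjoined as generators) matches how the paper applies the lemma.
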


\begin{proof}
    The inclusion $\iota : F \hookrightarrow \mathbb{K}$ induces the injective ring homomorphism
    \[
        \tilde{\iota}: F[x_{1}, \ldots, x_{n}] \hookrightarrow \mathbb{K}[x_{1}, \ldots, x_{n}]
    \] 

    \noindent
    that fixes each indeterminate and extends the coefficients. Since $1 \in I$, there exist polynomials $h_1, \ldots, h_m \in F[x_{1}, \ldots, x_{n}]$ such that 
    \[
        \sum\limits_{i=1}^{m}h_{i}f_{i} = 1.
    \]

    \noindent
    Applying $\tilde{\iota}$ to the previous identity yields 
    \[ 
       \sum\limits_{i=1}^{m} \tilde{\iota}(h_{i}) \tilde{\iota}(f_{i}) = 1 \in \mathbb{K}[x_{1}, \ldots, x_{n}].
    \]

    \noindent
    Since each $\tilde{\iota}(f_{i}) = f_{i}$, $i=1, \ldots,m$, viewed in $\mathbb{K}[x_{1}, \ldots, x_{n}]$, we have $1 \in I^{\mathbb{K}}$.

    Now, suppose that ${\bf{x}} = (x_{1},\ldots,x_{n}) \in \mathbb{K}^{n}$ is a common zero of $I^{\mathbb{K}}$. Then, evaluating at $\bf{x}$, we have  
    \[
      \sum\limits_{i=1}^{m} \tilde{\iota}(h_{i}({\bf{x}})) f_{i}({\bf{x}}) = 1  \implies 0=1,
    \]

    \noindent
    which is impossible. Therefore, no solution exists in $\mathbb{K}^{n}$ and thus $V_{\mathbb{K}}(I^{\mathbb{K}}) = \varnothing$.
\end{proof}

By Lemma \eqref{scalar-extensions}, the systems of polynomial equations in \eqref{sys-1}, \eqref{sys-2}, \eqref{sys-3}, and \eqref{sys4} are inconsistent over any field $\mathbb{K}$ of characteristic 3. Therefore, Burde's structures cannot be extended to $\mathfrak{osp}(1|2)$, and thus this Lie superalgebra does not admit left-symmetric superalgebra structures. We can summarize this result in the following theorem.

\begin{theorem}[Left-Symmetric Structures on $\mathfrak{osp}(1|2)$]
    Let $\mathbb{K}$ be a field of characteristic different from $2$. Then, the Lie superalgebra $\mathfrak{osp}(1|2)$ does not admit a compatible left-symmetric superalgebra structure. 
\end{theorem}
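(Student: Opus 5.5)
The proof splits on the characteristic of $\mathbb{K}$. In every case it uses the remark that if $\mathcal{A}$ is an LSSA on $\mathfrak{osp}(1|2)$, then $\mathcal{A}_{\bar0}$ is an LSA on $\mathfrak{osp}(1|2)_{\bar0}\cong\mathfrak{sl}_2(\mathbb{K})$. We may also pass to the algebraic closure $\overline{\mathbb{K}}$ throughout. Indeed, an LSSA structure over $\mathbb{K}$ extends by scalars to one over $\overline{\mathbb{K}}$, because the defining identities are multilinear. So non-existence over $\overline{\mathbb{K}}$ implies non-existence over $\mathbb{K}$.

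\textbf{Characteristic $0$ and $p>3$.} Here the even part already gives the obstruction. In characteristic $0$, $[\mathfrak{sl}_2,\mathfrak{sl}_2]=\mathfrak{sl}_2$, so Helmstetter's result excludes any LSA on $\mathfrak{sl}_2(\mathbb{K})$, and hence on the even part. For $p>3$, Theorem \ref{burde-theorem-specialized} says $\mathfrak{sl}_2(\overline{\mathbb{K}})$ admits no left-symmetric structure. In both cases the restriction $\mathcal{A}_{\bar0}$ cannot exist, which is a contradiction.

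\textbf{Characteristic $3$.} Over $\overline{\mathbb{K}}$, $\mathcal{A}_{\bar0}$ is isomorphic to one of $\mathcal{A}_{1,u,v}$, $\mathcal{A}_{2,\gamma}$, $\mathcal{A}_3$, $\mathcal{A}_4$. We first need to transport the whole superalgebra so that its even part is literally one of Burde's tables in the basis $\langle E,F,H\rangle$. Burde's isomorphism is an automorphism $\phi$ of $\mathfrak{sl}_2$, and this is the step I expect to be the main obstacle. To handle it, I would lift $\phi$ to an even automorphism $\hat\phi$ of $\mathfrak{osp}(1|2)$. Every automorphism of $\mathfrak{sl}_2(\overline{\mathbb{K}})$ is inner, i.e.\ conjugation by some $g\in SL_2$, and $SL_2=Sp_2$ acts on $\mathfrak{osp}(1|2)$ by conjugation through the block $\mathrm{diag}(1,g)$, preserving $\mathscr{B}$. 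Transporting $\mathcal{A}$ by $\hat\phi$ gives an LSSA whose even product is exactly one of Burde's structures, still compatible with the brackets \eqref{sl2-relations}, \eqref{odd_rela}, \eqref{even_odd_rela}. If the lift to $\mathrm{diag}(1,g)$ were problematic in characteristic $3$, I would fall back on the weaker fact that any automorphism of the even part preserving the bracket extends, because $\mathfrak{osp}(1|2)_{\bar1}$ is the unique $2$-dimensional irreducible module.

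Once the even part is normalized, the odd-involving structure constants ($\alpha$, $\beta$, $\delta$) satisfy the linear Stage 1 system. Its general solution is Table \ref{dependent_params_table}, which has $15$ free parameters $\tau_i$. The graded left-symmetric identity \eqref{left_sym_id} then imposes the polynomial systems \eqref{sys-1}--\eqref{sys4}, with the Burde parameters $u,v$, $\gamma^{\pm1}$ (together with $\gamma\gamma^{-1}-1$) and $\xi$ (together with $\xi^2+1$) adjoined as variables. These systems are defined over $\mathbb{F}_3$, and the Gröbner computations give $G=\{1\}$, so $1\in I_j$ for $j=1,\dots,4$. By Lemma \ref{scalar-extensions} applied with $F=\mathbb{F}_3\subset\overline{\mathbb{K}}$, each $V_{\overline{\mathbb{K}}}(I_j^{\overline{\mathbb{K}}})$ is empty. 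This holds in particular for every specialization of $u,v,\gamma,\xi$, since a point with specific parameter values would be a common zero. Hence no choice of odd constants satisfies the identity, which contradicts the existence of $\mathcal{A}$ and completes the case $p=3$.
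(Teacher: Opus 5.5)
Your proposal is correct and follows the same route as the paper. Restricting to the even part $\mathfrak{sl}_2(\mathbb{K})$ handles characteristic $0$ (Helmstetter) and $p>3$ (Burde), and in characteristic $3$ you reduce to Burde's four families and invoke the Gröbner computation $G=\{1\}$ over $\mathbb{F}_3$ plus the scalar-extension lemma, with $u,v,\gamma^{\pm1},\xi$ treated as variables so every specialization is covered. Your one addition is worthwhile: you lift Burde's isomorphism $\phi\in\mathrm{Aut}(\mathfrak{sl}_2)$ to an even automorphism of $\mathfrak{osp}(1|2)$ via $\mathrm{diag}(1,g)$ with $g\in SL_2=Sp_2$, which justifies a normalization the paper uses only tacitly, namely that the even part is literally one of the four tables in the basis $\langle E,F,H\rangle$ rather than merely isomorphic to one.
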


\begin{proof}
    Suppose the Lie superalgebra $\mathfrak{osp}(1|2)$ admits a compatible left-symmetric superalgebra structure over $\mathbb{K}$. Then, its restriction to the even part $\mathfrak{osp}(1|2)_{\bar{0}}$ defines a compatible left-symmetric algebra structure on $\mathfrak{osp}(1|2)_{\bar{0}} \cong \mathfrak{sl}_{2}(\mathbb{K})$.

    In characteristic $0$, such a structure on $\mathfrak{sl}_{2}(\mathbb{K})$ is ruled out by Helmstetter's standard nonexistence result for perfect Lie algebras \cite{helmstetter}.

    In characteristic $p >3$, Burde's result (established in Theorem \ref{burde-theorem-specialized}) rules it out because $\mathrm{dim}(\mathfrak{sl}_{2}(\mathbb{K})) = 3$ and $p$ does not divide $3$.

    It remains to consider the case of characteristic $3$. By changing the base field, the assumed left-symmetric superalgebra over $\mathbb{K}$ extends to a left-symmetric structure over the algebraic closure $\overline{\mathbb{K}}$, with the even part remaining a left-symmetric algebra on $\mathfrak{sl}_{2}(\mathbb{K})$. Burde classifies all compatible left-symmetric algebra structures on $\mathfrak{sl}_{2}(\mathbb{K})$ over algebraically closed fields into the four families $\mathcal{A}_{1,u,v}$, $\mathcal{A}_{2, \gamma}$, $\mathcal{A}_{3}$, and $\mathcal{A}_{4}$. The Gröbner basis computations in Section \ref{grobner-basis-comp} show that, for each of the four structures, the resulting polynomial system is inconsistent over $\mathbb{F}_3$. Using Lemma \ref{scalar-extensions}, this inconsistency extends to any field of characteristic $3$. This includes the algebraically closed field $\overline{\mathbb{K}}$. Hence, none of the four even-part structures extends to a compatible left-symmetric superalgebra structure on $\mathfrak{osp}(1|2)$, thus contradicting the assumption we made earlier. Therefore, no compatible left-symmetric superalgebra structure exists on $\mathfrak{osp}(1|2)$ for $\mathrm{char}(\mathbb{K})$ different from $2$.
\end{proof}

During the computation process, we considered using SymPy's expression domain $\mathrm{EX}$ as an alternative field for the Gröbner basis algorithms. This domain is used for algebraic expressions that cannot be represented with traditional fields. In our case, the first two systems of equations have coefficients that depend on the arbitrary parameters $u, v \in \mathbb{K}$, and $\gamma \in \mathbb{K}^{\times}$ defined by Burde. Computing Gröbner bases over the domain $\mathrm{EX}$ would allow us to treat such coefficients as formal expressions rather than specializing the parameters to specific numerical elements. However, $\mathrm{EX}$ is a symbolic expression domain of characteristic $0$ like $\mathbb{Q}$ or $\mathbb{R}$, and is thus irrelevant to our discussion.

\subsubsection{Polynomial Factorization.} 

As a complementary approach to the Gröbner basis computation, we analyze the polynomial systems by factoring their defining equations. This method decomposes the algebraic variety of each system of equations into unions of subvarieties corresponding to vanishing factors. Consider the ideal $I_{1}$ for the first system, for example. We can express its algebraic variety as
    \begin{equation}
        V(I_{1}) = V(p_{1}) \cap \cdots \cap V(p_{203}). \label{variety-ideal1}
    \end{equation}
Say a polynomial $p_{k}$ factors as $p_{k} = h \cdot g$, where $h,g \in \mathbb{K}[u,v][\tau_{1},\ldots,\tau_{15}]$. Then, we can write $V(p_{k}) = V(h) \cup V(g)$. Substituting this into Eq. \eqref{variety-ideal1}, we may rewrite it as 
    \begingroup
    \small
    \begin{equation}
        V(I_{1}) = \underbrace{(V(p_{1}) \cap \cdots  \cap V(h) \cap \cdots \cap V(p_{203}))}_{\text{subvariety $1$}} \cup \underbrace{(V(p_{1}) \cap \cdots \cap V(g) \cap \cdots \cap V(p_{203}))}_{\text{subvariety $2$}}. \label{decomp_variety}
    \end{equation}
    \endgroup
    
\noindent
If a polynomial equation admits a nontrivial factorization, then any solution must lie in one of the algebraic subvarieties defined by the vanishing of one of its factors. The branch analysis consists of two sequential levels of inconsistency detection. The first level entails nonzero constant checks, where after substituting a linear factor into all 203 polynomials, the code searches for any remaining nonzero constant expressions. That is, the code looks for remaining contradictory expressions of the form $c = 0$ where $c \in \mathbb{K} \backslash \{0\}$. 

The second level deals with any polynomials that contain at least one unknown, even after the substitutions in level 1. This means that, even after level 1 factorization, we do not reach an obvious constant contradiction yet. However, the system may still be inconsistent due to interactions between the polynomials that survived. Thus, this code block takes these remaining polynomials, reduces them modulo $3$, and computes their Gröbner basis. If the resulting Gröbner basis is $G=\{1\}$, then by Section \ref{Grobner bases}, 1 belongs to the corresponding branch ideal, and thus the branch system of equations is inconsistent. The detailed approach explaining Eq. \eqref{decomp_variety} is described below. Consider the first system of equations in Eq. \eqref{sys-1} such that 
\begin{equation*}
    p_{1} = \cdots = p_{203} = 0.
\end{equation*}

\noindent
Here, any possible solution for a specific polynomial must satisfy every equation simultaneously. Now, suppose that a polynomial $p_{k}$, for some $k=1,\ldots,203$, factors nontrivially as a product of the following form
\begin{equation*}
    p_{k} = h \cdot g, 
\end{equation*}

\noindent
where $h,g \in \mathbb{K}[u,v][\tau_{1}, \ldots, \tau_{15}]$. Since $\mathbb{K}[u,v][\tau_{1}, \ldots, \tau_{15}]$ is an integral domain, the equation $h \cdot g = 0$ holds if and only if $h=0$ or $g=0$. Thus, any solution to the full system of polynomial equations must lie in one of two branches: 

\begin{enumerate}
    \setlength{\itemsep}{0pt}   % Space between items
    \setlength{\parskip}{0pt}   % Space between paragraphs in items
    \setlength{\topsep}{0pt}
    \item [(i)] $h = 0$ and $p_{1} = \cdots = p_{k-1} = p_{k+1} = \cdots = p_{203} = 0$,
    \item [(ii)] $g = 0$ and $p_{1} = \cdots = p_{k-1} = p_{k+1} = \cdots = p_{203} = 0$. 
\end{enumerate}

\noindent
If both branches are independently inconsistent, the entire system has no solution. If, however, we do not find inconsistencies in either of the branches, then we proceed to level 2 and compute the Gröbner basis for the corresponding branch ideals. The same logic follows for all other structures and their respective systems of equations. 

The factorization is performed over $\mathbb{Q}$ using SymPy's \verb|sp.factor()| function. This is a computational limitation, as the SymPy library does not support factorization of multivariate polynomials over finite fields, in this case $\mathbb{F}_{3}$. As a consequence, the irreducibility classification is a statement of irreducibility over $\mathbb{Q}$, not over $\mathbb{F}_{3}$. Factorization depends on the ground field; an irreducible polynomial over $\mathbb{Q}$ may further factor over $\mathbb{F}_{3}$, or conversely. Evidently, the factorization of these polynomials over $\mathbb{Q}$ does not determine their factorization over arbitrary fields of characteristic $3$. Nevertheless, the branch analysis conclusions below are still valuable for this paper for the following reasons. First, the zero-product property holds over any field. Second, the linear factors found for both structures are expressions with integer coefficients. Third, the contradictions produced by the branch analysis, that is, nonzero constants remaining after substitution, are verified to be nonzero modulo $3$. This means that they remain contradictions even in characteristic 3. Hence, although factorization may not capture all possible decompositions over $\mathbb{F}_{3}$, every branch identified is shown to be inconsistent modulo 3.

For structure $\mathcal{A}_{1,u,v}$, out of the 203 polynomials, 200 are irreducible, while the remaining 3 polynomials are scalar multiples of the following product
\[
    \alpha_{-+}^{F}(\beta_{+E}^{-}) = 0. 
\]
\noindent
These precisely correspond to polynomials $p_{16}, p_{104}$ and $p_{134}$. For each factored polynomial, this yields linear factors, which means $\alpha_{-+}^{F} = 0$ or $\beta_{+E}^{-} = 0$. Therefore, any solution must lie in one of the following branches:

\begin{enumerate}
    \setlength{\itemsep}{0pt}   % Space between items
    \setlength{\parskip}{0pt}   % Space between paragraphs in items
    \setlength{\topsep}{0pt}
    \item [(i)] $\alpha_{-+}^{F} = 0$ or,
    \item [(ii)] $\beta_{+E}^{-} = 0$. 
\end{enumerate}

\noindent
For each branch, the constraint is substituted into all 203 polynomial equations. In the first branch, letting $\alpha_{-+}^{F} = 0$ generates inconsistencies. Namely, assuming this equality forces polynomial $p_{80} = 1$, but this is an inconsistency because it would imply $1=0$. Hence, the system of polynomial equations corresponding to structure $\mathcal{A}_{1,u,v}$ has no solution. 

A similar result occurs with the system of equations for structure $\mathcal{A}_{2,\gamma}$. Of the $203$ polynomials, $153$ are irreducible. The remaining $50$ equations can be factored nontrivially, with $100$ polynomials with linear factors. From the $50$ factored polynomials, $20$ unique linear factors were found, which can be classified into: 

\begin{enumerate}
    \setlength{\itemsep}{0pt}   % Space between items
    \setlength{\parskip}{0pt}   % Space between paragraphs in items
    \setlength{\topsep}{0pt}
    \item [(i)] Single-variable factors: $\beta_{+E}^{-}=0, \ \alpha_{-+}^{F}=0, \ \beta_{-E}^{-}=0, \ \beta_{-E}^{+}=0, \ \alpha_{-+}^{E}=0, \ \beta_{+F}^{-} = 0, \ \beta_{-F}^{+} = 0, \ \beta_{+F}^{+} = 0, \ \alpha_{-+}^{H} = 0, \ \beta_{+H}^{-} =0, \ \beta_{-H}^{+} = 0$. These force the structure coefficients to be equal to zero;
    \item [(ii)] Sum or difference of relations: $\beta_{-E}^{-} + \beta_{+E}^{+} = 0, \ \beta_{-F}^{-} + \beta_{+F}^{+} = 0, \ \beta_{-H}^{-}\gamma^{-1} + \beta_{+H}^{+}\gamma^{-1} -2\gamma^{-1} -1 = 0, \ \beta_{-H}^{-}\gamma^{-1} + \beta_{+H}^{+}\gamma^{-1} +2\gamma^{-1} -1 = 0$. These force linear dependencies between structure constants;
    \item [(iii)] Fixed-value constants: $\alpha_{-+}^{H} - 1=0, \ \beta_{-H}^{-}\gamma^{-1} - \gamma^{-1} - 1 = 0, \ \beta_{-H}^{-}\gamma^{-1} - 1 = 0, \ \beta_{+H}^{+}\gamma^{-1} + \gamma^{-1} - 1 = 0, \ \beta_{+H}^{+}\gamma^{-1} -1 =0$. 
\end{enumerate}

\noindent
Some examples of these factored polynomials are $q_{10}, \ q_{16}$, and $q_{78}$, each representing one of the previous categories. These polynomials factor as follows
\begin{align*}
    q_{10} &= -\beta_{+E}^{-}(\beta_{-E}^{-} + \beta_{+E}^{+}), \\
    q_{16} &= -\alpha_{-+}^{F}(\beta_{+E}^{-}),\\
    q_{78} &= -\beta_{+H}^{-}(\beta_{-H}^{-} + \beta_{+H}^{+} - 2 -\gamma^{-1}).
\end{align*}
Using polynomial $q_{10}$ as an example for factors with a sum or difference relation, if we let $\beta_{+E}^{-} = 0$, no immediate contradiction appears. Nonetheless, we obtain a Gröbner basis of $G = \{1\}$ from the corresponding branch ideal. We obtain the same result letting $\beta_{-E}^{-} + \beta_{+E}^{+} = 0$. Using polynomial $q_{16}$ as an example for the single-variable factors, letting $\alpha_{-+}^{F} = 0$ or $\beta_{+E}^{-} = 0$, we obtain the same result as with polynomial $q_{10}$. Lastly, for polynomial $q_{78}$ as an example for fixed-value constants, setting $\beta_{+H}^{-} = 0$ or $\beta_{-H}^{-} + \beta_{+H}^{+} - 2 -\gamma^{-1} = 0$ also yields the same result. All $20$ linear factor branches are inconsistent, thus confirming that the system of equations corresponding to structure $\mathcal{A}_{2,\gamma}$ also has no solution. 

For the third system of equations, of the $203$ polynomials, $161$ are irreducible equations, while the remaining $42$ equations can be factored nontrivially. These can also be classified into the previous categories. Some examples of these factored polynomials are $r_{16}, \ r_{89}$, and $r_{93}$. These factor as follows
\begin{align*}
    r_{16} &= -\beta_{-E}^{-}(\beta_{+E}^{-}), \\
    r_{89} &= -\beta_{-H}^{+}(\beta_{-H}^{-} + \beta_{+H}^{+}), \\
    r_{93} &= \alpha_{-+}^{H}(\beta_{+H}^{+} + 1).
\end{align*}

\noindent
For all $42$ factored polynomials, we find no solutions as well. For polynomial $r_{16}$, we find no initial contradiction; nevertheless, we obtain a Gröbner basis $G = \{1\}$. For polynomial $r_{89}$, we find an immediate contradiction upon substituting either $\beta_{-H}^{+} = 0$ or $\beta_{-H}^{-} + \beta_{+H}^{+} = 0$ into the system of equations $\{r_i = 0\}_{i=1}^{203}$. Specifically, for polynomial $r_{94} = \beta_{-H}^{+}(\alpha_{-+}^{E}) + 1$, substituting $\beta_{-H}^{+} = 0$ forces the contradiction $1=0$. For polynomial $r_{93}$ we obtain a similar result: Substituting $\alpha_{-+}^{H} = 0$ into polynomial $r_{32} = \beta_{-E}^{+}(\alpha_{-+}^{H}) -1$ also yields the same inconsistency.

Lastly, the fourth system of equations yields similar results. Of the $203$ equations, $160$ are irreducible while the remaining $43$ can be factored nontrivially. In total, $18$ unique linear factors were found, of which the majority resulted in inconsistent branches with Gröbner bases $G = \{1\}$. We take polynomials $s_{78}, \ s_{84}$ and $s_{185}$ as examples, which factor as follows
\begin{align*}
    s_{78} &= \beta_{+H}^{-}(\xi - \beta_{-H}^{-} - \beta_{+H}^{+} + 2), \\
    s_{84} &= (\xi - \beta_{-H}^{-})(\alpha_{-+}^{H} -1), \\
    s_{185} &= \beta_{-+}^{E}(\xi - \beta_{-H}^{-} - \beta_{+H}^{+} -2). 
\end{align*}

\noindent
Factorization and substitution of such factors into the system of equations, or subsequent Gröbner basis computation, are also unsuccessful in this case. Thus, this system of equations is inconsistent. 

In practice, we found that the factorization yields no nontrivial decomposition leading to a consistent solution of each system. Specifically, either the polynomials remain irreducible over the chosen coefficient field, or their factors impose constraints that contradict other equations in the system. Specifically, no branch of the factorized system admits a simultaneous solution of all graded left-symmetric identities.

This factor-based analysis is consistent with the Gröbner basis computation done earlier. Although the Gröbner basis approach identifies global inconsistency, the factorization method analyzes the system locally by decomposing it into algebraic subcases. The fact that no factor branch yields a solution provides independent confirmation that the polynomial ideal generated by the graded left-symmetric identities is the entire polynomial ring. In other words, $I_1 = \mathbb{K}[u,v][\tau_1, \ldots, \tau_{15}]$, $I_2 = \mathbb{K}[\gamma, \gamma^{-1}][\tau_1, \ldots, \tau_{15}]$, $I_3 = \mathbb{K}[\tau_1, \ldots, \tau_{15}]$, and $I_4 = \mathbb{K}[\xi][\tau_1, \ldots, \tau_{15}]$.

Altogether, the Gröbner basis computation and the factorization-based approaches demonstrate that the left-symmetric identity admits no solution extending Burde's left-symmetric algebra structures from $\mathfrak{sl}_2(\mathbb{K})$ to $\mathfrak{osp}(1|2)$. In Stage $1$ of the superization, we found that the left product is compatible with the supercommutator of $\mathfrak{osp}(1|2)$. However, in Stage 2, we fail to reconcile the left product with the graded left-symmetric identity. The $203$ nontrivial polynomial equations from each algebra arise solely from associators containing $T_{\pm}$. Thus, the obstruction is entirely attributed to the interaction between $\mathfrak{sl}_{2}(\mathbb{K})$ and $\mathfrak{osp}(1|2)_{\bar{1}}$. This indicates that the obstruction is a result of structural incompatibility between the odd and even parts of $\mathfrak{osp}(1|2)$. Given that the obstruction already emerges at the level of the polynomial identities, this impossibility holds uniformly over all admissible values of the parameters defining the even left-symmetric structures. 

Further research could explore this obstruction through a cohomological lens analogous to Burde's use of $H^{1}(\mathfrak{g}, M_{\lambda})$ for $\mathfrak{sl}_{2}(\mathbb{K})$. Perhaps this obstruction could be explained through a superized cohomological framework. Moreover, Helmstetter's theorem \cite{helmstetter} could be further investigated, namely by exploring whether a similar theorem exists for the super case. That is, if $\mathfrak{g}$ is a simple Lie superalgebra over a field of characteristic $p \in \mathbb{Z}^{+}$ with $[\mathfrak{g}, \mathfrak{g}] = \mathfrak{g}$, under what circumstances does it fail to admit LSSAs? 

\newpage
\section{Appendix}
\subsection{Verification of Burde's Structures: The Left-Symmetric Identity.}\label{Appendix 1: Tables}

Below are the tables with all of the computations for the associators and their differences for the structures proposed by Burde in \cite{burde1}, as mentioned in Section \ref{Verif Burde LSAs}. Note that for structure $\mathcal{A}_{1,u,v}$, we included only the difference of associators as some of the associator expressions were very long and thus would overflow the document margins. The computations for all structures were performed using Python scripts \verb|lss_1_op.py|, \verb|lss_2_op.py|, \verb|lss_3_op.py|, and \verb|lss_4_op.py| found in the GitHub repository \href{https://github.com/jl13583/capstone_josemaria}{capstone\textunderscore josemaria}.

% =====================================================================
% Table for Structure 1 (LSS 1) — rows 30–56 of spreadsheet
% =====================================================================

\begin{table}[H]
\centering
\small
\setlength{\tabcolsep}{4pt}
\renewcommand{\arraystretch}{1.3}
\begin{tabular}{ccc}
\hline
$(X,Y,Z)$ & $(Y,X,Z)$ & $\Delta(X,Y,Z)$ \\ \hline
$(E,F,H)$ & $(F,E,H)$ & $-3vF = 0$ \\ 
$(H,F,E)$ & $(F,H,E)$ & $-3uE + (3u - 3v)F + 6u(-u + v)H = 0$ \\ 
$(H,E,F)$ & $(E,H,F)$ & $-3uE + 3uF + (-6u^2 + 6uv - 3)H = 0$ \\ 
$(F,H,E)$ & $(H,F,E)$ & $3uE + (-3u + 3v)F + 6u(u - v)H = 0$ \\ 
$(E,E,E)$ & $(E,E,E)$ & $0$ \\ 
$(E,H,E)$ & $(H,E,E)$ & $3H = 0$ \\ 
$(E,F,E)$ & $(F,E,E)$ & $-3vH = 0$ \\ 
$(H,H,E)$ & $(H,H,E)$ & $0$ \\ 
$(F,F,E)$ & $(F,F,E)$ & $0$ \\ 
$(E,E,H)$ & $(E,E,H)$ & $0$ \\ 
$(E,H,H)$ & $(H,E,H)$ & $3uH = 0$ \\ 
$(H,H,H)$ & $(H,H,H)$ & $0$ \\ 
$(H,F,H)$ & $(F,H,H)$ & $-3v^2E + (3u + 3v)H = 0$ \\ 
$(F,F,H)$ & $(F,F,H)$ & $0$ \\ 
$(E,E,F)$ & $(E,E,F)$ & $0$ \\ 
$(E,F,F)$ & $(F,E,F)$ & $3v(-u + v)E + (-3u^2v + 6uv^2 - 3u - 3v^3 - 3v)H = 0$ \\ 
$(H,H,F)$ & $(H,H,F)$ & $0$ \\
$(H,F,F)$ & $(F,H,F)$ & $3u(uv - v^2 - 1)E - 3vF + 6v^2H = 0$ \\
$(F,F,F)$ & $(F,F,F)$ & $0$ \\
$(E,H,F)$ & $(H,E,F)$ & $3uE - 3uF + (6u^2 - 6uv + 3)H = 0$ \\
$(H,E,E)$ & $(E,H,E)$ & $-3H = 0$ \\
$(H,E,H)$ & $(E,H,H)$ & $-3uH = 0$ \\
$(F,E,E)$ & $(E,F,E)$ & $3vH = 0$ \\
$(F,E,H)$ & $(E,F,H)$ & $3vF = 0$ \\
$(F,E,F)$ & $(E,F,F)$ & $3v(u - v)E + (3u^2v - 6uv^2 + 3u + 3v^3 + 3v)H = 0$ \\
$(F,H,H)$ & $(H,F,H)$ & $3v^2E + (-3u - 3v)H = 0$ \\ 
$(F,H,F)$ & $(H,F,F)$ & $3u(-uv + v^2 + 1)E + 3vF + (-6v^2)H = 0$ \\ \hline
\end{tabular}
\caption{Associator verification for Burde's left-symmetric structure $\mathcal{A}_{1, u,v}$ on $\mathfrak{sl}_2(\mathbb{K})$. Note that the column $\Delta(X,Y,Z) = (X,Y,Z) - (Y,X,Z)$ confirms the left-symmetric identity, as all entries equal zero in characteristic $3$.}
\label{assoc_table_lss1}
\end{table}

% =====================================================================
% Table for Structure 2 (LSS 2) — rows 2–28 of spreadsheet
% =====================================================================

\begin{table}[H]
\centering
\small
\setlength{\tabcolsep}{4pt}
\renewcommand{\arraystretch}{1.3}
\begin{tabular}{ccccc}
\hline
$(X,Y,Z)$ & $(Y,X,Z)$ & $\Delta(X,Y,Z)$ & $(X,Y,Z)$ & $(Y,X,Z)$ \\ \hline
$(E,F,H)$ & $(F,E,H)$ & $0$ & $(-1 - \gamma^{-1})\gamma H$ & $(-1 - \gamma^{-1})\gamma H$ \\ 
$(H,F,E)$ & $(F,H,E)$ & $0$ & $H - \gamma^{-1}H$ & $H - \gamma^{-1}H$ \\
$(H,E,F)$ & $(E,H,F)$ & $0$ & $H + \gamma^{-1}H$ & $H + \gamma^{-1}H$ \\ 
$(F,H,E)$ & $(H,F,E)$ & $0$ & $H - \gamma^{-1}H$ & $H - \gamma^{-1}H$ \\ 
$(E,E,E)$ & $(E,E,E)$ & $0$ & $0$ & $0$ \\ 
$(E,H,E)$ & $(H,E,E)$ & $0$ & $0$ & $0$ \\ 
$(E,F,E)$ & $(F,E,E)$ & $3E - 3\gamma E = 0$ & $E - 2\gamma E + \gamma^{-1}E$ & $\gamma E - 2E + \gamma^{-1}E$ \\ 
$(H,H,E)$ & $(H,H,E)$ & $0$ & $\gamma E - E$ & $\gamma E - E$ \\
$(F,F,E)$ & $(F,F,E)$ & $0$ & $F - \gamma F$ & $F - \gamma F$ \\ 
$(E,E,H)$ & $(E,E,H)$ & $0$ & $0$ & $0$ \\ 
$(E,H,H)$ & $(H,E,H)$ & $0$ & $0$ & $0$ \\ 
$(H,H,H)$ & $(H,H,H)$ & $0$ & $0$ & $0$ \\ 
$(H,F,H)$ & $(F,H,H)$ & $0$ & $0$ & $0$ \\ 
$(F,F,H)$ & $(F,F,H)$ & $0$ & $0$ & $0$ \\ 
$(E,E,F)$ & $(E,E,F)$ & $0$ & $\gamma E + E$ & $\gamma E + E$ \\
$(E,F,F)$ & $(F,E,F)$ & $-(3\gamma F + 3F) = 0$ & $-\gamma F - 2F - \gamma^{-1}F$ & $2\gamma F - \gamma^{-1}F + F$ \\
$(H,H,F)$ & $(H,H,F)$ & $0$ & $-\gamma F - F$ & $-\gamma F - F$ \\
$(H,F,F)$ & $(F,H,F)$ & $0$ & $0$ & $0$ \\ 
$(F,F,F)$ & $(F,F,F)$ & $0$ & $0$ & $0$ \\ 
$(E,H,F)$ & $(H,E,F)$ & $0$ & $H + \gamma^{-1}H$ & $H + \gamma^{-1}H$ \\ 
$(H,E,E)$ & $(E,H,E)$ & $0$ & $0$ & $0$ \\ 
$(H,E,H)$ & $(E,H,H)$ & $0$ & $0$ & $0$ \\ 
$(F,E,E)$ & $(E,F,E)$ & $3\gamma E - 3E = 0$ & $\gamma E - 2E + \gamma^{-1}E$ & $-2\gamma E + \gamma^{-1}E + E$ \\ 
$(F,E,H)$ & $(E,F,H)$ & $0$ & $(-1 - \gamma^{-1})\gamma H$ & $(-1 - \gamma^{-1})\gamma H$ \\
$(F,E,F)$ & $(E,F,F)$ & $3\gamma F + 3F = 0$ & $2\gamma F - \gamma^{-1}F + F$ & $-\gamma F - 2F - \gamma^{-1}F$ \\ 
$(F,H,H)$ & $(H,F,H)$ & $0$ & $0$ & $0$ \\
$(F,H,F)$ & $(H,F,F)$ & $0$ & $0$ & $0$ \\ \hline
\end{tabular}
\caption{Associator verification for Burde's left-symmetric structure $\mathcal{A}_{2, \gamma}$ on $\mathfrak{sl}_2(\mathbb{K})$. Note that the column $\Delta(X,Y,Z) = (X,Y,Z) - (Y,X,Z)$ confirms the left-symmetric identity, as all entries equal zero in characteristic $3$.}
\label{assoc_table_lss2}
\end{table}

% =====================================================================
% Table for Structure 3 (LSS 3)
% =====================================================================

\begin{table}[H]
\centering
\small
\setlength{\tabcolsep}{4pt}
\renewcommand{\arraystretch}{1.3}
\begin{tabular}{ccccc}
\hline
$(X,Y,Z)$ & $(Y,X,Z)$ & $\Delta(X,Y,Z)$ & $(X,Y,Z)$ & $(Y,X,Z)$ \\ \hline
$(E,F,H)$ & $(F,E,H)$ & $0$ & $0$ & $0$ \\ 
$(H,F,E)$ & $(F,H,E)$ & $-3H = 0$ & $-H$ & $2H$ \\
$(H,E,F)$ & $(E,H,F)$ & $0$ & $0$ & $0$ \\ 
$(F,H,E)$ & $(H,F,E)$ & $3H = 0$ & $2H$ & $-H$ \\ 
$(E,E,E)$ & $(E,E,E)$ & $0$ & $0$ & $0$ \\
$(E,H,E)$ & $(H,E,E)$ & $0$ & $0$ & $0$ \\ 
$(E,F,E)$ & $(F,E,E)$ & $0$ & $-E$ & $-E$ \\
$(H,H,E)$ & $(H,H,E)$ & $0$ & $-2E$ & $-2E$ \\
$(F,F,E)$ & $(F,F,E)$ & $0$ & $-F$ & $-F$ \\
$(E,E,H)$ & $(E,E,H)$ & $0$ & $0$ & $0$ \\ 
$(E,H,H)$ & $(H,E,H)$ & $0$ & $0$ & $0$ \\ 
$(H,H,H)$ & $(H,H,H)$ & $0$ & $0$ & $0$ \\ 
$(H,F,H)$ & $(F,H,H)$ & $0$ & $-E$ & $-E$ \\
$(F,F,H)$ & $(F,F,H)$ & $0$ & $H$ & $H$ \\ 
$(E,E,F)$ & $(E,E,F)$ & $0$ & $0$ & $0$ \\ 
$(E,F,F)$ & $(F,E,F)$ & $0$ & $-E$ & $-E$ \\
$(H,H,F)$ & $(H,H,F)$ & $0$ & $2H$ & $2H$ \\
$(H,F,F)$ & $(F,H,F)$ & $-3H = 0$ & $-E - H$ & $-E + 2H$ \\
$(F,F,F)$ & $(F,F,F)$ & $0$ & $-F + H$ & $-F + H$ \\
$(E,H,F)$ & $(H,E,F)$ & $0$ & $0$ & $0$ \\
$(H,E,E)$ & $(E,H,E)$ & $0$ & $0$ & $0$ \\ 
$(H,E,H)$ & $(E,H,H)$ & $0$ & $0$ & $0$ \\ 
$(F,E,E)$ & $(E,F,E)$ & $0$ & $-E$ & $-E$ \\
$(F,E,H)$ & $(E,F,H)$ & $0$ & $0$ & $0$ \\ 
$(F,E,F)$ & $(E,F,F)$ & $0$ & $-E$ & $-E$ \\
$(F,H,H)$ & $(H,F,H)$ & $0$ & $-E$ & $-E$ \\ 
$(F,H,F)$ & $(H,F,F)$ & $-3H = 0$ & $-E - H$ & $-E + 2H$ \\ \hline
\end{tabular}
\caption{Associator verification for Burde's left-symmetric structure $\mathcal{A}_{3}$ on $\mathfrak{sl}_2(\mathbb{K})$. The column $\Delta(X,Y,Z) = (X,Y,Z) - (Y,X,Z)$ confirms the left-symmetric identity, as all entries equal zero in characteristic $3$.}
\label{assoc_table_lss3}
\end{table}

% =====================================================================
% Table for Structure 4 (LSS 4)
% =====================================================================

\begin{table}[H]
\centering
\small
\setlength{\tabcolsep}{4pt}
\renewcommand{\arraystretch}{1.3}
\begin{tabular}{ccccc}
\hline
$(X,Y,Z)$ & $(Y,X,Z)$ & $\Delta(X,Y,Z)$ & $(X,Y,Z)$ & $(Y,X,Z)$ \\ \hline
$(E,F,H)$ & $(F,E,H)$ & $0$ & $0$ & $0$ \\
$(H,F,E)$ & $(F,H,E)$ & $0$ & $(\xi + 1)H$ & $(\xi + 1)H$ \\ 
$(H,E,F)$ & $(E,H,F)$ & $0$ & $(1 - \xi)H$ & $(1 - \xi)H$ \\ 
$(F,H,E)$ & $(H,F,E)$ & $0$ & $(\xi + 1)H$ & $(\xi + 1)H$ \\ 
$(E,E,E)$ & $(E,E,E)$ & $0$ & $0$ & $0$ \\ 
$(E,H,E)$ & $(H,E,E)$ & $0$ & $0$ & $0$ \\ 
$(E,F,E)$ & $(F,E,E)$ & $(-\xi^2 - 3\xi + 2)E = 0$ & $(1 - 3\xi)E$ & $(\xi^2 - 1)E$ \\ 
$(H,H,E)$ & $(H,H,E)$ & $0$ & $(\xi - 1)E$ & $(\xi - 1)E$ \\ 
$(F,F,E)$ & $(F,F,E)$ & $0$ & $-\xi(\xi + 1)F$ & $-\xi(\xi + 1)F$ \\ 
$(E,E,H)$ & $(E,E,H)$ & $0$ & $0$ & $0$ \\ 
$(E,H,H)$ & $(H,E,H)$ & $0$ & $0$ & $0$ \\ 
$(H,H,H)$ & $(H,H,H)$ & $0$ & $0$ & $0$ \\ 
$(H,F,H)$ & $(F,H,H)$ & $0$ & $0$ & $0$ \\ 
$(F,F,H)$ & $(F,F,H)$ & $0$ & $0$ & $0$ \\ 
$(E,E,F)$ & $(E,E,F)$ & $0$ & $\xi(1 - \xi)E$ & $\xi(1 - \xi)E$ \\
$(E,F,F)$ & $(F,E,F)$ & $(\xi^2 - 3\xi - 2)F = 0$ & $(\xi^2 - 1)F$ & $(3\xi + 1)F$ \\ 
$(H,H,F)$ & $(H,H,F)$ & $0$ & $-(\xi + 1)F$ & $-(\xi + 1)F$ \\ 
$(H,F,F)$ & $(F,H,F)$ & $3E = 0$ & $2E$ & $-E$ \\ 
$(F,F,F)$ & $(F,F,F)$ & $0$ & $-2H$ & $-2H$ \\ 
$(E,H,F)$ & $(H,E,F)$ & $0$ & $(1 - \xi)H$ & $(1 - \xi)H$ \\
$(H,E,E)$ & $(E,H,E)$ & $0$ & $0$ & $0$ \\ 
$(H,E,H)$ & $(E,H,H)$ & $0$ & $0$ & $0$ \\ 
$(F,E,E)$ & $(E,F,E)$ & $(\xi^2 + 3\xi - 2)E = 0$ & $(\xi^2 - 1)E$ & $(1 - 3\xi)E$ \\ 
$(F,E,H)$ & $(E,F,H)$ & $0$ & $0$ & $0$ \\ 
$(F,E,F)$ & $(E,F,F)$ & $(-\xi^2 + 3\xi + 2)F = 0$ & $(3\xi + 1)F$ & $(\xi^2 - 1)F$ \\ 
$(F,H,H)$ & $(H,F,H)$ & $0$ & $0$ & $0$ \\ 
$(F,H,F)$ & $(H,F,F)$ & $-3E = 0$ & $-E$ & $2E$ \\ \hline
\end{tabular}
\caption{Associator verification for Burde's left-symmetric structure $\mathcal{A}_{4}$ on $\mathfrak{sl}_2(\mathbb{K})$, where $\xi^2 = -1$. The column $\Delta(X,Y,Z) = (X,Y,Z) - (Y,X,Z)$ confirms the left-symmetric identity, as all entries equal zero in characteristic $3$ using $\xi^2 = -1$.}
\label{assoc_table_lss4}
\end{table}

\noindent {\bf Acknowledgements.} The author gratefully acknowledges that this work was carried out under the supervision of Professor Sofiane Bouarroudj  and sincerely thanks him for his guidance and valuable discussions throughout the project.

%    Bibliographies can be prepared with BibTeX using amsplain,
%    amsalpha, or (for "historical" overviews) natbib style.
\bibliographystyle{amsplain}

%    Insert the bibliography data here.
\bibliography{references}

\end{document}